\documentclass[plain]{amsart}
\usepackage[toc,page]{appendix}
\usepackage{amssymb,graphicx,epsfig,amsxtra, amsmath}
\usepackage[]{graphics}
\usepackage{amscd} 
\usepackage{xypic}
\usepackage[mathscr]{eucal}
\usepackage{xcolor}
\usepackage{a4}
\usepackage[normalem]{ulem}
\usepackage{paralist}
\usepackage{comment}
\input{xy}
\xyoption{all}
\newtheorem{theorem}{\textbf{Theorem}}[section]
\newtheorem{proposition}[theorem]{\textbf{Proposition}}
\newtheorem{lemma}[theorem]{\textbf{Lemma}}
\newtheorem{corollary}[theorem]{\textbf{Corollary}}

\newtheorem{claim}[theorem]{\textbf{Claim}}
\newtheorem{problem}[theorem]{{Problem}}
\theoremstyle{definition}
\newtheorem{definition}[theorem]{Definition}
\newtheorem{remark}[theorem]{Remark}

\newtheorem{definition-remark}{Definition-Remark}

\def\ag{\`a}

\def\deg{\operatorname{deg}}

\def\geq{\geqslant}
\def\leq{\leqslant}

\begin{document}

\title[On nodal deformations of singular surfaces in $\mathbb P^3$]
{On nodal deformations of singular surfaces in $\mathbb P^3$}
\author
{Ciro Ciliberto}
\address{Dipartimento di Matematica\\
Universit\`a di Roma ``Tor Vergata''\\
 Via della Ricerca Scientifica\\
00133 Roma, Italy.}
\email{cilibert@mat.uniroma2.it}

\author{Concettina Galati }
\address{Dipartimento di Matematica\\
 Universit\ag\, della Calabria\\
via P. Bucci, cubo 31B\\
87036 Arcavacata di Rende (CS), Italy. }
\email{concettina.galati@unical.it }


\subjclass{14B05, 14B07, 14J10, 14J17}

\keywords{Severi varieties,  surfaces, deformations,  singularities, 
nodes,  pinch points, nodal deformations of pinch points}

\date{}

\dedicatory{}

\commby{}


\begin{abstract}  In this paper we study nodal deformations of singular surfaces $S\subset \mathbb P^3$. In particular, we consider the case in which $S$ has an isolated singularity of multiplicity $m$ and the case in which $S$ has only ordinary singularities along a line. \end{abstract}


\maketitle

\tableofcontents

\section{Introduction} 

Let $V^{\mathbb P^3, |\mathcal O_{\mathbb P^3}(n)|}_\delta\subset |\mathcal O_{\mathbb P^3}(n)|$ be the \emph{Severi variety} of surfaces $S$ of degree $n$ with $\delta\geq 1$ ordinary double points (called \emph{nodes}) and no other singularity. Then $V^{\mathbb P^3, |\mathcal O_{\mathbb P^3}(n)|}_\delta$ is a locally closed subvariety of $|\mathcal O_{\mathbb P^3}(n)|\cong \mathbb P^{{{n+3}\choose 3}-1}$. Of course $V^{\mathbb P^3, |\mathcal O_{\mathbb P^3}(n)|}_1$ is irreducible of codimension 1 in $|\mathcal O_{\mathbb P^3}(n)|$ and its closure $\mathbb S_n\subset  |\mathcal O_{\mathbb P^3}(n)|$ is the set of all singular surfaces of degree $n$.  If $S\in \mathbb S_n$ is a singular surface, it makes sense to consider the maximum $\delta_S$ of the integers $\delta$ such that the surface $S$ can be deformed to a $\delta$-nodal surface, namely, the maximum $\delta_S$ of the integers $\delta$ such that the point [S] corresponding to $S$ belongs to the closure of $V^{\mathbb P^3, |\mathcal O_{\mathbb P^3}(n)|}_\delta$. 

In this article we will consider the following:

\begin{problem}\label{prob:main}
Let $S\in \mathbb S_n$ be a degree $n$ singular surface in $\mathbb P^3$. In the closure of which Severi varieties $V^{\mathbb P^3, |\mathcal O_{\mathbb P^3}(n)|}_\delta$ the point [S] corresponding to $S$
is contained? In particular, compute $\delta_S$. 
\end{problem}

Though this question  is  rather basic, it does not seem to have received much attention so far. 

One can ask a similar question for plane curves, and it seems to  have been  considered for the first time  by G. Albanese in \cite{Alb}, where he claimed that an irreducible plane curve of degree $n$ and genus $g$ is contained in the closure of the Severi variety $V^{\mathbb P^2, |\mathcal O_{\mathbb P^2}(n)|}_\delta$, with 
$$\delta={{n-1}\choose 2}-g$$
and $\delta$ is, of course, the maximum integer for which this happens. 
However, Albanese's arguments do not meet the present standard of rigor, and this theorem has been proved in more recent times by Arbarello--Cornalba \cite {AC} and Zariski \cite {Za}. 

In the case of surfaces in $\mathbb P^3$ the problem is much more complicated and we have no idea of what could be a general answer to it. In the present paper, which should be regarded as having an experimental flavor, we focus on two study cases, namely: (a) the case in which $S$ has a single ordinary or quasi--ordinary singularity of multiplicity $m$, cf. Definition \ref{def:ordinary}; (b) the case in which $S$ has ordinary singularities along a line, i.e., at the general point of the line $S$ has normal crossings, and at finitely many points of the line it has \emph{pinch--points} (where the local equation is: $x^2y-z^2=0$). 

In both cases (a) and (b), we explicitly construct nodal deformations of our surface $S$ using a suitable degeneration argument.

Our methods are based on deformation theory of regular components of the Severi variety $V^{\mathbb P^3, |\mathcal O_{\mathbb P^3}(n)|}_\delta$ whose closure may contain the point [S] corresponding to $S$,  as well as regular components of a suitable universal Severi variety over a family of threefolds, us explained further below.   

Recall that an irreducible component $V$ of a Severi variety $V^{\mathbb P^3, |\mathcal O_{\mathbb P^3}(n)|}_\delta$ is said to be \emph{regular} if for the general point $[S]\in V$ one has $h^1(\mathbb P^3, \mathcal O_{\mathbb P^3}(n)\otimes \mathcal I_N)=0$, where $N$ is the reduced scheme of length $\delta$ of the singular points of $S$. This means that the points in $N$ give $\delta$ independent conditions to the surfaces in $|\mathcal O_{\mathbb P^3}(n)|$. If $V$ is regular, then $V$ is reduced and 
$$
\dim (V)= h^0(\mathbb P^3, \mathcal O_{\mathbb P^3}(n))-1-\delta={{n+3}\choose 3}-1-\delta.
$$
Note that if  $[S]$ is in the closure of an irreducible regular component of $V^{\mathbb P^3, |\mathcal O_{\mathbb P^3}(n)|}_\delta$, then it is also contained in the closure of an irreducible regular component of $V^{\mathbb P^3, |\mathcal O_{\mathbb P^3}(n)|}_{\delta'}$ with $\delta'\leq \delta$, because the nodes in a regular component of the Severi variety can be independently smoothed.

Case (a) is considered in Section \ref {sec:ord}. In Corollary \ref {cor:gtp}, (i)--(iii), we study Problem \ref {prob:main} for $3\leq m\leq 5$.  This is the best possible result for $m=3$ and $n=m=4,5$. 
For $3\leq m\leq 5$ and if $S$ is a general reduced degree $n$ surface with an ordinary singularity of multiplicity $m$, Corollary \ref {cor:gtp} also gives the maximum number  $\delta_S$ of integers $\delta$ such that $S$ may be deformed to a $\delta$ nodal surface in a regular component of the Severi variety $V^{\mathbb P^3, |\mathcal O_{\mathbb P^3}(n)|}_\delta$. Precisely, somehow unexpectedly $\delta_S=4$ for $m=3$, while $\delta_S=16$ for $m=4$, and $\delta_S=31$  for $m=5$, satisfying the expectation $\delta_S=\delta_0(m)$ given by \eqref{codimension}.  In Corollary \ref {cor:gtp}, (iv),  we  also provide a partial answer to Problem \ref {prob:main} for any $m$. In particular, we prove that if $S\subset \mathbb P^3$ is surface  of degree $n$ with a unique \emph{general} ordinary singularity of multiplicity $m$, then  the point corresponding to $S$ is in the closure of a regular component of $V^{\mathbb P^3, |\mathcal O_{\mathbb P^3}(n)|}_{{m-1}\choose 2}$. An ordinary singularity of multiplicity $m$ is said to be \emph{general} if its tangent cone is the cone over a general plane curve $C$ of degree $m$. In any case, we show that the answer to Problem \ref {prob:main} in case (a)  definitely depends on the fact that the singularity is ordinary or quasi--ordinary. Compare Proposition \ref{prop:7} with Corollary \ref {cor:gtp}, (i). 

In order to attack case (a) we first need to consider in Section \ref {sec:curvesec} a preliminary, equally interesting problem, i.e., Problem \ref {prob:main2}: given  an irreducible component $V$ of a Severi variety $V^{\mathbb P^3, |\mathcal O_{\mathbb P^3}(n)|}_\delta$, and given  $[S]\in V$  a general point and $C$  a general plane curve section of $S$,  when is it the case that $C$ is a general plane curve of degree $n$? Again we are unable to solve this problem in general, but we prove some partial results, of independent interest (see, e.g., Proposition \ref {prop:good} and its consequences)   that are useful to prove the aforementioned results in case (a), thanks to Theorem \ref{thm:ord}.

Theorem \ref{thm:ord} and Proposition \ref{prop:good}, as well as Theorem \ref{thm:ord1}, relate Problem \ref {prob:main2} to Problem \ref {prob:main} in case (a), providing an effective method for constructing nodal deformations of a general reduced surface with an ordinary multiple point.

Case (b) is treated in Section \ref {sec:line}. Our main result here is Theorem \ref {thm:m3}, that says that if $S$ is a general surface of degree $n\geq 3$ that has only {ordinary singularities} along a line, then $S$ belongs to the closure of a regular component of $V^{\mathbb P^3, |\mathcal O_{\mathbb P^3}(n)|}_{3n-4-\epsilon}$, with  $0\leq \epsilon\leq 1$ having the same parity of $n$. In particular, we prove that we can choose $\ell$, with $3n-4=2\ell+\epsilon$, pinch points of $S$ on the line so that $S$ deforms to a $2\ell$--nodal surface in a regular component  of the corresponding Severi variety, so that each chosen pinch point deforms to two nodes. The integer $3n-4-\epsilon$ is the maximum  for which this happens in the $n$ even case, whereas in the $n$ odd case the maximum could be $3n-4$, instead of $3n-4-\epsilon=3n-5$.  

To prove this result we have to rely on the existence of suitable rational surfaces of degree $n$ with a line $r$ of points of multiplicity $n-2$ and with $3n-4-\epsilon$ nodes off $r$, imposing independent conditions to surfaces of degree $n$ in $\mathbb P^3$ that have points of multiplicity $n-2$ along $r$. This is done in Section \ref {sec:rat}. 

The method of the proof in both cases (a) and (b) is the same, and it is based on the following deformation argument. In both cases we construct a flat family $\mathcal X\longrightarrow \mathbb D$ parametrized by a disk $\mathbb D$, such that for $t\neq 0$, the fibre $\mathcal X_t$ over $t\in \mathbb D$ is isomorphic to $\mathbb P^3$, whereas the central fibre $\mathcal X_0$ is the union of two smooth irreducible components $A\cup \Theta$, intersecting transversally along a surface $E$. The starting surface $S$ for which we consider Problem \ref {prob:main} in the two cases (a) and (b) has a minimal resolution $f: S'\longrightarrow S$, with a curve $C$ mapped by $f$ to the singular locus of $S$. What we  do is to realize $S'$ as a surface $S_A$ in the threefold $A$ in such a way that $S_A\cap E=C$. Then we construct a surface $S_\Theta\subset \Theta$ such that $S_\Theta\cap E=C$ so that $S_0=S_A\cup S_\Theta$ is a Cartier divisor in $\mathcal X_0$. Finally we can do this so that $S_\Theta$ has $\delta$ nodes off $E$ imposing independent conditions to the surfaces in the linear system $|S_0|$ on $\mathcal X_0$. If this is the case then we can apply \cite [Thm. 4.6]{CG}, ensuring that $S_0$ can be deformed to a surface $S_t\subset \mathcal X_t\cong \mathbb P^3$ of degree $n$ with $\delta$ nodes that give independent conditions to surfaces of degree $n$. From this it follows  that $S$  is contained in the closure of a regular component of $V^{\mathbb P^3, |\mathcal O_{\mathbb P^3}(n)|}_\delta$.

We notice that the strength of this method consists in studying  deformations of a surface $S\subset \mathbb P^3$ corresponding to a (presumably) singular point of a Severi variety in terms of deformations of a surface $S_A\cup S_\Theta$ corresponding to a smooth point in  a relative Severi variety.

\medskip

{\bf Aknowledgements:} The authors are members of GNSAGA of INdAM. In particular, the second author acknowledges
funding from the GNSAGA of INdAM and the European Union - NextGenerationEU under the National Recovery and Resilience
Plan (PNRR) - Mission 4 Education and research - Component 2 From research to business - Investment 1.1, Prin 2022
``Geometry of algebraic structures: moduli, invariants, deformations", DD N. 104, 2/2/2022, proposal code 2022BTA242 - CUP
J53D23003720006. 

The authors also thank Fabrizio Catanese, Barbara Fantechi and Rita Pardini for useful discussions on related problems to the subject of this paper. \medskip

\section{Nodal surfaces with general plane curve section}\label{sec:curvesec}

Before considering Problem \ref {prob:main}, in this section we will consider the following  related problem:

\begin{problem}\label{prob:main2}
Let $V$ be an irreducible component of a Severi variety $V^{\mathbb P^3, |\mathcal O_{\mathbb P^3}(n)|}_\delta$. Let $[S]\in V$  be  general  and let $C$ be a general plane curve section of the surface $S$.  When does it occur that $C$ is a general plane curve of degree $n$?
\end{problem}

Let $V$ as above be such that the answer to Problem \ref {prob:main2} is affermative. Then we will say that $V$ is a \emph{good component} of the Severi variety. 

\begin{proposition}\label{prop:stop} Let $V$ be a good component of a Severi variety $V^{\mathbb P^3, |\mathcal O_{\mathbb P^3}(n)|}_\delta$, with $n\geq 2$. Then
\begin{equation}\label{eq:lip}
\dim(V)\geq \frac {n(n+3)}2.
\end{equation}
\end{proposition}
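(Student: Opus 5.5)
We will prove the inequality with a dimension count on an incidence correspondence. Let $\check{\mathbb P}^3$ be the space of planes and $|\mathcal O_{\mathbb P^2}(n)|$ the space of plane curves of degree $n$, of dimension $\frac{n(n+3)}2$. Consider
$$
I=\{([S],H)\in V\times \check{\mathbb P}^3\}
$$
together with the map $\phi$ that sends $([S],H)$ to the plane section $S\cap H$. To compare sections lying in different planes, we first fix a plane $H_0$. We then use the transitivity of $PGL(4)$ on planes: composing with a projectivity taking $H$ to $H_0$, the class of $S\cap H$ modulo $PGL(3)$ lands in the quotient $|\mathcal O_{H_0}(n)|/PGL(3)$.

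The cleaner route is to avoid quotients and use $PGL(4)$-invariance of $V$ directly. The Severi variety is $PGL(4)$-invariant, and $PGL(4)$ is connected, so it preserves each irreducible component. Hence $V$ is $PGL(4)$-invariant. Now fix the plane $H_0$ and look at the restriction map
$$
\rho\colon V\dashrightarrow |\mathcal O_{H_0}(n)|,\qquad [S]\mapsto S\cap H_0 ,
$$
defined on the open set of surfaces not containing $H_0$.

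We claim $\rho$ is dominant. Take general $[S]\in V$ and a general plane $H$. By goodness, $S\cap H$ is a general plane curve. Pick $g\in PGL(4)$ with $g(H)=H_0$. Then $g(S)\in V$ and $g(S)\cap H_0=g(S\cap H)$. Since $g$ restricted to $H$ identifies $|\mathcal O_H(n)|$ with $|\mathcal O_{H_0}(n)|$, the curve $g(S\cap H)$ is a general curve in $H_0$. Therefore the image of $\rho$ contains a general point, so $\rho$ is dominant.

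This step is where care is needed. "General" in the definition of a good component must be read as: the family of plane sections $\{S\cap H\}$, with $[S]$ and $H$ varying, dominates the space of plane curves of degree $n$, after identifying the planes through projectivities. The point is that the $PGL(4)$-action makes this identification legitimate and reduces everything to the single fixed plane $H_0$.

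Dominance of $\rho$ then gives
$$
\dim V\geq \dim |\mathcal O_{H_0}(n)|=\frac{n(n+3)}2,
$$
which is \eqref{eq:lip}.

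The main obstacle is precisely this interpretation and bookkeeping of genericity. Once Problem \ref{prob:main2} is formalised as dominance onto $|\mathcal O_{\mathbb P^2}(n)|$ modulo the choice of plane, the inequality is immediate from the dimension of the target.

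One could sharpen the count by also accounting for the stabiliser of $H_0$ in $PGL(4)$, whose elements preserve $H_0$ but move $S$; this would give a fibre-dimension bound. That refinement is not needed for the inequality as stated. The hypothesis $n\geq 2$ only ensures the Severi variety is nonempty and the setting is meaningful.
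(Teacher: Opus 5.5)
Your proof is correct. It is the paper's dimension count restricted to a single plane.

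The paper lets the plane vary. It takes the incidence $\mathbb I\subset V\times\mathcal H_n$ with the Hilbert scheme of plane curves of degree $n$ in $\mathbb P^3$, and reads goodness as dominance of $\mathbb I\to\mathcal H_n$. It then compares $\dim\mathbb I=\dim V+3$ with $\dim\mathcal H_n=\frac{n(n+3)}2+3$. The three plane parameters cancel, no group action is needed, and the same incidence is reused in Proposition \ref{prop:good}.

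You instead use that the connected group $PGL(4)$ preserves $V$ to move all sections onto one fixed plane $H_0$. Strictly, that transfer is optional: a dense open subset of $\mathcal H_n$ meets the fibre over a general plane in a dense open set, so a general $H_0$ works directly. Still, the invariance buys you two things.

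First, it makes the reading of ``general'' immaterial. The stabiliser of $H_0$ preserves $V$ and surjects onto $PGL(H_0)$, so the image of $\rho$ is $PGL(H_0)$-stable. Generality up to projective equivalence therefore already forces dominance.

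Second, it makes your closing refinement work, and that refinement is worth carrying out. The $4$-dimensional subgroup $K$ fixing $H_0=\{x_3=0\}$ pointwise preserves the fibres of $\rho$. Its tangent directions at $[S]=\{F=0\}$ are $x_3\sum_i u_i\,\partial F/\partial x_i$, i.e.\ $x_3$ times the first polars used in Proposition \ref{prop:sev}. These vanish modulo $F$ only if $H_0\subset S$ or $S$ is a cone, which is impossible for a nodal surface of degree $n\geq 3$. Hence every fibre of $\rho$ has dimension at least $4$, and $\dim V\geq\frac{n(n+3)}2+4$. For a regular component this is exactly $\delta\leq\delta_0(n)$, a bound the paper obtains only for regular good components, via Propositions \ref{prop:good} and \ref{prop:sev}.

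Your opening paragraph, with the unconstrained $I$ and the $PGL(3)$-quotient, can simply be deleted.
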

\begin{proof} Let $\mathcal H_n$ be the Hilbert scheme of plane curves of degree $n$ in $\mathbb P^3$. One has that $\mathcal H_n$ is irreducible and $\dim(\mathcal H_n)= \frac {n(n+3)}2+3$.

Consider the incidence variety 
\begin{equation}\label{eq:prot}
\mathbb I=\{([S],[C])\in V\times \mathcal H_n: C\subset S\}.
\end{equation}
One has that  $\mathbb I$ is irreducible of dimension  $\dim (\mathbb I)=\dim(V)+3$. On the other hand, since $V$ is  a good component,  the projection $\pi:\mathbb I\longrightarrow \mathcal H_n$ is dominant. Hence  we have 
$$\dim(V)+3=\dim(\mathbb I)\geq \mathcal H_n=\frac {n(n+3)}2+3,$$ whence the assertion follows.  \end{proof}

Recall that  the tangent space to the Severi variety $V^{\mathbb P^3, |\mathcal O_{\mathbb P^3}(n)|}_\delta$ at a point $[S]$ can be identified with
$H^0(S, \mathcal O_{ S}(n)\otimes \mathcal I_N)$, where $N$ is the reduced scheme of length $\delta$ supported at the nodes of $S$.   Recall that an irreducible component $V$ of  $V^{\mathbb P^3, |\mathcal O_{\mathbb P^3}(n)|}_\delta$ is said to be \emph{regular} if for the general point  $[S]\in V$  one has $ h^1(S, \mathcal O_{ S}(n)\otimes \mathcal I_N)= h^1(\mathbb P^3, \mathcal O_{\mathbb P^3}(n)\otimes \mathcal I_N)=0$.   This means that the points in $N$ give $\delta$ independent conditions to the surfaces in $|\mathcal O_{\mathbb P^3}(n)|$. If $V$ is regular, then $V$ is reduced and 
$$
\dim (V)= h^0(\mathbb P^3, \mathcal O_{\mathbb P^3}(n))-1-\delta={{n+3}\choose 3}-1-\delta.
$$

We will say that $V$ is \emph{very regular} if for the general  $[S]\in V$ one has $h^1(\mathbb P^3, \mathcal O_{\mathbb P^3}(n-1)\otimes \mathcal I_N)=0$, with $N$ as above the set of  nodes  of $S$. Of course, if $V$ is very regular, it is also regular. The following proposition is due to Severi in  \cite [\S 1]{Sev}:

\begin{proposition}  \label{prop:sev} Let $V$ be a very regular irreducible component $V$ of  $V^{\mathbb P^3, |\mathcal O_{\mathbb P^3}(n)|}_\delta$,
with $n\geq 3$.  Then 
\begin{equation}\label{eq:lim}
\delta \leq {{n+2}\choose 3}-4=:\delta_0(n).
\end{equation}
\end{proposition}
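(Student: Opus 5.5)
The plan is to compare the conditions imposed by the nodes $N$ on forms of degree $n-1$ with an explicit supply of degree $n-1$ forms vanishing on $N$, namely the partial derivatives of an equation of $S$.

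Let $[S]\in V$ be general, with equation $F=0$, $\deg F=n$, and let $N$ be its set of $\delta$ nodes. Every singular point of $S$ is a common zero of the four partial derivatives $F_0,\dots,F_3$, where $F_i=\partial F/\partial x_i$. Each $F_i$ is a form of degree $n-1$. Hence
$$F_0,\dots,F_3\in H^0(\mathbb P^3,\mathcal O_{\mathbb P^3}(n-1)\otimes\mathcal I_N).$$
Very regularity says that the points of $N$ impose $\delta$ independent conditions on $|\mathcal O_{\mathbb P^3}(n-1)|$. Therefore
$$h^0(\mathbb P^3,\mathcal O_{\mathbb P^3}(n-1)\otimes\mathcal I_N)={{n+2}\choose 3}-\delta .$$
So it suffices to show that $F_0,\dots,F_3$ are linearly independent. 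Then the left-hand side is at least $4$, which gives $\delta\leq {{n+2}\choose 3}-4$.

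The only real step is this linear independence. Suppose $\sum a_iF_i=0$ with $(a_0,\dots,a_3)\neq 0$. After a linear change of coordinates we may assume the relation is $F_3=0$. Then $F$ does not involve $x_3$, so $S$ is a cone with vertex $p=[0:0:0:1]$ over a plane curve of degree $n$. All partials of $F$ of order less than $n$ vanish at $p$, so $p$ has multiplicity $n\geq 3$ on $S$. This contradicts the hypothesis that $S$ has only ordinary double points, and the contradiction is exactly where $n\geq 3$ is used.

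A small point to check is that $S$ is reduced, so that $F$ is a genuine equation and the singular locus is cut out by the $F_i$; this holds because $S$ has only isolated singularities. By Euler's formula $nF=\sum x_iF_i$, so the singular locus is exactly $\{F_0=\dots=F_3=0\}$, although we only need the inclusion $N\subseteq\{F_0=\dots=F_3=0\}$. No further obstacle is expected.
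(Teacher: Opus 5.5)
Your proof is correct and is essentially the paper's argument: it too bounds $h^0(\mathbb P^3,\mathcal O_{\mathbb P^3}(n-1)\otimes\mathcal I_N)={{n+2}\choose 3}-\delta$ from below by $4$, using the first polars $F_0,\dots,F_3$, which are linearly independent because $S$ is not a cone. You also spell out what the paper leaves implicit: a linear relation among the partials would make $S$ a cone whose vertex has multiplicity $n\geq 3$, which is impossible for a nodal surface.
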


\begin{proof} Let  $[S]\in V$  be general  and let $N$ be its set of double points. Under the hypothesis, one has
$$
h^0(\mathbb P^3, \mathcal O_{\mathbb P^3}(n-1)\otimes \mathcal I_N)=
h^0(\mathbb P^3, \mathcal O_{\mathbb P^3}(n-1))-\delta={{n+2}\choose 3}-\delta.
$$
On the other hand, $h^0(\mathbb P^3, \mathcal O_{\mathbb P^3}(n-1)\otimes \mathcal I_N)\geq 4$, because $H^0(\mathbb P^3, \mathcal O_{\mathbb P^3}(n-1)\otimes \mathcal I_N)$ contains the first polars of $S$, which are linearly independent because $S$ is not a cone. The assertion follows. \end{proof}

\begin{proposition}\label{prop:good} Let $V$ be a  regular irreducible component of  $V^{\mathbb P^3, |\mathcal O_{\mathbb P^3}(n)|}_\delta$. Then $V$ is  good if and only if $V$ is very regular.
\end{proposition}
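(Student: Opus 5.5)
We study the differential of the restriction map. Fix a general $[S]\in V$ with node set $N$, and a general plane $H$; by generality, $N\cap H=\emptyset$. Put $C=S\cap H$. Consider the rational map
$$\rho\colon V\dashrightarrow |\mathcal O_H(n)|\cong\mathbb P^{\frac{n(n+3)}2},\qquad [S']\mapsto [S'\cap H].$$
This map depends only on the fixed plane $H$. So $V$ is good if and only if $\rho$ is dominant, because planes can be moved by the transitive action of $PGL(4)$ while $V$ is invariant. In characteristic zero, dominance is equivalent to surjectivity of the differential $d\rho$ at the general point $[S]$.

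Since $V$ is regular, $V$ is smooth at $[S]$. Its tangent space is $H^0(\mathbb P^3,\mathcal O_{\mathbb P^3}(n)\otimes\mathcal I_N)/\langle F\rangle$, where $F$ is the equation of $S$. The differential $d\rho$ is induced by restricting polynomials to $H$:
$$r\colon H^0(\mathcal O_{\mathbb P^3}(n)\otimes\mathcal I_N)\longrightarrow H^0(\mathcal O_H(n)),$$
taken modulo $F|_H$. Thus $d\rho$ is surjective if and only if $r$ is surjective.

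To analyse $r$, we use the exact sequence
$$0\to \mathcal O_{\mathbb P^3}(n-1)\otimes\mathcal I_N\xrightarrow{\cdot h}\mathcal O_{\mathbb P^3}(n)\otimes\mathcal I_N\to \mathcal O_H(n)\to 0,$$
where $h$ is the equation of $H$. Exactness on the right uses $N\cap H=\emptyset$, and exactness on the left uses that $h$ does not vanish at $N$. Taking cohomology gives
$$H^0(\mathcal O_{\mathbb P^3}(n)\otimes\mathcal I_N)\xrightarrow{r} H^0(\mathcal O_H(n))\to H^1(\mathcal O_{\mathbb P^3}(n-1)\otimes\mathcal I_N)\to H^1(\mathcal O_{\mathbb P^3}(n)\otimes\mathcal I_N).$$
The last group vanishes by regularity. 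Hence $\operatorname{coker}(r)\cong H^1(\mathbb P^3,\mathcal O_{\mathbb P^3}(n-1)\otimes\mathcal I_N)$. Consequently $r$ is surjective exactly when $V$ is very regular.

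The step needing the most care is the equivalence between goodness and dominance of $\rho$ for a fixed plane. If $V$ is good, the projection $\pi\colon\mathbb I\to\mathcal H_n$ of Proposition \ref{prop:stop} is dominant. Restricting $\pi$ to the fibre of $\mathcal H_n$ over $H$ shows, by generic flatness and the $PGL(4)$-equivariance, that $\rho$ is dominant. Conversely, dominance of $\rho$ for a general $H$ gives dominance of $\pi$.

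We also check that generic smoothness applies. $V$ is reduced by regularity, so it is smooth at its general point. In characteristic zero, a dominant map has surjective differential at a general point, and surjectivity of the differential at a smooth point implies dominance. This gives both directions: very regular $\Rightarrow$ $r$ surjective $\Rightarrow$ $\rho$ dominant $\Rightarrow$ good; and good $\Rightarrow$ $d\rho$ surjective at the general point $\Rightarrow$ $H^1(\mathcal O_{\mathbb P^3}(n-1)\otimes\mathcal I_N)=0$.
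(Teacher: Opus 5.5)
Your proof is correct and is essentially the paper's argument. Both reduce goodness to surjectivity of the differential of the plane-section map at a general point and control it through multiplication by the equation $h$ of the plane. The paper computes the kernel, namely the tangent space $H^0(\mathbb P^3,\mathcal O_{\mathbb P^3}(n-1)\otimes\mathcal I_N)$ to the fibre of $\pi\colon\mathbb I\to\mathcal H_n$, and then does a dimension count. You instead fix the plane and read off the cokernel as $H^1(\mathbb P^3,\mathcal O_{\mathbb P^3}(n-1)\otimes\mathcal I_N)$ from the long exact sequence, which also makes explicit the generic-smoothness step and the passage between a fixed plane and the incidence variety that the paper leaves implicit.
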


\begin{proof}  This result is trivially verified for $n=1$. So we may assume $n\geq 2$.

Consider again the incidence variety $\mathbb I$ as in \eqref {eq:prot} and the projection $\pi: \mathbb I\longrightarrow \mathcal H_n$. Let $([S],[C])\in \mathbb I$ be a general point, and let $N$ be, as usual, the set of $\delta$ nodes of $S$. The tangent space to the fibre of $\pi$ passing through $([S],[C])$ at $([S],[C])$ is 
$$
H^0(S,   \mathcal O_{S}(n)\otimes \mathcal I_N\otimes \mathcal I_C)=H^0(S,   \mathcal O_{S}(n-1)\otimes \mathcal I_N)=H^0(\mathbb P^3,   \mathcal O_{\mathbb P^3}(n-1)\otimes \mathcal I_N).
$$
Hence, the image of $\pi: \mathbb I\longrightarrow \mathcal H_n$ has dimension
\begin{eqnarray*}
&\dim({\rm Im}(\pi))=\dim(\mathbb I)-h^0(S, \mathcal O_{S}(n-1)\otimes \mathcal I_N)
=\\
 &=\dim(V)+3-h^0(S, \mathcal O_{S}(n-1)\otimes \mathcal I_N)=\\
&= {{n+3}\choose 3}-\delta+2-h^0(S, \mathcal O_{S}(n-1)\otimes \mathcal I_N).
\end{eqnarray*}
Now $V$ is good if and only if 
$$
\frac {n(n+3)}2 +3=\dim(\mathcal H_n)=\dim({\rm Im}(\pi))={{n+3}\choose 3}-\delta+2-h^0(S, \mathcal O_{S}(n-1)\otimes \mathcal I_N)
$$
i.e., if and only if
\begin{eqnarray*}
&h^0(\mathbb P^3, \mathcal O_{\mathbb P^3}(n-1)\otimes \mathcal I_N)=h^0(S, \mathcal O_{S}(n-1)\otimes \mathcal I_N)=\\
&={{n+3}\choose 3}-\delta-1-\frac {n(n+3)}2={{n+2}\choose 3}-\delta
\end{eqnarray*}
hence if and only if $N$ imposes independent conditions to $|\mathcal O_{\mathbb P^3}(n-1)|$, so if and only if $V$ is very regular. 
 \end{proof}

 \begin{remark}\label{rm:regularity} The regularity hypothesis in 
 Proposition \ref{prop:good} is verified for every irreducible component of $V^{\mathbb P^3, |\mathcal O_{\mathbb P^3}(n)|}_\delta$ if either $n\leq 7$ or $n\geq 8$ and $\delta\leq 4n-5$ (see  \cite[Thm. 1.1]{Kl}).
 \end{remark}

In view of the applications of the above results, we record the following  proposition.

\begin{proposition}\label{prop:goodab}
Every non-empty irreducible component of $V^{\mathbb P^3, |\mathcal O_{\mathbb P^3}(n)|}_\delta$ is very regular, hence good, if $n\leq 5$.
\end{proposition}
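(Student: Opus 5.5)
Since every component is regular for $n\leq 7$ by Remark \ref{rm:regularity}, Proposition \ref{prop:good} reduces goodness to very regularity. So the plan is to show that for $n\leq 5$, and for a general $[S]$ in any non-empty component of $V^{\mathbb P^3, |\mathcal O_{\mathbb P^3}(n)|}_\delta$, the node set $N$ imposes independent conditions on $|\mathcal O_{\mathbb P^3}(n-1)|$. I will go case by case in $n$, using the classical bounds on the number of nodes: $\delta\leq 1$ for $n=2$, $\delta\leq 4$ for $n=3$, $\delta\leq 16$ for $n=4$, and $\delta\leq 31$ for $n=5$ (Beauville's bound for quintics).

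For $n=2$ the only singular surfaces are quadric cones, and one point imposes one condition on planes. For $n=3$ we need $\delta\leq 4$ points to impose independent conditions on quadrics. Since $S$ is an irreducible cubic with isolated singularities, no three nodes are collinear: the line through them would meet $S$ with multiplicity $\geq 6$, hence lie in $S$, and a surface singular at three points of a line contained in it has, by a local computation, non-isolated singularities or is reducible. At most $4$ points with no three collinear impose independent conditions on quadrics, since for each point we can choose a product of two planes through the others avoiding it.

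For $n=4$ and $n=5$ the approach is the following. If $h^1(\mathcal O_{\mathbb P^3}(n-1)\otimes\mathcal I_N)\neq 0$, I take a minimal subset $N'\subseteq N$ that fails to impose independent conditions on surfaces of degree $n-1\in\{3,4\}$. I then use the classical description of such special configurations, in the style of Cayley--Bacharach: for small numbers of points relative to the degree, failure forces many points on a line (more than $n$ collinear points), many on a conic or plane curve of low degree, or a complete intersection configuration such as $8$ points on a twisted cubic or a $(2,2,2)$ complete intersection for cubics. Each case must then be ruled out for nodes. Collinear nodes beyond $n/2$ force the line into the singular locus. Many nodes on a plane conic or plane curve force the plane section to be a non-reduced curve of degree $n$ with too many double points, which bounds the number of nodes in any plane ($\leq 4$ in a plane for quartics, since the plane section is a quartic curve with at most $6$ double points, and so on). For the larger $\delta$, for instance the $16$-nodal Kummer quartics and the $31$-nodal quintics, I would instead rely on the known fact that these components are regular in the stronger sense. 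Concretely, I would compute $h^0(\mathcal O(n-1)\otimes \mathcal I_N)$ directly for a Kummer surface, using the $(16_6)$ configuration and the $16$ tropes, and for the Togliatti or Beauville $31$-nodal quintics via the even set of nodes and the associated double cover. Then I would use semicontinuity and the irreducibility of the relevant components, namely Kummer quartics and the known irreducibility of the $31$-nodal quintic family, to conclude.

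The main obstacle is the top of the range, $n=5$ with $\delta$ close to $31$, and similarly $n=4$ with $\delta$ near $16$. There the number of nodes is close to $\binom{n+2}3$ ($20$ for cubics, $35$ for quartics), so genericity arguments are weak, and we need both a classification of components and an explicit cohomology computation. I would first try to derive independence from the even-set structure: a double cover ramified at an even set gives a relation showing that $h^0(\mathcal O(n-1)\otimes\mathcal I_N)$ equals the expected value $\binom{n+2}3-\delta$, at least $4$ by Proposition \ref{prop:sev}. Failing that, I would fall back on explicit computation on representatives of each component.
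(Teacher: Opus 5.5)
Your cases $n\le 3$ are fine. The cases $n=4,5$ carry all the content of the statement, and there what you give is a programme rather than a proof; as stated, the programme does not work.

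\textbf{The classification step.} Classification results for minimal sets failing to impose independent conditions on surfaces of degree $d$ exist only for sets that are small relative to $d$. For at most $2d+1$ points, failure means $d+2$ collinear points; a little beyond that range, conics enter. You need up to $16$ points on cubics and $31$ points on quartics. In that range the special configurations include $10$ points on a plane cubic, $11$ points on a twisted cubic, the $12$ points of a complete intersection of type $(2,2,3)$, $13$ points on an elliptic quartic curve, and far more for quartics. You exclude none of them.

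The configurations you do list are for the wrong degree: $8$ points on a twisted cubic and a $(2,2,2)$ complete intersection fail on quadrics, not on cubics. Two auxiliary claims are also false:
\begin{itemize}
\item More than $n/2$ collinear nodes only force the line into $S$. You need $n$ of them to force $S$ to be singular along the line, and a nodal quartic can have three nodes on a line.
\item A plane can contain six nodes of a quartic: every trope of a Kummer surface does, on a conic.
\end{itemize}

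\textbf{The fallback.} An explicit check on one Kummer or Togliatti surface, plus semicontinuity, only treats the $16$-nodal quartics and the $31$-nodal quintics, and for the latter you simply assume the family is irreducible. It says nothing about the components of $V^{\mathbb P^3, |\mathcal O_{\mathbb P^3}(n)|}_\delta$ for smaller $\delta$. For that you would have to show that each such component has in its closure a maximally nodal surface onto whose nodes its $\delta$ nodes specialize; you do not do this, and it is not automatic.

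Invoking ``the known fact that these components are regular in the stronger sense'' just restates what is to be proved. The even-set/double-cover structure is what Beauville uses to bound $\delta$; it does not give the vanishing of $h^1(\mathbb P^3,\mathcal O_{\mathbb P^3}(n-1)\otimes\mathcal I_N)$.

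\textbf{What is missing.} You need a uniform independence theorem for nodes. The paper gets very regularity in one step from Dimca's result \cite[Cor.~1.3]{D}: the nodes of a nodal surface of degree $n\le 5$ impose independent conditions on surfaces of degree $n-1$. It then concludes by Proposition \ref{prop:good}.
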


\begin{proof} The result follows by Proposition \ref{prop:good} and by   \cite[Cor. 1.3]{D}\end{proof}

\begin{remark}\label{rem:5ic} (a) We recall that the maximal number of nodes of a reduced surface of degree $n\leq 5$ in $\mathbb P^3$  is $4$ for a cubic surface, $16$ for a quartic surface, and $31$ for a quintic surface (see, as a general reference for nodal surfaces in $\mathbb P^3$, Lab's thesis \cite {Labs}). Four nodal cubic surfaces are known as \emph{Cayley cubics},  $16$-nodal quartic surfaces are known as \emph{Kummer surfaces} and $31$-nodal quintic surfaces are known as \emph{Togliatti surfaces}. The Severi varieties $V^{\mathbb P^3, |\mathcal O_{\mathbb P^3}(3)|}_4$ and  $V^{\mathbb P^3, |\mathcal O_{\mathbb P^3}(4)|}_{16}$ are also known to be irreducible.

Notice that for the variety of Kummer surfaces  $V^{\mathbb P^3, |\mathcal O_{\mathbb P^3}(4)|}_{16}$ and for the variety of Togliatti surfaces $V^{\mathbb P^3, |\mathcal O_{\mathbb P^3}(5)|}_{31}$ the equality holds in \eqref {eq:lim}. \smallskip

 (b)  The maximum number of nodes of a sextic surface in $\mathbb P^3$ is 65 (see, e.g., \cite {Labs}). A component of the Severi variety $V^{\mathbb P^3, |\mathcal O_{\mathbb P^3}(6)|}_{65}$  is regular by Remark 2.5, but is not very regular, because for it the bound in \eqref {eq:lim} fails. It is a problem to see if there is a very regular, hence good, component of $V^{\mathbb P^3, |\mathcal O_{\mathbb P^3}(6)|}_{\delta}$ for $\delta\leq \delta_0(6)=52$. More generally it is not known which is the maximal value of $\delta$ such that there exists a very regular, hence good,  component of  $V^{\mathbb P^3, |\mathcal O_{\mathbb P^3}(n)|}_{\delta}$, with $n\geq 6$. 
 In particular, do there exist, for $n\geq 4$, very regular components of  $V^{\mathbb P^3, |\mathcal O_{\mathbb P^3}(n)|}_{\delta_0(n)}$, where $\delta_0(n)$ is defined in \eqref{eq:lim}?
  \smallskip

 (c)  In \cite [Thm. 4.6]{CG} we proved that  there is an irreducible, regular component of 
 $V^{\mathbb P^3, |\mathcal O_{\mathbb P^3}(n)|}_{\delta}$, for any $\delta \leq {{n-1}\choose 2}$. It is easy to check that in fact we prove there more than stated, since we prove that there is an irreducible, very regular component of 
 $V^{\mathbb P^3, |\mathcal O_{\mathbb P^3}(n)|}_{\delta}$, for any $\delta \leq {{n-1}\choose 2}$. 
\end{remark}

\section{Deformations of surfaces with an ordinary singularity}\label{sec:ord}

In this section we consider Problem \ref {prob:main} for a  reduced surface $S$ of degree $n$ in $\mathbb P^3$  with a unique \emph{ordinary singularity} in  $p$ of multiplicity $m$.  Recall that $S$ has a point of multiplicity $m$ at $p$ if, given  the minimal desingularization $f: S'\longrightarrow S$ of $S$ at $p$, then,  the exceptional divisor of $S'$ over $p$ is a plane curve $C$ of degree $m$.

\begin{definition}\label{def:ordinary}
With the above notation, if the curve  $C$ is  smooth and $S'$ can be obtained from $S$ by blowing up only once at $p$, then one says that the singularity of $S$ at $p$ is ordinary.  If moreover $C$  is a \emph{general} plane curve of degree $m$ we will say that $p$ is a  \emph{general ordinary multiple} point. If $C$ is reduced and nodal and $S'$ can be obtained from $S$ by blowing up only once at $p$, we will say that $p$ is a \emph{quasi--ordinary multiple} point. 
\end{definition}

We note that the family $\mathcal T_{n,m}$ of surfaces of degree $n$ in $\mathbb P^3$ with a point of multiplicity $m$ is irreducible of dimension

\begin{equation}\label{codimension}
t_{n,m}={{n+3}\choose 3}-{{m+2}\choose 3}+2 =\dim(|\mathcal O_{\mathbb P^3}(n)|)-\delta_0(m)-1
\end{equation}
(where $\delta_0(m)$ is defined in \eqref{eq:lim})  and the general point of $\mathcal T_{n,m}$ corresponds to a surface with a general ordinary singularity  of multiplicity $m$.  

In order to attack Problem \ref {prob:main} in the case under consideration,  we will use a deformation argument that we are going to describe now. 

\subsection{The deformation}\label{ssec:def}  Let $\mathcal X'=\mathbb P^3\times \mathbb D\longrightarrow \mathbb D$ be a trivial family parametrized by the disc $\mathbb D=\{t\in\mathbb C|\,|t|<1\}$,
with fibres $\mathcal X'_t\simeq \mathbb P^3$ over $t\in\mathbb D$.

Next we blow-up the point $p\in \mathcal X'_0$.
Denote by $\mathcal  X\longrightarrow \mathcal X'$  the blowing-up of $\mathcal X'$ at $p\in \mathcal X'_0$. Then $\mathcal X$ has fibre $\mathcal X_t\simeq\mathbb P^3$ if $t\neq 0$ and central fibre is
 $$\mathcal X_0=A\cup\Theta,$$
 where $A$ is the blowing-up  $p: A\longrightarrow \mathbb P^3$ of $\mathbb  P^3$ at $p$ and $\Theta\cong \mathbb P^3$ is the exceptional divisor, intersecting transversally
 $A$ along the plane $E=A\cap \Theta$.

Let $S\subset\mathcal X'_0\cong \mathbb P^3$ be a degree $n$ surface with an ordinary or quasi--ordinary point of multiplicity $m$ at $p$ and at most nodes as further singularities. We  denote by $H$ the Cartier divisor on $\mathcal X$ which is pull-back of a general plane divisor of $\mathbb P^3$ with respect to the projection morphism $\mathcal X\longrightarrow \mathbb P^3$.  The strict transform $S_A$ of  $S\subset\mathcal X'_0$ on $\mathcal X$ is nothing but the minimal resolution $S'$ of $S$ at $p$, it is contained in the component $A$ of $\mathcal X_0$ and it belongs to the linear system $|\mathcal O_A(nH-mE)|$. Under our hypotheses on $S$, the intersection $C=S_A\cap E$ is a reduced plane curve of degree $m$ in $E\cong \mathbb P^2$ with at most nodes.

Let now $S_0:=S_A\cup S_\Theta\in |\mathcal O_{\mathcal X_0} (nH-m\Theta)|$, be  a Cartier divisor with 
$S_\Theta\subset \Theta$. Then,  observing that $\Theta\cong \mathbb P^3$ and $\Theta|_\Theta:=H_\Theta$ is a plane, we have that  $S_\Theta\in |mH_\Theta|$ is a surface of degree $m$ in $\Theta\cong \mathbb P^3$ intersecting the plane $E$ along the curve $C$.  We require that $S_\Theta$  is smooth along $C$, so that  $S_\Theta$  has at most isolated singularities off $E$. 

Note that $\mathcal O_{\mathcal X}(nH-m\Theta)$ is a line bundle on $\mathcal X$ such that its restriction to $\mathcal X_t$, for $t\neq 0$, is $\mathcal O_{\mathcal X_t}(nH)\cong \mathcal O_{\mathbb P^3}(n)$.

\subsection{Nodal deformations}

In what follows we will assume that $S_\Theta$ is nodal and we will 
be interested in nodal deformations of a surface of the type of $S_0$ in $\mathcal X$. 

\begin{proposition}\label{prop:frt} In the above set--up, suppose that $C$ has $\delta_C$ nodes, that $S_A$ has $\delta_A$ nodes and that $S_\Theta$  has $\delta_\Theta$ nodes all off $E$. Let us denote by $N$ the set of these $\delta:=\delta_A+\delta_C+\delta_\Theta$ nodes. Suppose  that 
$h^1(\mathcal X_0, \mathcal O_{\mathcal X_0} (nH-m\Theta)\otimes I_N)=0$, 
that is equivalent to say that the points in $N$ give independent conditions to surfaces in $|\mathcal O_{\mathcal X_0} (nH-m\Theta)|$.  Then $S_0$ can be deformed to a $\delta$--nodal surface $S_t\subset \mathcal X_t\cong \mathbb P^3$ of degree $n$, for $t\neq 0$ and the point corresponding to $S$ is contained in  the closure of a regular component of the Severi variety $V^{\mathbb P^3, |\mathcal O_{\mathbb P^3}(n)|}_\delta$.
\end{proposition}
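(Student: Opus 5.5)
The plan is to reduce the statement to the general deformation result \cite[Thm. 4.6]{CG}, as anticipated in the Introduction, by checking its hypotheses for the family $\mathcal X\to\mathbb D$ and the Cartier divisor $S_0=S_A\cup S_\Theta\in|\mathcal O_{\mathcal X_0}(nH-m\Theta)|$. First I will verify the local structure of $S_0$ along the double surface $E$. Since $S_\Theta$ is smooth along $C$, and $S_A$ is the blow-up resolution (ordinary or quasi-ordinary), $S_0$ is locally of the form $xy=0$ in $\mathcal X_0$ near smooth points of $C$. At the $\delta_C$ nodes of $C$ the local picture is: $\mathcal X$ has local equation $xy=t$, $E=\{x=y=0\}$, and $S_0$ is cut out by a function whose restriction to $E$ has a node. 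Such a point deforms to a node of $S_t$ for $t\ne 0$ (locally $xy=t$, $g(x,y,u,v)$ with $g|_{E}=u^2-v^2$ gives a surface in the smooth threefold $\mathcal X_t$ with an $A_1$ point). This is exactly the situation in \cite{CG}, where singular points of $C=S_0\cap E$ are treated as nodes ``in the limit''.

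Next I will use the vanishing $h^1(\mathcal X_0,\mathcal O_{\mathcal X_0}(nH-m\Theta)\otimes \mathcal I_N)=0$. The key step is to show that this vanishing makes the relative Severi variety of $\delta$-nodal divisors in $|\mathcal O_{\mathcal X}(nH-m\Theta)|$ smooth of the expected relative dimension at $[S_0]$ and flat (dominant) over $\mathbb D$. Concretely, the locus of divisors in the linear system on $\mathcal X$ keeping each of the $\delta$ singular points (the $\delta_A+\delta_\Theta$ nodes off $E$ and the $\delta_C$ points on $C$) is defined, locally at each point, by one equation; the independence of the $\delta$ conditions on the central fibre implies these $\delta$ equations are independent in the total family. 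By upper semicontinuity of $h^1$ they stay independent on the general fibre, so the codimension-$\delta$ locus surjects onto $\mathbb D$, and its general member $S_t\subset\mathcal X_t\cong\mathbb P^3$ is a degree-$n$ surface with exactly $\delta$ nodes near the points of $N$. Moreover, $h^1(\mathbb P^3,\mathcal O(n)\otimes\mathcal I_{N_t})=0$ for $t\ne0$, again by semicontinuity, giving regularity.

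Finally, identifying $\mathcal X_t$ with $\mathbb P^3$ via the blow-down, the limit of $S_t$ in $\mathcal X'_0$ is the image of $S_0$ under the contraction of $\Theta$, namely $S$ (the component $S_\Theta$ is contracted to $p$). Hence $[S]$ lies in the closure of the regular component containing $[S_t]$. The main obstacle is the middle step at the $\delta_C$ nodes on the double locus: I must check that in the local equation of $\mathcal X$ the condition ``$S_t$ is singular near the point'' is a single smooth condition cut transversally, with tangent condition on $\mathcal X_0$ equal to vanishing at that point. I would handle this by the explicit local computation above, or simply quote \cite[Thm. 4.6]{CG}, which is stated for exactly this setting.
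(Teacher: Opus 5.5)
Your top-level plan is the paper's: its proof is a one-line appeal to \cite[Thm. 3.13]{CG} (Thm.~3.13, not Thm.~4.6, is the result cited here). The self-contained justification you sketch, however, is wrong at the $\delta_C$ nodes of $C$. Take local coordinates $(x,y,u,v)$ on $\mathcal X$ near such a node $p$, with $t=xy$, $A=\{y=0\}$ and $\Theta=\{x=0\}$. Since $S_A$ and $S_\Theta$ are smooth at $p$ while $C$ is nodal there, the local equation is $g=ax+by+u^2-v^2+\cdots$ with $ab\neq 0$; both components are tangent to $E$ at $p$.

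For a fixed such $g$ the fibres are \emph{smooth} near $p$ when $t\neq 0$. For example, $g=x+y+u^2-v^2$ gives $S_t\cong\{x^2+x(u^2-v^2)+t=0\}$, which is smooth for $t\neq0$. If you add a constant $c$ to $g$, a node appears near $p$ exactly when $t=c^2/4ab$. In general, let $\mathcal P\to\mathbb D$ be the relative linear system with fibre coordinates $s$, and let $\gamma(s)$ be the critical value of $g_s|_E$ near $p$. The locus of divisors singular near $p$ is $\{t=w(s)\gamma(s)^2\}$ with $w(0)\neq0$. This is a smooth hypersurface, but it is \emph{tangent} to $\mathcal P_0$: it meets $\mathcal P_0$ in the double hypersurface $\gamma^2=0$. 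Three of your claims therefore fail:
\begin{itemize}
\item A node of $C$ does not deform to a node along any divisor of $\mathcal X$ itself. You need $\gamma\sim\sqrt t$, which requires a base change $t=\tau^2$.
\item The $\delta_C$ conditions are not transversal to the central fibre. At $[S_0]$ their differentials are all multiples of $dt$.
\item For $\delta_C\geq 2$ the relative Severi variety is not smooth at $[S_0]$, so the step you called the main obstacle would fail as you formulated it.
\end{itemize}

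What survives is the following. The derivative of a critical value is the derivative of the function at the critical point, so $d\gamma|_{\mathcal P_0}$ is evaluation at $p$. Likewise, for a node $q_j$ off $E$ with critical value $\beta_j(s,t)$, $d\beta_j|_{\mathcal P_0}$ is evaluation at $q_j$. Hence the hypothesis $h^1(\mathcal X_0,\mathcal O_{\mathcal X_0}(nH-m\Theta)\otimes I_N)=0$ says exactly that the $\gamma_i$ and the $\beta_j(s,0)$ can be taken as part of a coordinate system on the fibre. Projecting to the $s$-coordinates and setting $\tilde\gamma_i=\sqrt{w_i}\gamma_i$, the relative Severi locus becomes $\{\tilde\gamma_1^2=\cdots=\tilde\gamma_{\delta_C}^2,\ \tilde\beta_j=0\}$. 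This is a union of $2^{\delta_C-1}$ smooth branches, each of codimension $\delta$ in $\mathcal P$ and each mapping onto $\mathbb D$ via $t=\tilde\gamma_1^2$.

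Equivalently, by Krull every component through $[S_0]$ has codimension at most $\delta$, while its trace on $\mathcal P_0$ has codimension $\delta+1$, so no component lies in $\mathcal P_0$. This dimension count, not semicontinuity of $h^1$, is what gives dominance over $\mathbb D$.

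The rest of your argument then goes through:
\begin{itemize}
\item The singular point near $p_i$ is a node, because its Hessian is nondegenerate once $\gamma_i\neq0$.
\item $S_t$ is smooth elsewhere, since $S_A$ and $S_\Theta$ are transverse to $E$ at smooth points of $C$.
\item Semicontinuity of $h^1$ gives regularity for $t\neq0$.
\item The pushforward to $\mathcal X'$ has central fibre $S$.
\end{itemize}
This two-sheeted local structure is precisely what the cited result of \cite{CG} handles.
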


\begin{proof} The proposition follows as an immediate application of \cite [Thm. 3.13]{CG}.
\end{proof}

\subsection{Applications} 

\subsubsection{The case of general ordinary multiple point} Our first application is to the case $p$ is a general ordinary multiple point of multiplicity $m$ for $S$. 

\begin{theorem}\label{thm:ord} Let $\delta$ be such that there is a very regular irreducible component $V$ of the Severi variety $V^{\mathbb P^3, |\mathcal O_{\mathbb P^3}(m)|}_\delta$. Then the variety $\mathcal T_{n,m}$  is contained  in the Zariski closure of a regular component of $V^{\mathbb P^3, |\mathcal O_{\mathbb P^3}(n)|}_{\delta}$. In other words, the general degree $n$ surface $S$ with a general ordinary singularity of multiplicity $m$ is the limit of surfaces of degree $n$ with $\delta$ nodes tending to the multiplicity $m$ point. 
\end{theorem}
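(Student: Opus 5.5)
The plan is to apply Proposition \ref{prop:frt} to the degeneration of \S\ref{ssec:def}. We choose $S_A$ to be the strict transform of the given general surface $S\in\mathcal T_{n,m}$, and we choose $S_\Theta$ to be a general member of the very regular component $V$ passing through the curve $C=S_A\cap E$. In this configuration $\delta_A=\delta_C=0$ and $\delta_\Theta=\delta$. The whole proof then reduces to two things: constructing $S_\Theta$ with the required properties, and checking the vanishing $h^1(\mathcal X_0,\mathcal O_{\mathcal X_0}(nH-m\Theta)\otimes \mathcal I_N)=0$. I assume $n\geq m$ throughout, which is implicit since $S$ has a point of multiplicity $m$.

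\emph{Step 1: construction of $S_\Theta$.} Since $S$ is general in $\mathcal T_{n,m}$, its singular point $p$ is a general ordinary $m$-fold point. Hence $C\subset E\cong\mathbb P^2$ is a general smooth plane curve of degree $m$, and $S_A\in|\mathcal O_A(nH-mE)|$ is smooth.
\begin{itemize}
\item By Proposition \ref{prop:good}, $V$ is good. So the projection $\pi:\mathbb I\to\mathcal H_m$ from the incidence variety \eqref{eq:prot} (with $m$ in place of $n$) is dominant.
\item Embed $E$ as a plane in $\Theta\cong\mathbb P^3$, so that $C$ becomes a general point of $\mathcal H_m$. We pick $([S_\Theta],[C])$ general in a component of $\pi^{-1}([C])$ that dominates $\mathcal H_m$. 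Then $[S_\Theta]$ is a general point of $V$.
\item Consequently $S_\Theta$ is a $\delta$-nodal surface of degree $m$ whose node set $N$ satisfies $h^1(\Theta,\mathcal O_\Theta(m-1)\otimes\mathcal I_N)=0$. This holds because very regularity is an open condition, so it is satisfied at a general point of $V$.
\item $E\cap S_\Theta=C$ is smooth. Therefore the nodes lie off $E$ and $S_\Theta$ is smooth along $C$, as required in \S\ref{ssec:def}.
\end{itemize}
Thus $S_0=S_A\cup S_\Theta$ is a Cartier divisor in $|\mathcal O_{\mathcal X_0}(nH-m\Theta)|$.

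\emph{Step 2: the $h^1$ vanishing.} A section of $\mathcal O_{\mathcal X_0}(nH-m\Theta)$ is a pair of sections on $A$ and on $\Theta$ that agree on $E$. Restricting to $A$ gives an exact sequence
$$0\to \mathcal O_\Theta(m-1)\otimes\mathcal I_N\to \mathcal O_{\mathcal X_0}(nH-m\Theta)\otimes\mathcal I_N\to \mathcal O_A(nH-mE)\to 0.$$
Here $N\subset\Theta\setminus E$, and the kernel consists of sections of $\mathcal O_\Theta(m)$ vanishing on $E$. The first term has vanishing $h^1$ precisely by very regularity. For the third term, $h^1(A,\mathcal O_A(nH-mE))=0$, because an $m$-fold point imposes $\binom{m+2}{3}$ independent conditions on surfaces of degree $n\geq m-1$. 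Hence the middle term has vanishing $h^1$. Proposition \ref{prop:frt} then deforms $S_0$ to $\delta$-nodal surfaces of degree $n$ in $\mathcal X_t\cong\mathbb P^3$. Their nodes tend to $p$, and they lie in a regular component of $V^{\mathbb P^3,|\mathcal O_{\mathbb P^3}(n)|}_\delta$ whose closure contains $[S]$. Since $S$ is general in the irreducible variety $\mathcal T_{n,m}$ and closures are closed, all of $\mathcal T_{n,m}$ lies in that closure.

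\emph{Main obstacle.} The delicate point is Step 1. We must make sure that requiring $S_\Theta$ to cut out exactly the prescribed curve $C$ does not force $S_\Theta$ into a special locus of $V$ where very regularity fails. This is exactly why goodness, obtained through Proposition \ref{prop:good}, is needed: the fibre of $\pi$ over a general $C$ must contain points that are general in $V$. To settle it, I would argue via a dominant component of $\mathbb I$ and the openness of the $h^1$-vanishing condition.
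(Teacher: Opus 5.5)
Your proposal is correct and follows the paper's proof: you degenerate as in \S\ref{ssec:def}, use goodness of $V$ (Proposition \ref{prop:good}) to find $S_\Theta$ general in $V$ with plane section $C$, and conclude with Proposition \ref{prop:frt}. The only divergence is in the $h^1$ check. The paper invokes regularity of $V$, i.e.\ independence of the nodes on $|\mathcal O_\Theta(m)|$, which lifts to $|\mathcal O_{\mathcal X_0}(nH-m\Theta)|$ because restriction to $\Theta$ is surjective for $n\geq m$; you instead restrict to $A$ and use very regularity on the kernel $\mathcal O_\Theta(m-1)\otimes\mathcal I_N$. Both work, and your generic-fibre argument makes explicit the genericity of $S_\Theta$ that the paper leaves implicit.
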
 

\begin{proof} Let $[S]\in \mathcal T_{n,m}$ be  a general point. The corresponding surface $S\subset\mathbb P^3$ has a unique multiple point that is a general ordinary singularity of multiplicity $m$. 
If we perform the construction we made in Section \ref {ssec:def},   the curve $C$ cut out by the strict transform $S_A$ of $S$ on $E=A\cap \Theta$ is a general plane curve of degree $m$. Now, if $\delta$ and $V$ are as in the statement,  we can find $S_\Theta\in |mH_{\Theta}|=|\mathcal O_{\mathbb P^3}(m)|$ cutting $C$ on $E$,   with $\delta$ nodes lying in $V$ because $V$ is good by Proposition \ref {prop:good}. Moreover, since $V$ is also regular,  the nodes of $S_\Theta$ impose independent conditions to the surfaces in $|\mathcal O_{\mathbb P^3}(m)|$. Hence the assertion follows by Proposition \ref {prop:frt}. 
\end{proof} 

In order to prove a corollary of this theorem, we need some  preliminary results. 

\begin{proposition}\label{prop:triple} Let $(S,p)$ be a germ of surface singularity in $(\mathbb C^3, \bf 0)$, with $p$ an isolated point of multiplicity 3 with tangent cone $\Gamma$ with vertex $p$ over a smooth curve $C$ of degree 3 in $\mathbb P^2$. Then $(S,p)$ is analitically equivalent to $(\Gamma, p)$.
\end{proposition}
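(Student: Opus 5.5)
We will show that the germ of $S$ at $p$ is right-equivalent to the germ of its tangent cone, by the standard finite-determinacy (Mather--Moser homotopy) argument. Choose local coordinates $x_1,x_2,x_3$ centered at $p$ and write a local equation as $f=f_3+g$. Here $f_3$ is the cubic form defining $\Gamma$, and $g\in\mathfrak m^4$, where $\mathfrak m=(x_1,x_2,x_3)$. It suffices to prove that $f$ and $f_3$ differ by a local analytic change of coordinates.

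\textbf{Step 1 (algebra of $f_3$).} Since $C=\{f_3=0\}\subset\mathbb P^2$ is smooth, the partials $\partial_1f_3,\partial_2f_3,\partial_3f_3$ have no common zero in $\mathbb P^2$. Hence they form a regular sequence of quadrics in $\mathbb C[x_1,x_2,x_3]$. The graded Milnor algebra $\mathbb C[x]/J(f_3)$ therefore has Hilbert series $(1+t)^3$, and it vanishes in degrees $\geq 4$. So $J(f_3)_d$ is the whole space of forms of degree $d$ for every $d\geq 4$. Because $J(f_3)$ is generated in degree $2$, this means every form of degree $\geq 4$ lies in $\mathfrak m^2J(f_3)$. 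In particular
\[
\mathfrak m^4\subset \mathfrak m^2J(f_3)+\mathfrak m^5 .
\]

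\textbf{Step 2 (uniformity along the path).} Set $f_t=f_3+tg$ for $t\in[0,1]$. From $\partial_jf_3=\partial_jf_t-t\,\partial_jg$ and $\mathfrak m^2\partial_jg\subset\mathfrak m^5$, Step 1 gives $\mathfrak m^4\subset\mathfrak m^2J(f_t)+\mathfrak m^5$. Nakayama's lemma in the local ring then yields $\mathfrak m^4\subset\mathfrak m^2J(f_t)$ for each fixed $t$. To make this work in a family, I would apply Nakayama over the ring of germs in $(x,t)$ near $\{0\}\times[t_0]$. Compactness of $[0,1]$ then provides finitely many neighbourhoods covering the interval.

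\textbf{Step 3 (Moser's trick).} By Step 2 we can solve $g=\sum_j\xi_j(x,t)\,\partial_jf_t$ with $\xi_j\in\mathfrak m^2$, depending holomorphically on $x$ and analytically on $t$ in a neighbourhood of $[0,1]$. The time-dependent vector field $-\xi$ vanishes to order $2$ at the origin. Its flow $\Phi_t$ is therefore a family of germs of biholomorphisms fixing $p$, defined for all $t\in[0,1]$. Differentiating gives $\frac{d}{dt}(f_t\circ\Phi_t)=0$, hence $f\circ\Phi_1=f_3$. Thus $(S,p)\cong(\Gamma,p)$, even up to right equivalence.

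The main obstacle is Step 2, namely the uniformity in $t$ of the inclusion $\mathfrak m^4\subset\mathfrak m J(f_t)$. This is exactly where the smoothness of $C$ is used, through the regular-sequence computation of Step 1. As an alternative to the homotopy argument, one can invoke Arnold's normal form theorem for semi-quasihomogeneous functions. It states that $f\sim f_3+\sum c_ke_k$, where the $e_k$ run over basis monomials of the Milnor algebra of degree $>3$. By Step 1 there are no such monomials, which gives the same conclusion.
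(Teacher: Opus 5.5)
Your proof is correct and follows the paper's route. The paper also reduces the statement to the criterion $\mathfrak m^4\subseteq\mathfrak m^2 J(f_3)$ and verifies it from the regular-sequence property of the partials, by counting the image of $\mathbb C[x,y,z]_2\otimes J(f_3)_2\to\mathbb C[x,y,z]_4$ as $18-3=15$ via the Koszul syzygies; this is the same fact as your Hilbert series $(1+t)^3$. The only difference is that the paper simply cites the Greuel--Lossen--Shustin finite determinacy theorem, whereas your Steps 2--3 reprove it by the Mather--Moser homotopy. Note also the slip $\mathfrak m J(f_t)$ for $\mathfrak m^2 J(f_t)$ in your last paragraph.
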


\begin{proof} Suppose that $\Gamma$ has equation $f(x,y,z)=0$, where  $f(x,y,z)=0$ is a homogeneous polynomial of degree 3 that defines $C$ in $\mathbb P^2$. By the \emph{finite determinacy theorem} \cite [Thm. 2.23]{GLS}, $(S,p)$ is analitically isomorphic to $(\Gamma, p)$ if 
\begin{equation}\label{eq:det}
\mathfrak m^4\subseteq \mathfrak m^2 \cdot \Big ( \frac {\partial f}{\partial x}, \frac {\partial f}{\partial y}, \frac {\partial f}{\partial z}\Big )
\end{equation}
where $\mathfrak m$ is the maximal ideal $(x,y,z)$ in $\mathbb C[x,y,z]$. Let us prove that \eqref {eq:det} holds with in fact the equality. 
Indeed, consider the multiplication map
$$
\mu: \mathbb C[x,y,z]_2  \otimes \Big ( \frac {\partial f}{\partial x}, \frac {\partial f}{\partial y}, \frac {\partial f}{\partial z}\Big )\longrightarrow \mathbb C[x,y,z]_4,
$$
where $\mathbb C[x,y,z]_d$ is the vector space of homogeneous polynomials of degree $d$. Since $C$ is smooth, $\Big (\frac {\partial f}{\partial x}, \frac {\partial f}{\partial y}, \frac {\partial f}{\partial z}\Big)$ is a regular sequence, hence 
the kernel of $\mu$ has exactly dimension 3, being generated by the Koszul syzygies of $\frac {\partial f}{\partial x}, \frac {\partial f}{\partial y}, \frac {\partial f}{\partial z}$. Thus 
$$
\dim ({\rm Im}(\mu))=3 \dim (\mathbb C[x,y,z]_2 )-3=15=\dim (\mathbb C[x,y,z]_4)
$$
that implies the assertion.\end{proof}

\begin{lemma}\label{lem:onto}  Let $S$ be a cubic surface in $\mathbb P^3$ with an isolated singularity at  $p$. Then the natural map
$$
H^0(S, N_{S| \mathbb P^3})\longrightarrow H^0(T^1_{S,p})
$$
is surjective. In particular, all local deformations of the singularity $(S,p)$ can be realised by deforming $S$ as a cubic in $\mathbb P^3$.
\end{lemma}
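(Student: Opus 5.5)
We work with the local equation. Let $F$ be the cubic form defining $S$, and put $p=[1:0:0:0]$. Dehomogenizing at $p$ gives the affine germ $g(x,y,z)=F(1,x,y,z)$. Then $T^1_{S,p}\cong \mathbb C\{x,y,z\}/(g,g_x,g_y,g_z)$, the Tjurina algebra. The map $H^0(S,N_{S|\mathbb P^3})\to H^0(T^1_{S,p})$ is induced by $H^0(\mathcal O_{\mathbb P^3}(3))\to H^0(\mathcal O_S(3))=H^0(N_{S|\mathbb P^3})$ followed by restriction to the germ. Concretely, a cubic form $G$ goes to the class of $G(1,x,y,z)$. The restriction $H^0(\mathcal O_{\mathbb P^3}(3))\to H^0(\mathcal O_S(3))$ is surjective, since $H^1(\mathcal O_{\mathbb P^3})=0$. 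So it suffices to show that affine polynomials of degree $\le 3$ in $x,y,z$ span the Tjurina algebra $T=\mathbb C\{x,y,z\}/(g,\partial g)$. Equivalently,
$$\mathfrak m^4\subseteq (g,g_x,g_y,g_z)+\mathfrak m^{k}\quad\text{for all }k,$$
that is, $\mathfrak m^4\subseteq(g,\partial g)$ in the local ring (recall $T$ is finite-dimensional because the singularity is isolated).

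I would split into cases according to the multiplicity of $S$ at $p$.

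\emph{Multiplicity $3$.} Then $S$ is a cone and $g=f$ is homogeneous of degree $3$. Since the singularity is isolated, the base curve $C$ is smooth. The proof of Proposition \ref{prop:triple} shows $\mathfrak m^4=\mathfrak m^2\cdot(\partial f)$, which settles this case. In fact $T$ is the Milnor algebra $\mathbb C[x,y,z]/(\partial f)$, because $f$ is quasi-homogeneous and Euler's relation gives $f\in(\partial f)$. This algebra is graded and vanishes in degree $\ge 4$. Its socle sits in degree $3$, and its dimension is $8$.

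\emph{Multiplicity $2$.} Here the singularity is an isolated double point of a cubic surface. By the classical classification of singular cubic surfaces (e.g. Bruce--Wall), it is of type $A_k$ with $k\le 5$, $D_4$, $D_5$ or $E_6$. Each of these is quasi-homogeneous, so $T$ equals the Milnor algebra, of dimension $\mu\le 6$. In every case I would check that the Milnor algebra is spanned by monomials of degree $\le 3$ in suitable coordinates. For the normal forms:
\begin{itemize}
\item $A_k$ has basis $1,x,\dots,x^{k-1}$, of degree $\le 4$ when $k=5$;
\item $D_4$ and $D_5$ have basis monomials of degree $\le 3$;
\item $E_6$ has basis $1,x,y,x^2,xy,x^2y$, taking $x^3+y^4$ and noting $y^2\sim 0$, since $x^2,y^3$ lie in the Jacobian ideal and $xy^2$ has degree $3$.
\end{itemize}

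The main obstacle is that the analytic coordinates realizing a normal form differ from the projective affine coordinates. So "degree $\le 3$" is not a priori preserved. To avoid this, I would argue intrinsically: the claim is that $\mathfrak m^4\subseteq J=(g,\partial g)$. Since $J\supseteq \mathfrak m^{r}$ whenever the socle degree of $T$ (its highest $\mathfrak m$-adic level) is $<r$, it suffices to check that $\mathfrak m^4 T=0$. This is a coordinate-free statement, which can be verified on the normal forms. The only delicate case is $A_5$, where $x^4\neq0$ in the local algebra. There I would instead use the global geometry: an $A_5$ cubic has the explicit form $wxz+ y^3 + \dots$ from Bruce--Wall. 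Alternatively, I would argue via the total dimension count, showing that $S$ has at most $\mu\le 6$ conditions and that the $20$-dimensional space of cubics maps onto $T$. This can be verified directly from the equation $F=w\,q(x,y,z)+c(x,y,z)$: one computes that the images of $w^ax^by^cz^d$ with $a+b+c+d=3$ hit the basis. I expect this $A_5$ (and possibly $A_4$) verification to be the main computational step.

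Finally, surjectivity onto $T^1$ means that every element of the semiuniversal deformation base is realized to first order by cubics. By versality, this gives the statement that all local deformations of $(S,p)$ are realized by deforming $S$ as a cubic in $\mathbb P^3$.
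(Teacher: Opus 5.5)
\textbf{There is a genuine gap: the $A_5$ case is not proved.}

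Your reduction is correct. Surjectivity is equivalent to the images of the polynomials of degree $\le 3$ in the affine coordinates $x,y,z$ spanning $T=\mathbb C\{x,y,z\}/(g,\partial g)$, and $\mathfrak m^4T=0$ is a coordinate-free \emph{sufficient} condition for this. It is not equivalent to it, contrary to your ``Equivalently'', and this is exactly where the proof breaks down.

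For an $A_5$ point one has $T\cong\mathbb C[t]/(t^5)$, so $\mathfrak m^4T\neq 0$ and your criterion fails. What must then be shown is that some cubic, written in the affine coordinates coming from $\mathbb P^3$, maps to a nonzero element of $\mathfrak m^4T$. That depends on the embedding, so it cannot be read off from an analytic normal form. You leave this case at ``I expect''. The form you point to does not help either: with $p=[1:0:0:0]$, the local equation of $wxz+y^3+\cdots$ at $p$ is $xz+y^3+\cdots$, which is an $A_2$ point, not $A_5$.

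Your remaining cases are fine. The cone over a smooth cubic, $A_k$ with $k\le 4$, $D_4$, $D_5$ and $E_6$ all satisfy $\mathfrak m^4T=0$, so no extra work is needed for $A_4$. For $E_6=x^3+y^4$ the monomial basis is $1,x,y,xy,y^2,xy^2$, not the one you list, but the conclusion is unaffected.

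\textbf{How to close the gap within your framework.} After a linear change of $x,y,z$, one checks that an $A_5$ point has $F=wxy+c$ with $c=xA(x,y,z)+by^3$, where $b\neq 0$ and the $z^2$-coefficient $a$ of the quadric $A$ is nonzero. Geometrically, the curve $c=0$ is smooth at the vertex of $xy=0$ and has $x=0$ as an inflectional tangent there. In $T$:
\begin{itemize}
\item $g_x=0$ gives $y=-(A+xA_x)\in\mathfrak m^2T$;
\item $g_y=0$ gives $x=-3by^2(1+A_y)^{-1}\in\mathfrak m^4T$;
\item hence $\mathfrak mT=zT$ by Nakayama, so $T=\mathbb C\{z\}/(z^5)$ since $\tau(A_5)=5$;
\item $y\equiv -az^2$ modulo $\mathfrak m^3T$, because $xA_x\in\mathfrak m^5T$ and every other monomial of $A$ contains $x$ or $y$;
\item therefore $x=-3a^2b\,z^4\neq 0$ in $T$.
\end{itemize}
So the images of $1,z,z^2,z^3$ and of the linear form $x$ form a basis of $T$, and the map is onto.

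\textbf{Comparison with the paper.} The paper uses no classification. It splits $0\to T_S\to T_{\mathbb P^3}|_S\to N_{S|\mathbb P^3}\to T^1_{S,p}\to 0$ into two short exact sequences. It then gets $h^1(S,N'_{S|\mathbb P^3})=0$ from $h^1(S,T_{\mathbb P^3}|_S)=0$ (Euler sequence) and $h^2(S,T_S)=0$ (Serre duality and Bott vanishing). This gives surjectivity uniformly. Your local route, once completed, is more elementary and explicit, but it depends on the Bruce--Wall list and a case-by-case check.
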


\begin{proof} We have the long exact sequence of sheaves
$$
0\longrightarrow T_S\longrightarrow T_{\mathbb P^3|_S}\longrightarrow N_{S| \mathbb P^3}\longrightarrow T^1_{S,p}\longrightarrow 0.
$$
We denote by $N'_{S| \mathbb P^3}$ the kernel of the  map $N_{S| \mathbb P^3}\longrightarrow T^1_{S,p}$, so that we have the two short exact sequences
\begin{equation}\label{eq:uno}
0\longrightarrow T_S\longrightarrow T_{\mathbb P^3|_S}\longrightarrow N'_{S| \mathbb P^3}\longrightarrow 0
\end{equation}
and
\begin{equation}\label{eq:due}
0\longrightarrow N'_{S| \mathbb P^3}\longrightarrow N_{S| \mathbb P^3}\longrightarrow T^1_{S,p}\longrightarrow 0.
\end{equation}

We claim that $h^2(S, T_S)=0$. Indeed by Serre duality $h^2(S, T_S)=h^0(S, \Omega_S(-1))$. So it suffices to prove that $h^0(S, \Omega_S)=0$.

We have the exact sequence
$$
0\longrightarrow \mathcal O_{S}(-3)\longrightarrow \Omega_{\mathbb P^3|_S}\longrightarrow \Omega_{S}\longrightarrow 0.
$$
By the vanishing of $h^1(S,\mathcal O_{S}(-3))=h^1(S,\mathcal O_{S}(2))=0$, the assertion will follow by proving that $h^0(S,\Omega_{\mathbb P^3|_S})=0$. 
We look at the exact sequence
\begin{equation}\label{eq:bott}
0\longrightarrow \Omega_{\mathbb P^3}(-3)\longrightarrow \Omega_{\mathbb P^3}\longrightarrow \Omega_{\mathbb P^3|_S}\longrightarrow 0.
\end{equation}
We have clearly $h^0(\mathbb P^3,  \Omega_{\mathbb P^3})=0$ and also $h^1(\mathbb P^3,  \Omega_{\mathbb P^3}(-3))=0$ by Bott's formula. Hence $h^0(S,\Omega_{\mathbb P^3|_S})=0$. 

We claim next that $h^1(S, T_{\mathbb P^3|_S})=0$. In fact we have the Euler sequence
$$
0\longrightarrow \mathcal O_S\longrightarrow \mathcal O_S(1)^{\oplus 4}\longrightarrow T_{\mathbb P^3|_S} \longrightarrow 0.
$$
Since $h^1(S,\mathcal O_S(1))=0$ and $h^2(S,\mathcal O_S)=h^0(S,\omega_S)=h^0(S,\mathcal O_S(-1))=0$, the assertion follows. 

Then, since $h^1(S, T_{\mathbb P^3|_S})=h^2(S, T_S)=0$,   from \eqref {eq:uno} follows that $h^1(S, N'_{S|\mathbb P^3})=0$. Then the assertion follows from \eqref {eq:due}.
\end{proof}

By applying Proposition \ref{prop:goodab}, Remark \ref {rem:5ic}, (c),   Theorem \ref{thm:ord}, Proposition \ref {prop:triple}  and Lemma \ref {lem:onto}, we have:

\begin{corollary}\label{cor:gtp} One has:
\begin{itemize}
\item [(i)] $\mathcal T_{n,3}$ is contained in the closure of $V^{\mathbb P^3, |\mathcal O_{\mathbb P^3}(n)|}_{4}$ as a subvariety of codimension 3, but not in the closure of  $V^{\mathbb P^3, |\mathcal O_{\mathbb P^3}(n)|}_{\delta}$ with $\delta>4$. More precisley, an isolated ordinary triple point of a surface in $\mathbb P^3$ is never a limit of more than four nodes; 
\item[(ii)] $\mathcal T_{n,4}$ is contained in the closure of a regular component of $V^{\mathbb P^3, |\mathcal O_{\mathbb P^3}(n)|}_{16}$,   as a codimension $1$ subvariety; 
\item[(iii)] $\mathcal T_{n,5}$ is contained in the closure of a regular component of $V^{\mathbb P^3, |\mathcal O_{\mathbb P^3}(n)|}_{31}$,  as a codimension $1$ subvariety; 
\item [(iv)] for any $m\geq 5$, $\mathcal T_{n,m}$ is contained in the closure of a regular component of $V^{\mathbb P^3, |\mathcal O_{\mathbb P^3}(n)|}_{{m-1}\choose 2}$.
\end{itemize}\end{corollary}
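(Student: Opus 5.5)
The plan is to feed into Theorem \ref{thm:ord} the very regular components supplied by Proposition \ref{prop:goodab} (for $m\le 5$) and by Remark \ref{rem:5ic}, (c) (for all $m$). Then we compute codimensions by comparing $t_{n,m}$ in \eqref{codimension} with the dimension of a regular component. For (i) there is also an upper bound, which we obtain from the local analysis in Proposition \ref{prop:triple} and Lemma \ref{lem:onto}.

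\emph{Parts (ii), (iii) and (iv).} By Remark \ref{rem:5ic}, (a), Kummer quartics and Togliatti quintics exist. By Proposition \ref{prop:goodab}, every component of $V^{\mathbb P^3,|\mathcal O_{\mathbb P^3}(4)|}_{16}$ and of $V^{\mathbb P^3,|\mathcal O_{\mathbb P^3}(5)|}_{31}$ is very regular. Theorem \ref{thm:ord} then places $\mathcal T_{n,4}$, respectively $\mathcal T_{n,5}$, in the closure of a regular component $W$ of $V^{\mathbb P^3,|\mathcal O_{\mathbb P^3}(n)|}_{16}$, respectively $V^{\mathbb P^3,|\mathcal O_{\mathbb P^3}(n)|}_{31}$. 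The dimension of such a $W$ is $\binom{n+3}3-1-\delta$. In both cases $\delta=\delta_0(m)$, since equality holds in \eqref{eq:lim}, so \eqref{codimension} gives $\dim W-t_{n,m}=1$. For (iv), Remark \ref{rem:5ic}, (c) gives a very regular component of $V^{\mathbb P^3,|\mathcal O_{\mathbb P^3}(m)|}_{\binom{m-1}2}$, and Theorem \ref{thm:ord} applies directly.

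\emph{Part (i), existence.} For $m=3$, Cayley cubics form the irreducible variety $V^{\mathbb P^3,|\mathcal O_{\mathbb P^3}(3)|}_4$, which is very regular by Proposition \ref{prop:goodab}. Theorem \ref{thm:ord} then gives the inclusion in the closure of a regular component. The codimension is $\binom{n+3}3-1-4-t_{n,3}$, where $t_{n,3}=\binom{n+3}3-\binom{5}3+2=\binom{n+3}3-8$, so the codimension equals $3$.

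\emph{Part (i), upper bound (the main point).} Let $(S,p)$ be an isolated ordinary triple point of any surface. By Proposition \ref{prop:triple}, the germ is analytically the cone over a smooth plane cubic $C$. This cone also appears as the singularity of the cone $\Gamma\subset\mathbb P^3$ over $C$, which is a cubic surface with an isolated singularity at its vertex. By Lemma \ref{lem:onto}, $H^0(N_{\Gamma|\mathbb P^3})\to T^1_{\Gamma,p}$ is surjective. Hence the versal deformation of the germ is realised by the family of cubic surfaces near $\Gamma$: any small deformation of the germ is induced, up to analytic equivalence, by a nearby cubic surface.

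Now suppose $\delta$ nodes of a deformation of $S$ converge to $p$. The induced deformation of the germ, restricted to a small ball, is then a deformation of the simple elliptic singularity $\tilde E_6$ with $\delta$ nodes in the Milnor ball. By versality, this is pulled back from the versal family, so a nearby cubic surface has at least $\delta$ nodes near the vertex. Since a reduced cubic surface with only isolated singularities, all nodes, has at most $4$ nodes (Remark \ref{rem:5ic}, (a)), we get $\delta\le4$. This gives the non-containment in the closure of $V^{\mathbb P^3,|\mathcal O_{\mathbb P^3}(n)|}_\delta$ for $\delta>4$.

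I expect the delicate step to be the transfer between the global and local pictures: justifying that nodes near $p$ on the degree $n$ deformation correspond to nodes of a cubic deformation. The approach is to use openness of versality and the fact that the local deformation type, including the number of nodes in the Milnor ball, is determined by the induced map to the base of the semiuniversal deformation. One must also make sure the nearby cubics are reduced with isolated singularities, which holds near the irreducible cone $\Gamma$.
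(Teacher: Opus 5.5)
Your proposal is correct and follows the paper's own route. The paper derives the corollary from Theorem \ref{thm:ord}, fed with the very regular components of Proposition \ref{prop:goodab} and Remark \ref{rem:5ic}, (c), together with the dimension count \eqref{codimension}. For the bound in (i) it uses Proposition \ref{prop:triple} and Lemma \ref{lem:onto}, so that every local deformation of the triple point is realised by cubic surfaces. Your versality argument, including the observation that all nodes must converge to the unique singular point $p$, just spells out the global-to-local transfer that the paper leaves implicit.
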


Note that Proposition \ref {prop:triple} and Lemma \ref {lem:onto} are used only for  (i).

\begin{remark}\label{rem:kl}  Concerning point (i)  of Corollary \ref {cor:gtp}, notice that  $V^{\mathbb P^3, |\mathcal O_{\mathbb P^3}(n)|}_{4}$ is clearly irreducible and regular. By \cite [Thm 1.1]{Kl}, any irreducible component of $V^{\mathbb P^3, |\mathcal O_{\mathbb P^3}(n)|}_{16}$ [resp., of $V^{\mathbb P^3, |\mathcal O_{\mathbb P^3}(n)|}_{31}$] is regular as soon as $n\geq 6$ [resp., as soon as $n\geq 9$]. 

\end{remark}

\subsubsection{The case of a (not necessarily general) ordinary singularity} 

We have: 

\begin{theorem}\label{thm:ord1} Let $S$ be a surface of degree $n$ in $\mathbb P^3$  with a unique {ordinary singularity}  $p$ of multiplicity $m$. Consider the minimal desingularization $f: S'\longrightarrow S$ of $S$ with the exceptional divisor of $S'$ over $p$  a plane curve $C$ of degree $m$. Let $\delta$ be such that there is a surface $X$ of degree $m$ in $\mathbb P^3$ having $C$ as a plane section and belonging to a regular component of 
$V^{\mathbb P^3, |\mathcal O_{\mathbb P^3}(m)|}_{\delta}$. Then the point corresponding to $S$ belongs to the closure of a regular component of $V^{\mathbb P^3, |\mathcal O_{\mathbb P^3}(n)|}_{\delta}$.
\end{theorem}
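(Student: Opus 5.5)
The plan is to run the degeneration of Section \ref{ssec:def} and conclude with Proposition \ref{prop:frt}, as in the proof of Theorem \ref{thm:ord}. The only change is that the surface on $\Theta$ is now the given $X$, rather than a general member of a good component.

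\emph{Setting up the central fibre.} Blow up $\mathcal X'=\mathbb P^3\times\mathbb D$ at $p\in\mathcal X'_0$, so that $\mathcal X_0=A\cup\Theta$ with $E=A\cap\Theta$. The strict transform $S_A\in|\mathcal O_A(nH-mE)|$ is the minimal resolution $S'$, because the singularity is ordinary and one blow-up resolves it. The curve $C=S_A\cap E$ is a smooth plane curve of degree $m$, hence $\delta_C=0$. Since $p$ is the unique singular point of $S$, the surface $S_A$ is smooth and $\delta_A=0$.

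\emph{Choosing $S_\Theta$.} Identify $\Theta\cong\mathbb P^3$ with $E$ a plane in it. By hypothesis $X$ has $C$ as a plane section. Since $PGL(4)$ acts transitively on pairs (plane, plane curve in it) of the same isomorphism type, we can choose the identification $\Theta\cong\mathbb P^3$ so that $X\cap E=C$ exactly, and set $S_\Theta:=X$. Then $S_0=S_A\cup S_\Theta$ is a Cartier divisor in $|\mathcal O_{\mathcal X_0}(nH-m\Theta)|$. Because $C$ is smooth and $X|_E=C$, the surface $X$ is smooth along $C$, so its $\delta$ nodes all lie off $E$. Hence $N$ consists of the $\delta$ nodes of $S_\Theta$.

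\emph{Independence of conditions.} This is the main step: we must show that $h^1(\mathcal X_0,\mathcal O_{\mathcal X_0}(nH-m\Theta)\otimes\mathcal I_N)=0$, i.e.\ that $N$ imposes $\delta$ independent conditions on $|\mathcal O_{\mathcal X_0}(nH-m\Theta)|$. Since $N\subset\Theta\setminus E$, it suffices to find, for each node $q\in N$, a section of $\mathcal O_{\mathcal X_0}(nH-m\Theta)$ vanishing on $N\setminus\{q\}$ but not at $q$. Consider the restriction to $\Theta$, which is $\mathcal O_\Theta(m)$. By the regularity of the component of $V^{\mathbb P^3,|\mathcal O_{\mathbb P^3}(m)|}_\delta$ containing $X$, there are degree $m$ forms on $\Theta$ separating the points of $N$.

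I would try to glue such forms to sections on $A$, comparing through the Mayer–Vietoris sequence
$$0\to\mathcal O_{\mathcal X_0}(L)\to\mathcal O_A(L)\oplus\mathcal O_\Theta(L)\to\mathcal O_E(L)\to0,\qquad L=nH-m\Theta .$$
Since $N\cap E=\emptyset$, the same sequence holds after twisting by $\mathcal I_N$, with $\mathcal O_\Theta(L)$ replaced by $\mathcal O_\Theta(m)\otimes\mathcal I_N$. To conclude, I would use two facts. First, $h^1(\Theta,\mathcal O_\Theta(m)\otimes\mathcal I_N)=0$; this is regularity. Second, the restriction $H^0(A,\mathcal O_A(nH-mE))\to H^0(E,\mathcal O_E(m))$ is surjective, together with $h^1(A,\mathcal O_A(nH-mE))=0$. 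Both hold for $n\ge m-1$: this is the standard statement that degree $n$ forms with a point of multiplicity $m$ at $p$ can have arbitrary tangent cone, a consequence of $H^1(A,\mathcal O_A(nH-(m+1)E))=0$. The resulting long exact cohomology sequence then gives the required vanishing.

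Once this vanishing is established, Proposition \ref{prop:frt} with $\delta_A=\delta_C=0$ and $\delta_\Theta=\delta$ deforms $S_0$ to $\delta$-nodal surfaces in $\mathcal X_t\cong\mathbb P^3$. Moreover, $S$ lies in the closure of a regular component of $V^{\mathbb P^3,|\mathcal O_{\mathbb P^3}(n)|}_\delta$, since the limit of $S_t$ in the family $\mathcal X'$ is the image of $S_0$, which is $S$.
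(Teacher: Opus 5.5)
Your proposal is correct and follows the paper's argument. The paper's proof says only that the theorem is an immediate consequence of Proposition \ref{prop:frt}: glue $S_A=S'$ to a suitably placed copy of $X$ in $\Theta$ along $C$. Your Mayer--Vietoris computation makes explicit a step the paper leaves tacit, namely that independence of the nodes for $|\mathcal O_\Theta(m)|$ gives independence for $|\mathcal O_{\mathcal X_0}(nH-m\Theta)|$.

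Two minor points.
\begin{itemize}
\item The vanishing $H^1(A,\mathcal O_A(nH-(m+1)E))=0$, and hence the surjectivity onto $H^0(E,\mathcal O_E(m))$, needs $n\geq m$ rather than $n\geq m-1$. This is harmless, since a degree $n$ surface has no point of multiplicity greater than $n$.
\item Exactly as the paper does, you read ``$X$ belongs to a regular component'' as saying that the nodes of $X$ itself impose independent conditions on $|\mathcal O_{\mathbb P^3}(m)|$. Regularity of a component is a priori only a condition at its general point, so this reading is an implicit assumption.
\end{itemize}
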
 

\begin{proof} Again this is an immediate consequence of Proposition \ref {prop:frt}.
\end{proof} 

\begin{remark}\label{rem:lip} Let $C$ be a smooth plane curve of degree $m$. Theorem \ref {thm:ord1} poses the problem of determining the maximum $\delta(C)$ of $\delta$ such that there is a surface $X$ of degree $m$ in $\mathbb P^3$ having $C$ as a plane section and belonging to a regular component of 
$V^{\mathbb P^3, |\mathcal O_{\mathbb P^3}(m)|}_{\delta}$.

If $C$ is general, one has $\delta(C)\leq \delta_0(m)$. In fact, by Proposition \ref {prop:good}, a regular and good component of the Severi variety is also very regular, and therefore  the assertion follows by Proposition \ref {prop:sev}. As in Remark \ref {rem:5ic}, (b), one can ask whether $\delta(C)=\delta_0(m)$ if $C$ is general. Suppose that this is the case. Then, by the proof of Theorem \ref {thm:ord1}, it would follow that $\mathcal T_{n,m}$ is contained in the closure of a regular component of $V^{\mathbb P^3, |\mathcal O_{\mathbb P^3}(n)|}_{\delta_0(m)}$ as a divisor. 

 Can the invariant $\delta(C)$ vary with $C$? \end{remark}

\subsection{An interesting example of a quasi--ordinary multiple point} In this section, we want to show that the answer to Problem \ref {prob:main} certainly depends on whether the point $p\in S$ ($S$ as usual of degree $n$ with no other singularity) is ordinary or quasi--ordinary. We make this by considering the example of a triple point. 

If the triple point $p\in S$ is ordinary, then, as we saw, the point corresponding to $S$ is the closure of $V^{\mathbb P^3, |\mathcal O_{\mathbb P^3}(n)|}_{4}$ and in general it does not sit in the closure of $V^{\mathbb P^3, |\mathcal O_{\mathbb P^3}(n)|}_{\delta}$ with $\delta>4$ (see Corollary \ref {cor:gtp}, (i)).  We will now show that there are quasi--ordinary triple points $p\in S$ such that $S$ is the closure of $V^{\mathbb P^3, |\mathcal O_{\mathbb P^3}(n)|}_{7}$.  

\begin{lemma}\label{lem:cubic} Let $X\subset \mathbb P^3$ be a 4--nodal cubic.
There is a plane section of $X$, not containing any of the four nodes, that consists of three lines not in a pencil. 

The seven points consisting of the four nodes and of the three vertices of this triangle give independent conditions to cubics in $\mathbb P^3$.
\end{lemma}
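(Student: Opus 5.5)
We use the classical model of the Cayley cubic. Every 4--nodal cubic surface is projectively equivalent to
$$X:\ x_1x_2x_3+x_0x_2x_3+x_0x_1x_3+x_0x_1x_2=0,$$
with nodes at the four coordinate points $e_0,\dots,e_3$. This follows from the irreducibility recalled in Remark \ref{rem:5ic}, (a): the four nodes are not coplanar, since otherwise the plane would meet $X$ in a cubic curve singular at four points, hence would be contained in $X$. We may therefore take them as coordinate points, and the cubics singular at these four points are exactly those spanned by the monomials $x_ix_jx_k$ with distinct indices. Such a cubic is smooth elsewhere precisely when all coefficients are nonzero, and rescaling the variables reduces to the equation above.

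\emph{The triangle.} Next we look for a plane through none of the $e_i$ that meets $X$ in three lines. The surface $X$ contains the nine lines $x_i=x_j=0$ (six edges) and $x_i+x_j=x_k+x_l=0$ (three further lines). The plane $\pi:\ x_0+x_1+x_2+x_3=0$ does not pass through any $e_i$. On $\pi$ we substitute $x_3=-(x_0+x_1+x_2)$. The cubic becomes $\sigma_3-\sigma_1\sigma_2$ in the variables $x_0,x_1,x_2$, where the $\sigma_i$ are the elementary symmetric polynomials, and this equals $-(x_0+x_1)(x_0+x_2)(x_1+x_2)$. So $\pi\cap X$ consists of the three lines $x_0+x_1=x_2+x_3=0$, $x_0+x_2=x_1+x_3=0$ and $x_1+x_2=x_0+x_3=0$ inside $\pi$. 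They are not concurrent: the three linear forms $x_0+x_1$, $x_0+x_2$, $x_1+x_2$ are independent on $\pi\cong\mathbb P^2$. The vertices of the triangle are $(1,-1,-1,1)$, $(1,-1,1,-1)$ and $(1,1,-1,-1)$.

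\emph{Independence.} We must show that the seven points impose $7$ conditions on the $20$--dimensional space of cubics, that is, that the cubics through them form a space of dimension $13$. One way is to exhibit, for each point, a cubic through the other six but not through it.

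For the vertex $q_1=(1,-1,-1,1)$ of the triangle, take the product of three planes: the plane $x_1+x_3=0$ contains $q_2=(1,-1,1,-1)$, $e_0$ and $e_2$, and the plane $x_2+x_3=0$ contains $q_3=(1,1,-1,-1)$, $e_0$ and $e_1$. A third plane through $e_3$ missing $q_1$, for instance $x_0=0$, completes the product.
\begin{itemize}
\item At $q_1$ the three factors take the values $-1+1=0$?
\end{itemize}
This fails, since $q_1$ lies on $x_1+x_3=0$. We adjust by choosing instead the plane through $e_0$, $e_2$ and $q_2$, which is $x_1+x_3=0$ up to scale, and this also vanishes at $q_1$. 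So the planes cannot be picked blindly, and a direct rank computation is simpler.

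The cubics vanishing at the $e_i$ are exactly those with no $x_i^3$ term, a space of dimension $16$. It remains to check that $q_1,q_2,q_3$ impose three independent conditions on this space. Evaluating the monomial $x_0x_1x_2$ at $q_1,q_2,q_3$ gives $(1,-1,-1)$. Evaluating $x_0^2x_1$ gives $(-1,-1,1)$. Evaluating $x_0^2x_2$ gives $(-1,1,-1)$. These three vectors are linearly independent, because the determinant of the matrix they form is nonzero ($=-4$).

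Hence the space of cubics through all seven points has dimension $16-3=13$, so the seven points impose independent conditions on cubics, which proves the lemma. The only delicate point in the argument is the normal form of the 4--nodal cubic; the triangle and the rank computation are then explicit.
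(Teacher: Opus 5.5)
Your proof is correct, but it takes a different route from the paper's. The paper uses the classical planar model of $X$: the image of $\mathbb P^2$ under the cubics through the six vertices of a complete quadrilateral, with the four sides contracted to the nodes. The three diagonals form a cubic through the six vertices, so they map to three coplanar lines. For independence the paper argues without coordinates. The cubics through the four nodes form a $\mathbb P^{15}$. Since the plane $\pi$ of the triangle misses the nodes, the kernel of restriction to $\pi$ is $\pi$ times the $\mathbb P^5$ of quadrics through the nodes. Hence the restriction is onto the whole $\mathbb P^9$ of plane cubics, and three distinct points of $\pi$ are independent for plane cubics.

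You instead reduce to Cayley's normal form $\sigma_3(x_0,\dots,x_3)=0$. You find the plane $\sum x_i=0$ via the identity $\sigma_3-\sigma_1\sigma_2=-(x_0+x_1)(x_0+x_2)(x_1+x_2)$; your lines $x_i+x_j=x_k+x_l=0$ are exactly the images of the paper's diagonals. You then verify independence by a $3\times 3$ rank computation on monomials that are not pure cubes, and the vertices and evaluations are right. Your version is fully explicit and checkable by hand. The paper's restriction argument is coordinate-free and proves more: any set of points of $\pi$ independent for plane cubics stays independent when the four nodes are added.

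Some cleanup is needed.
\begin{itemize}
\item Delete the abandoned attempt with products of three planes, including the itemized line ending in a question mark. It is scratch work, not proof.
\item With your vectors as rows in the stated order, the determinant is $4$, not $-4$. Only its non-vanishing matters.
\item The appeal to irreducibility of the Severi variety is superfluous: the normal form comes directly from your monomial computation.
\item The non-coplanarity step is not right as written. A plane cubic can be singular at four points (e.g.\ $2L+L'$), so this alone does not put the plane inside $X$.
\end{itemize}
Here is a correct quick argument for the last point. Every line through two nodes meets $X$ with multiplicity at least $4$, hence lies on $X$. Four coplanar, non-collinear points span at least four distinct such lines, which would force the plane into $X$. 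Then $X$ would be reducible with non-isolated singularities. Four collinear nodes are also impossible, because a line on a cubic surface with isolated singularities carries at most two singular points. Indeed, writing $X: x_0A+x_1B=0$ with $L=\{x_0=x_1=0\}$, those points are the common zeros of the binary quadrics $A|_L$ and $B|_L$.
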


\begin{proof}  It is well known that $X$ is the image of the plane via the rational map determined by the linear system of cubic curves passing through the vertices $P_1,\ldots, P_6$ of a \emph{quadrilateral}. 

\begin{figure}
\includegraphics[width=10 cm]{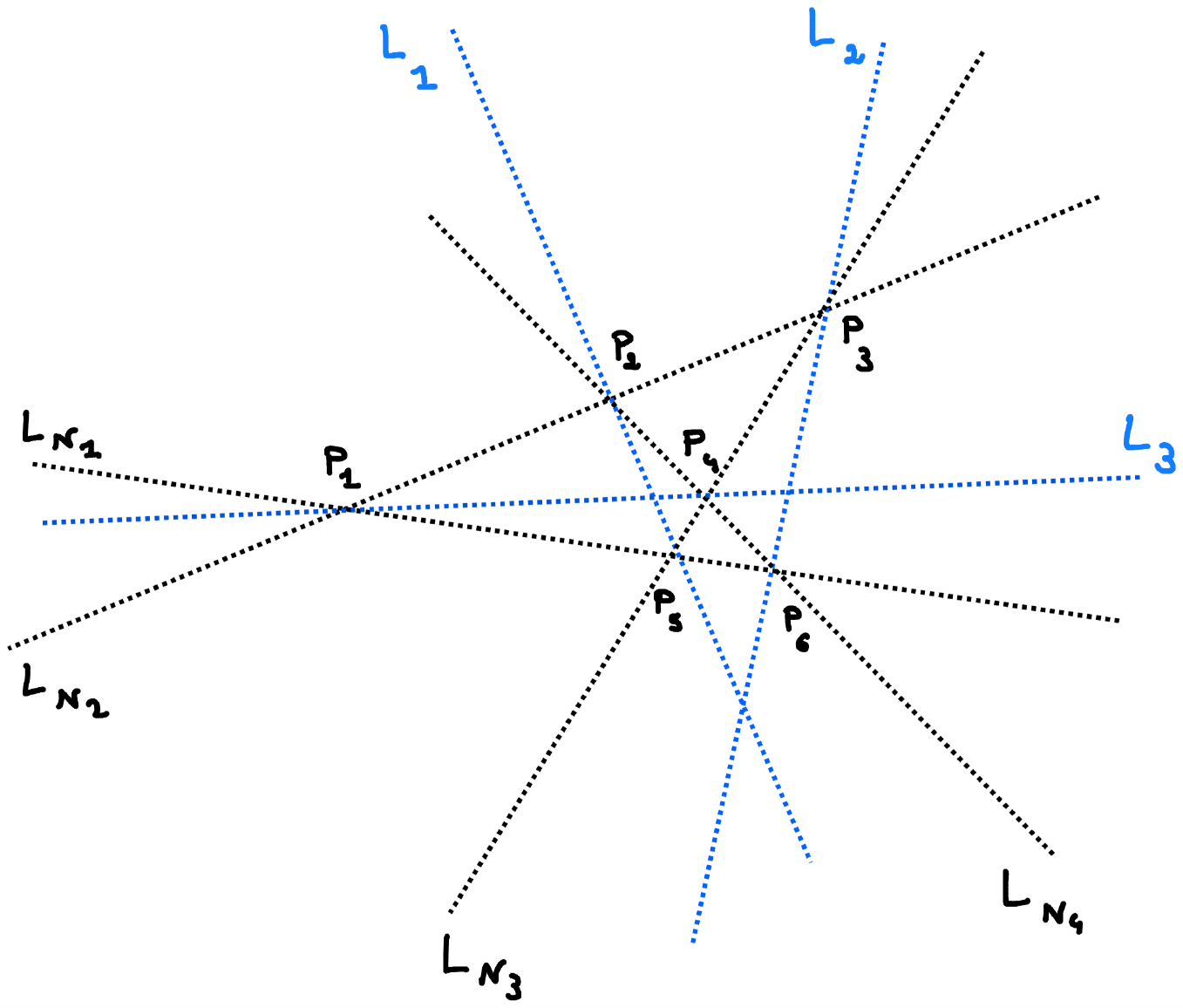}
\caption{}
\end{figure}

In Figure 1 the lines $L_{N_1},\ldots, L_{N_4}$ of the quadrilateral are depicted in black. They are mapped to the four nodes $N_1,\ldots, N_4$ respectively. 

Look at the three blue  lines $L_1,L_2,L_3$ in Figure 1. Each of them is mapped to a line and the union of the three lines forms a cubic through $P_1,\ldots, P_6$. This proves the first assertion of the lemma. 

As for the second assertion, we consider the linear system $\mathcal L$ of cubics passing through the four nodes of $S$. This has dimension $15$ because the nodes impose independent conditions to the cubics. Consider the restriction of the linear system $\mathcal L$ to the plane containing $L_1,L_2,L_3$.  The kernel of this restriction is isomorphic to the linear system of quadrics passing through the four nodes, that has dimension 5. So the restriction linear system has dimension 9 hence it  is the complete linear system of cubics in the plane, and this implies the assertion. \end{proof}

\begin{proposition}\label{prop:7} Consider a surface $S\subset \mathbb P^3$ of degree $n\geq 4$ with, as its only singularity,  a quasi--ordinary triple point $p$, whose minimal resolution $f: S'\longrightarrow S$ is such that  the exceptional divisor $C$ over $p$ is a plane cubic with three nodes, i.e., it is a plane cubic consisting of three lines not in a pencil. Then $S$ is in the closure of the Severi variety $V^{\mathbb P^3, |\mathcal O_{\mathbb P^3}(n)|}_{7}$.
\end{proposition}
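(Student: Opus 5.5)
The plan is to apply the degeneration of Section \ref{ssec:def} with $m=3$ and then invoke Proposition \ref{prop:frt}. Blow up $\mathcal X'=\mathbb P^3\times\mathbb D$ at $p\in\mathcal X'_0$, getting $\mathcal X_0=A\cup\Theta$ with $E=A\cap\Theta$. The strict transform $S_A\cong S'$ lies in $|\mathcal O_A(nH-3E)|$ and cuts on $E$ the triangle $C=L_1\cup L_2\cup L_3$, which has $\delta_C=3$ nodes (its vertices). Since $S$ has no other singularities, $S_A$ is smooth, so $\delta_A=0$. For $S_\Theta\subset\Theta\cong\mathbb P^3$ I will take a $4$--nodal Cayley cubic $X$, placed so that $X\cap E=C$. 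By Lemma \ref{lem:cubic}, $X$ has a plane section consisting of three lines not in a pencil and avoiding the nodes. Any two such triangles in $\mathbb P^2$ are projectively equivalent, so a projectivity of $\Theta$ carries that plane to $E$ and the triangle to $C$. Then $S_0=S_A\cup S_\Theta$ is a Cartier divisor in $|\mathcal O_{\mathcal X_0}(nH-3\Theta)|$ with $\delta_\Theta=4$ nodes off $E$. The total count is $\delta=0+3+4=7$.

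Before applying Proposition \ref{prop:frt}, I need to check that $S_\Theta$ is smooth along $C$, as the set--up requires. This holds because $X$ has only the four nodes and these are off $E$. The nodes of $C$ are points where $S_A$ and $S_\Theta$, both smooth there, meet $E$ in curves with a node. So they give nodes of $S_0$ on the double locus, of the type counted in $\delta_C$.

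The key step, and the one I expect to be the main obstacle, is the independence condition
$$h^1(\mathcal X_0,\mathcal O_{\mathcal X_0}(nH-3\Theta)\otimes\mathcal I_N)=0.$$
I would argue via the restriction sequence to the two components. A section of $\mathcal O_{\mathcal X_0}(nH-3\Theta)$ is a pair consisting of a section on $A$ in $|nH-3E|$ and a cubic on $\Theta$ that agree on $E$. All seven points of $N$ lie on $\Theta$: the four nodes off $E$, and the three vertices on $E\subset\Theta$. So it suffices to show two things. First, the seven points impose independent conditions on cubics of $\Theta$, which is exactly the second assertion of Lemma \ref{lem:cubic}. Second, the restriction map from sections on $A$ to $H^0(E,\mathcal O_E(3))$ is surjective, so that every cubic on $\Theta$ can be matched by a section on $A$.

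For the surjectivity, I use the sequence
$$0\to\mathcal O_A(nH-4E)\to\mathcal O_A(nH-3E)\to\mathcal O_E(3)\to0.$$
We have $H^1(A,\mathcal O_A(nH-4E))=0$ because points of multiplicity $4$ impose independent conditions on degree $n$ surfaces when $n\ge 3$, and here $n\ge4$. Granting this, a cubic on $\Theta$ vanishing at $N$ can be chosen arbitrarily in the space of cubics through $N$, and then lifted to $A$. Hence the codimension of the sections vanishing on $N$ is exactly $7$, and the $h^1$ vanishes.

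Proposition \ref{prop:frt} then deforms $S_0$ to $7$--nodal degree $n$ surfaces in $\mathcal X_t\cong\mathbb P^3$. This places $[S]$ in the closure of a regular component of $V^{\mathbb P^3,|\mathcal O_{\mathbb P^3}(n)|}_7$. Here $S$ is recovered as the image of $S_0$ under the contraction of $\Theta$, i.e.\ $S$ is the flat limit in $\mathcal X'$.
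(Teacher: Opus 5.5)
Your proposal is correct and follows the paper's proof. You blow up at $p$, take $S_\Theta$ to be a $4$--nodal Cayley cubic whose triangle plane section is moved onto $C$ by a projectivity, count $\delta=0+3+4=7$, and apply Proposition \ref{prop:frt}. You also spell out the independence hypothesis, which the paper leaves implicit: all seven points lie on $\Theta$, Lemma \ref{lem:cubic} makes them independent for cubics, and $H^1(A,\mathcal O_A(nH-4E))=0$ for $n\ge 3$ makes $H^0(A,\mathcal O_A(nH-3E))\to H^0(E,\mathcal O_E(3))$ surjective.
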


\begin{proof} Let us go back to the deformation set--up as in Section \ref {ssec:def}. We have $S_A=S'$, and we may assume that  $S_\Theta$ is the 4--nodal cubic surface having as plane section the triangle $C$, as shown in Lemma \ref {lem:cubic}. Then the assertion follows from Proposition \ref {prop:frt}. \end{proof}

\begin{remark} Again by the results in \cite {Kl}, any irreducible component of $V^{\mathbb P^3, |\mathcal O_{\mathbb P^3}(n)|}_{7}$ is regular. 
\end{remark}

\section{Some rational surfaces in $\mathbb P^3$} \label{sec:rat}

In this section we will introduce and study some irreducible rational surfaces $S$ of degree $n\geq 3$ in $\mathbb P^3$ that have a line $R$ of points of multiplicity $n-2$. These surfaces will be used in Section \ref {sec:line} when we will consider Problem \ref {prob:main} for surfaces with a double line.

\subsection{A remark on hyperelliptic curves} Let  $n\geq 3$  be an integer. Let us consider the linear system $\mathcal C$ of dimension $3n-4$ of plane curves of degree $n-1$ with a fixed  point $q\in \mathbb P^2$ of multiplicity at least $n-3$.   For $n=3$ we are just considering the complete linear system of conics and a fixed point $q$ in the plane.  There is a dense open subset  $U$ of $\mathcal C$ such that all points in $U$ correspond to curves that have an ordinary singularity of multiplicity $n-3$ at $q$ and are elsewhere smooth, so that the normalization of these curves have genus 
$p:=n-3\geq 0$  and have the $g^1_2$ cut out by the lines through $q$. If $G$ is the group of projectivities that fixes $q$, we have an obvious morphism $f: U/G\longrightarrow \mathcal M^1_{p,2}$ where $\mathcal M^1_{p,2}$ is the moduli space of pairs $(C,\mathfrak h)$ with $C\in \mathcal M_p$ a curve of genus $p$ and $\mathfrak h$ a $g^1_2$ on $C$. Note that $\mathcal M^1_{p,2}$ coincides with the locus in $\mathcal M_p$ of hyperelliptic curves, if $p\geq 2$, whereas  $\mathcal M^1_{1,2}$ is the relative Picard bundle of degree 2 line bundles on $\mathcal M_1$, and, finally, $\mathcal M^1_{0,2,}\simeq {\mathbb P^2}^*$ is the set of $g^1_2$ on $\mathbb P^1$. In particular, the dimension of $\mathcal M^1_{p,2}$ is $2p-1$ if $p\geq 2$, and it is $2$ if $p=0,1$.

\begin{proposition}\label{prop:kop} The map $f: U/G\longrightarrow \mathcal M^1_{p,2}$ is dominant.
\end{proposition}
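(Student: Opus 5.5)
Rather than a dimension count, I would prove dominance constructively: given a general pair $(\Gamma,\mathfrak h)\in\mathcal M^1_{p,2}$, I exhibit a curve in $U$ whose normalization is $\Gamma$ and whose $g^1_2$ cut out by the lines through $q$ is $\mathfrak h$. It is enough to do this for a general pair, or for every pair in a dense open subset, since then the image of $f$ is dense.

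The construction uses an embedding of $\Gamma$ in a Hirzebruch surface followed by an elementary birational map to the plane. Let $\phi:\Gamma\to\mathbb P^1$ be the double cover given by $\mathfrak h$; it has $2p+2$ branch points. I realize $\Gamma$ in a surface $\mathbb F_1$ as follows.
\begin{itemize}
\item[(1)] Recall that $\mathbb F_1$ is the blow-up of $\mathbb P^2$ at $q$, with exceptional curve $e$ and fibration $\pi:\mathbb F_1\to\mathbb P^1$ induced by the lines through $q$.
\item[(2)] Curves in $|2\ell-(2-k)\cdots|$ are not the natural tool here. Instead, take the class $a\,e+b\,f$, where $f$ is the class of a fibre. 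A plane curve of degree $n-1$ with multiplicity $n-3$ at $q$ has strict transform in class $(n-1)\ell-(n-3)e=2e+(n-1)f$, using $\ell=e+f$.
\item[(3)] This class is bisecant to the fibres, and has arithmetic genus $p=n-3$ by adjunction.
\end{itemize}

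So it suffices to show that every double cover $\Gamma\to\mathbb P^1$ of genus $p$ embeds in $\mathbb F_1$ in the class $2e+(n-1)f$ compatibly with $\pi$. To do this, write $\phi_*\mathcal O_\Gamma=\mathcal O\oplus\mathcal O(-p-1)$. Then $\Gamma$ embeds in the $\mathbb P^1$-bundle $\mathbb P(\mathcal O\oplus\mathcal O(-p-1))$ as a bisection, namely as the locus $y^2=g(x)$ with $g$ of degree $2p+2$. This ambient surface is $\mathbb F_{p+1}$, and $\Gamma$ there is disjoint from the negative section.

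Next I perform elementary transformations. I blow up points of $\Gamma$ on $p$ suitable fibres and contract the strict transforms of those fibres; each such step lowers the Hirzebruch index by one, and I choose the points so that this happens. After $p$ steps I land in $\mathbb F_1$. Since the blown-up points lie on $\Gamma$, the curve $\Gamma$ stays smooth in its image, and the image is still a bisection. Computing its class, with intersection number with $e$ equal to $n-3$, gives exactly $2e+(n-1)f$.

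Finally I contract $e$ to recover $\mathbb P^2$. The image of $\Gamma$ is a degree $n-1$ curve with an $(n-3)$-fold point at $q$. The point is ordinary as soon as $\Gamma$ meets $e$ transversally in $n-3$ distinct points, and this can be arranged by choosing the elementary transformations generally. The lines through $q$ cut out $\mathfrak h$, because they correspond to the fibres of $\pi$. Hence the constructed curve lies in $U$ and $f$ of its class is $(\Gamma,\mathfrak h)$.

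The cases $p=0,1$ need separate attention, since then $\mathcal M^1_{p,2}$ records the $g^1_2$ and not only the curve. For $p=0$ the curve is a conic, and the $g^1_2$ cut by the lines through $q$ is any one not having $q$ on the conic. Since $G$ moves the conic freely, every $g^1_2$ on $\mathbb P^1$ is attained. For $p=1$ the above construction, starting from $\mathbb F_2$, still works, because it takes an arbitrary degree-2 line bundle as input.

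The main obstacle is the genericity bookkeeping in the elementary transformations. They must produce the correct intersection with $e$, and they must keep the singularity at $q$ ordinary. As a fallback I would compare dimensions: $\dim U/G=3n-4-(8-2)=3n-10$ against $2p-1=2n-7$. These are compatible, but I would still need to show that the general fibre of $f$ has dimension $n-3$, so I prefer the constructive argument above.
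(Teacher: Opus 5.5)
Your argument is correct. It takes a different path from the paper's, although both end at the same configuration: a smooth bisection of class $2e+(n-1)f$ on $\mathbb F_1$, then contraction of $e$.

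\textbf{The paper's route.} It picks an auxiliary general non-special $g^1_{n-2}$, $\mathfrak k$, embeds $C$ via $\mathfrak h\times\mathfrak k$ as a curve of type $(2,n-2)$ on a smooth quadric, and projects from a general point $x\in C$. In your terms this is a single elementary transformation $\mathbb F_0\dashrightarrow\mathbb F_1$ centred at $x$. After it, the $\mathfrak k$-line through $x$ becomes the $(-1)$-section, meeting the curve in the $n-3$ further points of its $\mathfrak k$-divisor. So the paper goes up from $\mathbb F_0$ in one step, while you go down from the intrinsic model $y^2=g(x)$ in $\mathbb F_{p+1}$ in $p$ steps.

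\textbf{What each buys.} The paper's route is shorter. However, it rests on the asserted (standard) fact that $\mathfrak h\times\mathfrak k$ is an embedding. For ordinarity it also needs the $\mathfrak k$-divisor through $x$ to be reduced. Your route uses no auxiliary linear series and works for every $(C,\mathfrak h)$, not just a general one.

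\textbf{The bookkeeping.} What you single out as the main obstacle is in fact easy.
\begin{itemize}
\item Each step (blow up a point of $\Gamma$, contract a $(-1)$-curve meeting $\tilde\Gamma$ once) preserves $\Gamma^2=4(p+1)$.
\item Hence in $\mathbb F_{p+1-k}$ one has $\Gamma\sim 2\sigma+(2p+2-k)f$ and $\Gamma\cdot\sigma=k$.
\item The point of $\Gamma\cap\sigma$ created at step $k$ is the conjugate $\bar x_k$ of the centre $x_k$.
\end{itemize}
So choosing $x_1,\dots,x_p$ off the current negative section and in distinct fibres over non-branch points gives $p=n-3$ distinct transversal intersections with $e$. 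As a by-product, $\mathcal O_\Gamma(R)\cong\mathfrak h+\bar x_1+\dots+\bar x_p$ visibly fills a dense subset of $\mathrm{Pic}^{n-1}(\Gamma)$, as asserted in the remark following the proposition.

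\textbf{Minor points.} Your separate treatment of $p=0,1$ is unnecessary: the general construction already covers these cases, with $0$ and $1$ elementary transformations respectively. The stray sentence in your item (2) should simply be removed.
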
 

\begin{proof}Take $(C,\mathfrak h)$ general in $\mathcal M^1_{p,2}$. Consider  a general, non special, $g^1_{n-2}$ on $C$, that we denote by $\mathfrak k$. The minimal sum of $\mathfrak h$ and $\mathfrak k$ has dimension $3$ and maps $C$ isomorphically to a curve of degree $n$ and type $(2,n-2)$ on a smooth quadric in $\mathbb P^3$. Let $x\in C$ be a general point, and project $C$ down from $x$ to a plane $\mathbb P^2$. The image $\Gamma$ of $C$ is a curve of degree $n-1$ having a point of multiplicity $n-3$, as required. \end{proof}

\begin{remark}\label{prop:kop-rm}

Assume $n\geq 5$, i.e., $p\geq 2$. Then the general fibre of $f$ has dimension 
$$
\dim(U/G)-\dim(\mathcal M^1_{p,2})=(3n-10)-(2p-1)=n-3=p,
$$
which means that, given a general  hyperelliptic curve $C$ of genus $p=n-3$, it can be realized as a plane curve of degree $n-1$ with a point of multiplicity $n-3$ and no other singularity  in $\infty^p$ ways, i.e., the degree $n-1$ embedding line bundle in the plane varies in the whole ${\rm Pic}^{n-1}(C)$. Of course the same conclusion
 is trivially true for $n=3$, and it also  holds for $n=4$.  In these two cases, the general fibre of $f$ has dimension
$
\dim(U/G)-\dim(\mathcal M^1_{p,2})=0.
$
\end{remark}

\subsection{Certain linear systems of plane curves}\label{ssec:constr} Let us  consider a plane curve $\Gamma$ of degree $n-1\geq 2$ with an ordinary point $q$ of  multiplicity $n-3$ and no other singularity, with normalization $C\longrightarrow \Gamma$ a curve of genus $p=n-3$ and let us consider the  linear series $\mathfrak h=g^1_2$ on $C$ cut out by the lines through $q$. Suppose that $(C,\mathfrak h)$ is a general point in $\mathcal M^1_{p,2}$. 
 The ramification points of $\mathfrak h$ are $2n-4$. Hence there are $2n-4$ distinct lines through $q$ that are tangent to $\Gamma$ in points that, by generality,  we can assume to be off $q$.

Let us write  $3n-4=2\ell+\epsilon$, with $0\leq \epsilon\leq 1$, and notice that 
$4n-8\geq 2\ell$. Indeed, the assertion is trivial for $n=3$ and if $n\geq 4$ then $4n-8\geq 3n-4\geq 2\ell$.  Thus, we can fix $\ell\leq 2n-4$ of the $2n-4$ tangent lines to $\Gamma$ passing through $q$ and let us denote them by $r_1,\ldots, r_\ell$.
Let us denote by $q_1,\ldots, q_\ell$ the tangency points of $r_1,\ldots, r_\ell$ with $\Gamma$ off $q$. Let us also denote by $q'_i$ the infinitely near point to $q_i$ along $r_i$, for $i=1,\ldots, \ell$, and notice that the $2\ell$ points $q_i, q'_i$, for $i=1,\ldots, \ell$, belong  to $\Gamma$. If $\epsilon=1$ let us  fix also a general point $\bar q$ on $\Gamma$. 

Consider now the linear system $\mathcal L$ of plane curves of degree $n$ that have at $q$ a point of multiplicity at least $n-2$ and pass through the points $q_i, q'_i$,  for $i=1,\ldots, \ell$, and through $\bar q$ if $\epsilon=1$. 

We want to study the system $\mathcal L$ and the image of the plane via the rational map determined by $\mathcal L$. To do this, it is convenient to pass to the blow--up $f: X\longrightarrow \mathbb P^2$ of the plane at the point $q$, at the points $q_i$ and then at the infinitely near points $q'_i$, for $i=1,\ldots, \ell$, and at $\bar q$ if $\epsilon=1$. We let $\mathcal L'$ be the line bundle on $X$ corresponding to the strict transform of $\mathcal L$ on $X$. Let $R$ be the pull--back on $X$ of a general line of $\mathbb P^2$, let $L$ be the strict transform on $X$ of a general line of $\mathbb P^2$ through $q$ and we denote by $C$  the strict transform on $X$ of the curve $\Gamma$, which is the normalization of $\Gamma$. Then we have $\mathcal O_C(L)\sim \mathfrak h$ and, as we observed, $\mathcal O_C(R)$ is a general line bundle on $C$ of degree $n-1$.

Consider the exact sequence
\begin{equation}\label{eq:lp}
0\longrightarrow \mathcal L'(-C)\longrightarrow \mathcal L'\longrightarrow \mathcal L'_{|C}\longrightarrow 0.
\end{equation}
We notice that $\mathcal L'(-C)\cong \mathcal O_X(L)$, and therefore $h^1(X, \mathcal L'(-C))=0$. This implies that $|\mathcal L'|$ cuts out on $C$ a complete linear series $\mathfrak g$. The degree of $\mathfrak g$ is 
$$
n(n-1)-(n-2)(n-3)-(3n-4)=n-2= p+1
$$
and more precisely one has that
$$
\mathfrak g\sim (n-2)\mathfrak h +2R_{|C}- \ell \mathfrak h-\epsilon\bar q
$$
and, as $\mathcal O_C(R)$ varies in ${\rm Pic}^{n-1}(C)$, we see that $\mathfrak g$ varies in the whole of ${\rm Pic}^{n-2}(C)$, so that with an appropriate choice of $\mathcal O_C(R)$ we may assume that $\mathfrak g$ is any given general $g^1_{n-2}$ on $C$. 

\subsection{The image surface}\label{subsec: image surface} Looking  at the sequence \eqref {eq:lp}, under the above generality assumptions on $(C, \mathfrak h)$ and on the series $\mathfrak g$, we see that $\dim(|\mathcal L'|)=3$, hence also $\dim (\mathcal L)=3$. 

\begin{lemma}\label{lem:lopg} Under the above generality assumptions on $(C, \mathfrak h)$ and on the series $\mathfrak g$,
the linear system $|\mathcal L'|$ has no base points.
\end{lemma}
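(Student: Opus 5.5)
We will use the restriction sequence \eqref{eq:lp} twice: once to show there are no base points on $C$, and once to move from $C$ to the rest of $X$. First, since $h^1(X,\mathcal L'(-C))=h^1(X,\mathcal O_X(L))=0$, the map $H^0(X,\mathcal L')\to H^0(C,\mathcal L'_{|C})$ is surjective. Its target is the complete series $\mathfrak g$, of degree $n-2=p+1$. By \eqref{eq:lp} and $h^0(\mathcal O_X(L))=2$ we get $\dim|\mathcal L'|=3$, so $\mathfrak g$ has projective dimension $1$, i.e. $\mathfrak g$ is a $g^1_{p+1}$. As noted in \S\ref{ssec:constr}, $\mathfrak g$ can be taken to be a general line bundle of degree $p+1$ on $C$. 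A general line bundle of degree $p+1$ on a curve of genus $p$ is non-special with $h^0=2$, and its pencil is base point free. Indeed, if $x$ were a base point, then $\mathfrak g-x$ would be an effective divisor class of degree $p$ with $h^0=2$. Such classes form a proper closed subset of ${\rm Pic}^p(C)$, namely the special divisors $W^1_p$. Hence for general $\mathfrak g$ there is no base point, and the surjectivity above shows that $|\mathcal L'|$ has no base points on $C$. The cases $p=0,1$ are checked by hand: $\mathfrak g$ is a $g^1_1$ on $\mathbb P^1$, respectively a general degree $2$ pencil on an elliptic curve.

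Next, any base point of $|\mathcal L'|$ off $C$ must be a base point of the subsystem $C+|L|\subset|\mathcal L'|$, and therefore a base point of $|L|$. Now $|L|$ is the pencil of strict transforms of lines through $q$, and it is base point free on $X$, since $q$ has been blown up. So $|L|$ has no base points at all, and every base point of $|\mathcal L'|$ lies on $C$. Combined with the first step, this shows $|\mathcal L'|$ is base point free.

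The main obstacle is the blown-up points: we must make sure the identification $\mathcal L'(-C)\cong\mathcal O_X(L)$ is exactly right on the iterated blow-up. This means checking that $C$ passes with multiplicity one through each $q_i$, $q'_i$ and $\bar q$, and with multiplicity $n-3$ through $q$. Given that, the class computation $nR-(n-2)E_q-\sum(E_i+E'_i)-\epsilon\bar E-\bigl((n-1)R-(n-3)E_q-\sum(E_i+E'_i)-\epsilon \bar E\bigr)=R-E_q=L$ confirms the identification. It also confirms that no exceptional curve, such as $E_i-E'_i$, is a fixed component. For this last point one notes that $\mathcal L'\cdot(E_i-E'_i)=0$, and that $L$ restricted to it is base point free.
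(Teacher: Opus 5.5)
Your argument is essentially the paper's: base points off $C$ are ruled out by the subsystem $C+|L|\subset|\mathcal L'|$ with $|L|$ base point free, and base points on $C$ are ruled out because $h^1(\mathcal O_X(L))=0$ makes $|\mathcal L'|$ cut out on $C$ the complete, base point free series $\mathfrak g$. Your check of the class of $\mathcal L'(-C)$ is harmless but not needed. One small slip is in your justification that a general $\mathfrak g$ is base point free: knowing only that $W^1_p$ is a proper closed subset of ${\rm Pic}^p(C)$ is not enough, since the bad locus is $W^1_p+C\subset{\rm Pic}^{p+1}(C)$; you need $\dim W^1_p\leq p-2$, which follows from $W^1_p=K_C-W_{p-2}$ by Riemann--Roch.
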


\begin{proof} The linear system $|\mathcal L'|$ contains the subsystem consisting of the fixed curve $C$ plus the base point free pencil $|L|$. So the only base points of $|\mathcal L'|$ may occur on $C$. However, as we saw, $|\mathcal L'|$ cuts out on $C$ the complete, base point free linear series $\mathfrak g$, so $|\mathcal L'|$ has no base points. \end{proof}

\begin{proposition}\label{prop:lopg1} Under the above generality assumptions on $(C,\mathfrak h)$ and on the series $\mathfrak g$, the linear system $|\mathcal L'|$ determines a birational morphism of $X$ to a surface $\Sigma$ in $\mathbb P^3$ of degree $n$ with a line $r$, the image of the curve $C$,  of multiplicity $n-2$. 
Via this morphism, the linear system $|L|$ is mapped to the linear system of conics cut out on $\Sigma$ by the planes through the line $r$. 
\end{proposition}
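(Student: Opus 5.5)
Let $\phi\colon X\to\mathbb P^3$ be the morphism given by $|\mathcal L'|$, which exists by Lemma \ref{lem:lopg}. The plan is to compute intersection numbers on $X$ and read off from them the degree of $\phi$, its behaviour on $C$, and its behaviour on $|L|$. The relevant classes are:
\begin{itemize}
\item $\mathcal L'\sim nR-(n-2)E_q-\sum_i(E_i+E_i')-\epsilon \bar E$, with the obvious exceptional classes, where $E_i$ and $E_i'$ are the total transforms of the blow-ups at $q_i$ and $q'_i$;
\item $L\sim R-E_q$;
\item $C\sim (n-1)R-(n-3)E_q-\sum_i(E_i+E_i')-\epsilon\bar E$.
\end{itemize}
In particular $\mathcal L'=C+L$, consistent with the decomposition used above.

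\textbf{Intersection numbers.} A direct computation gives
$$\mathcal L'^2=n^2-(n-2)^2-2\ell-\epsilon=4n-4-(3n-4)=n.$$
We also have $\mathcal L'\cdot L=n-(n-2)=2$ and $L^2=0$. Moreover $\mathcal L'\cdot C=\deg\mathfrak g=n-2$, and $\mathcal L'_{|C}=\mathfrak g$ is a complete $g^1_{n-2}$.

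\textbf{Image of $C$.} Since $\dim|\mathfrak g|=1$, the restriction of $\phi$ to $C$ factors through a pencil, so $\phi(C)$ is a line $r$. The map $C\to r$ has degree $n-2$.

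\textbf{Image of $|L|$.} The members of $|L|$ are smooth rational curves with $\mathcal L'\cdot L=2$, and they are not contracted. Each is therefore mapped either birationally onto a conic or $2:1$ onto a line. The curves $C+L'$ with $L'\in|L|$ form the subpencil of $|\mathcal L'|$ containing the fixed curve $C$. These divisors are exactly the pullbacks of the planes containing $r$, since $\phi^*H\supset C$ if and only if $H\supset r$. Hence each $\phi(L')$ is the residual intersection of $\Sigma=\phi(X)$ with a plane through $r$.

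\textbf{Birationality.} This is the main step. I would argue that $\phi$ cannot be $2:1$ on the general $L$. If it were, then $\Sigma$ would be swept out by lines. The degree count $\deg\phi\cdot\deg\Sigma=\mathcal L'^2=n$ then needs a separate argument. The cleanest route is to show that $\phi$ separates general points. For two general points $x,y$, pick the curves $L_x,L_y\in|L|$ through them. If $L_x\neq L_y$, the divisors $C+L_x$ and $C+L_y$ already separate $x$ and $y$. If $L_x=L_y$, use the restriction sequence
$$0\to\mathcal O_X(\mathcal L'-L)=\mathcal O_X(C)\to\mathcal L'\to\mathcal O_{L}(2)\to0.$$
Here $h^0(\mathcal O_X(C))=1$ by the generality of the points imposed, so the image of $H^0(\mathcal L')\to H^0(\mathcal O_L(2))$ is $3$-dimensional, i.e. all of $H^0(\mathcal O_L(2))$. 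Thus $L$ is embedded as a conic and $x,y$ are separated. It follows that $\phi$ is birational onto $\Sigma$, so $\deg\Sigma=\mathcal L'^2=n$.

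\textbf{Multiplicity along $r$.} A general plane $H\supset r$ cuts $\Sigma$ in $r$ plus a conic $\phi(L)$. Its intersection with $\Sigma$ therefore contains $r$ with multiplicity $n-2$. Equivalently, a general point of $r$ has $n-2$ preimages on $C$, each contributing a smooth branch, so $r$ has multiplicity exactly $n-2$.

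I expect the main obstacle to be verifying $h^0(\mathcal O_X(C))=1$, i.e. that $\Gamma$ is the only curve of its class. This should follow from $C^2=(n-1)^2-(n-3)^2-(3n-4)=n-4<2p-2$ together with the genericity of the tangency points, but I would check it carefully; the case $n=3$ is trivial.
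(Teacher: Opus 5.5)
Your overall route is the paper's: $\mathcal L'=C+L$, restriction to a general $L$ with kernel $\mathcal O_X(C)$, $\deg\Sigma=\mathcal L'^2=n$, and $C\to r$ via the complete pencil $\mathfrak g$. Your way of getting surjectivity onto $H^0(\mathcal O_L(2))$, namely that the image has dimension $h^0(\mathcal L')-h^0(\mathcal O_X(C))=4-1$, is equivalent to the paper's use of $h^1(\mathcal O_X(C))=0$ via Riemann--Roch.

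\textbf{The gap.} The one non-formal input, $h^0(\mathcal O_X(C))=1$, is exactly what you leave unproved, and the justification you sketch would not work. The inequality $C^2=p-1<2p-2$ says nothing about whether $\mathcal O_C(C)$ is effective, since the line bundles of degree $p-1$ with sections fill the theta divisor $W_{p-1}$. Nor do the points $q_i,q_i'$ help: they are not general, being ramification points of $\mathfrak h$. Moreover $(E_i+E_i')_{|C}$ is twice the point of $C$ over $q_i$, which is a divisor of $\mathfrak h$. So which tangency points you choose is invisible in the class of $\mathcal O_C(C)$.

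\textbf{How to close it.} From $0\to\mathcal O_X\to\mathcal O_X(C)\to\mathcal O_C(C)\to 0$ you get $h^0(\mathcal O_X(C))\le 1+h^0(\mathcal O_C(C))$. Restricting your class of $C$ gives
\[
\mathcal O_C(C)\cong\mathcal O_C\bigl(2R+(n-3-\ell)\mathfrak h-\epsilon\bar q\bigr)\cong \mathcal L'_{|C}\otimes\mathcal O_C(-L),
\]
i.e.\ the line bundle of $\mathfrak g$ minus that of $\mathfrak h$. The generality to invoke is that of $\mathcal O_C(R)$ in $\mathrm{Pic}^{n-1}(C)$ (Remark \ref{prop:kop-rm}), or equivalently of $\mathfrak g$. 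As $\mathcal O_C(R)$ varies, $\mathcal O_C(2R)$ sweeps all of $\mathrm{Pic}^{2n-2}(C)$, so $\mathcal O_C(C)$ is a general line bundle of degree $p-1$ and hence has no sections:
\begin{itemize}
\item for $p=0$ it has degree $-1$;
\item for $p=1$ it is a nontrivial degree-$0$ bundle;
\item for $p\ge 2$ it avoids $W_{p-1}$.
\end{itemize}
With this step inserted, your proof is complete and coincides with the paper's.
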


\begin{proof} Let $\varphi: X\longrightarrow \Sigma$ be the morphism determined by $|\mathcal L'|$. Since $|\mathcal L'|$ contains the subsystem consisting of the fixed curve $C$ plus the base point free pencil $|L|$, to see that $\varphi$ is birational onto its image, it suffices to prove that given the general curve $L\in |L|$, $|\mathcal L'|$ cuts out on it a complete linear series of degree 2. To see this, look at the sequence
\begin{equation}\label{eq:retta-generica}
0\longrightarrow \mathcal L'(-L)\longrightarrow \mathcal L'\longrightarrow \mathcal L'_{|L}\longrightarrow 0,
\end{equation}
with $\mathcal L'(-L)=\mathcal O_X(C)$. Then the assertion will be proved if we show that $h^1(\mathcal O_X(C))=0$ or equivalently, by Riemann--Roch theorem, that $h^0(\mathcal O_X(C))=1$. Look at the sequence
$$
0\longrightarrow \mathcal O_X\longrightarrow \mathcal O_X(C)\longrightarrow \mathcal O_{C}(C)\longrightarrow 0.
$$
To prove that $h^0(\mathcal O_X(C))=1$ one has to show that $h^0(\mathcal O_C(C))=0$. 
Note that $C^2=n-4$ and more precisely 
$$
\mathcal O_C(C)\sim(n-3-\ell)\mathfrak h+2R_{|C}-\epsilon\bar q.
$$
Using again the generality of $R_{|C}$, this is a general line bundle of degree $n-4=p-1$ on $C$, and therefore it is not effective, as required. 

The degree of the image surface $\Sigma$ is 
$$
\deg (\Sigma)=(C+L)^2=(n-4)+2\cdot 2=n.
$$
Finally, recalling that
$\mathcal L'_{|C}=\mathfrak g$ is a $g^1_{n-2}$ on $C$ and that $(C+L)\cdot L=C\cdot L=2$, we see that $C$ is mapped by $\varphi$ to a line $r$
 contained in $\Sigma$ with multiplicity $n-2$, 
and the linear system $|L|$ is mapped by $\varphi$ to the linear system of conics in $\Sigma$ cut out by the planes through $r$. 
\end{proof}

\subsection{Nodes} Finally we show that:

\begin{proposition}\label{prop:ojo} The surface $\Sigma$ has $2\ell$ nodes as singularities off the line $r$ of multiplicity $n-2$ that
lie pairwise on $\ell$ disjoint lines contained in $\Sigma$ and meeting $r$ at $\ell$ distinct points. 
These nodes of $\Sigma$ impose independent conditions to the linear system of surfaces of degree $n$ that contain $r$ with multiplicity $n-2$. 

\end{proposition}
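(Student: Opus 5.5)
The plan is to identify the singular points of $\Sigma$ off $r$ as the images of the curves on $X$ contracted by $\varphi$. I will work with total transforms: $H$ is the pull-back of a line, $e_q$ the exceptional class over $q$, $e_i$ and $e'_i$ the classes over $q_i$ and $q'_i$, and $\bar e$ the class over $\bar q$ if $\epsilon=1$. Then
$$\mathcal L'\sim nH-(n-2)e_q-\sum_{i=1}^\ell (e_i+e'_i)-\epsilon\bar e, \qquad L\sim H-e_q.$$
Write $\tilde r_i\sim H-e_q-e_i-e'_i$ for the strict transform of $r_i$; it is well defined since $r_i$ passes through $q,q_i,q'_i$.

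\textbf{Step 1: the nodes and the lines.} A curve $D$ contracted by $\varphi$ satisfies $\mathcal L'\cdot D=0$. Since $\mathcal L'=C+L$ with $|L|$ base point free and $C$ nef on curves other than itself, such a $D$ must lie in a fibre of the conic bundle $|L|$ and be disjoint from $C$. So I first classify the reducible fibres of $|L|$.
\begin{itemize}
\item Over each $r_i$ the fibre is $\tilde r_i+(e_i-e'_i)+2e'_i$.
\item Over the line through $\bar q$ (if $\epsilon=1$) the fibre is $(L-\bar e)+\bar e$, two components of $\mathcal L'$-degree $1$. These map to two lines, so no singularity arises there.
\item By the generality of $(C,\mathfrak h)$ and of $\bar q$, every other fibre is irreducible.
\end{itemize}
A direct computation gives $\tilde r_i^2=(e_i-e'_i)^2=-2$ and $\mathcal L'\cdot\tilde r_i=\mathcal L'\cdot(e_i-e'_i)=0$. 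Also $\tilde r_i\cdot(e_i-e'_i)=0$, so these two curves are disjoint. Each is a smooth rational $(-2)$-curve, so each is contracted to an $A_1$ singularity, i.e. a node of $\Sigma$; this gives exactly $2\ell$ nodes. Moreover $\mathcal L'\cdot e'_i=1$, so $e'_i$ maps to a line $\ell_i\subset\Sigma$. Since $e'_i$ meets both $(-2)$-curves and $C\cdot e'_i=1$, the line $\ell_i$ contains the two nodes and meets $r$. The lines $\ell_i$ are pairwise disjoint off $r$ because the $e'_i$ lie in distinct fibres of $|L|$. Their intersection points with $r$ are distinct because they are the images under the $g^1_{n-2}$ map $C\to r$ of distinct points of $C$; for general $\mathfrak g$ these points have distinct images. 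Finally, $\Sigma$ is smooth off $r$ away from these points: elsewhere $\varphi$ is an isomorphism, as $|\mathcal L'|$ separates points and tangents on each fibre of $|L|$ by the degree-$2$ completeness shown in Proposition \ref{prop:lopg1}.

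\textbf{Step 2: independence.} Choose coordinates with $r=\{x=y=0\}$. The system $\mathcal W$ of degree-$n$ surfaces singular with multiplicity $\ge n-2$ along $r$ is spanned by the monomials $x^ay^bz^cw^d$ with $a+b\ge n-2$. I would argue node by node. The plane $\pi_i=\langle r,\ell_i\rangle$ contains both nodes $N_i,N'_i$ on $\ell_i$. The planes $\pi_1,\dots,\pi_\ell$ through $r$ are distinct, since the points $\ell_i\cap r$ are distinct and the $\ell_i$ are in distinct conics of the pencil. I would aim to produce, for each node, an element of $\mathcal W$ vanishing at all the other nodes but not at that one. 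Note that a single product of linear forms through $r$ cannot do this: by Step 1 the number of planes $\pi_j$ is $\ell$, which may exceed the $n-2$ factors available, and it would also fail to separate $N_i$ from $N'_i$. So I would instead use the structure of $\mathcal W$ restricted to the pencil of planes through $r$. The pencil of planes $\pi_\lambda=\{y=\lambda x\}$ gives
$$\mathcal W\to \bigoplus_{j} H^0(\pi_j,\mathcal O(n)\otimes \mathcal I_r^{n-2}),$$
and a form in $\mathcal W$ restricted to $\pi_j$ is $x^{n-2}Q_j(x,z,w)$ with $Q_j$ a conic. The conics on $\pi_j$ easily separate the two points $N_j,N'_j$, which lie off $r$. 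So it suffices to show that the map $\mathcal W\to\bigoplus_{j=1}^\ell \mathbb C[x,z,w]_2$ is surjective at the level of the values at the $2\ell$ points. Writing an element of $\mathcal W$ as $\sum_{k\ge 0} x^{n-2-k}y^{k}G_k+(\text{terms of higher order in }x,y)$, the restriction to $\pi_\lambda$ is a polynomial in $\lambda$ of degree up to $n$ with conic coefficients. Interpolation in $\lambda$ at $\ell$ points then works provided enough independent $\lambda$-degrees are available for the relevant coefficients. I expect to check this count using $2\ell\le 3n-4$: the values at the two points of $\pi_j$ only need the $z,w$-part of $Q_j$ together with its linear-in-$x$ part, and these come with $\lambda$-degrees up to $n-2$, $n-1$, $n$ respectively, giving at least $3n-3\ge 2\ell$ free parameters.

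The main obstacle is this interpolation count in Step 2, where the bound $2\ell\le 3n-4$ must be used sharply. If the direct count is delicate, I would instead pull back to $X$ and prove $h^1(X,\mathcal L'\otimes\mathcal I)=0$ for the corresponding conditions. To do this I would use the sequences \eqref{eq:lp} and \eqref{eq:retta-generica}, together with the genericity of $\mathcal O_C(R)$ exploited in Proposition \ref{prop:lopg1}.
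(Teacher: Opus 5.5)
Your Step 1 is essentially the paper's argument. The contracted curves are the $(-2)$-curves $\tilde r_i$ and $e_i-e'_i$ in the fibres of $|L|$ over the $r_i$, and the curves $e'_i$ map to the lines carrying the nodes in pairs.

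The gap is Step 2. It is the real content of the independence statement, and you leave it as a heuristic that does not work. Counting free parameters never gives independence of point conditions, because the rank of the evaluation map depends on where the points are. Write $F(1,\lambda,u,v)=G(\lambda;u,v)+L(\lambda;u,v)+c(\lambda)$, with $G,L,c$ of degree $2,1,0$ in $(u,v)$ and of degree at most $n-2,n-1,n$ in $\lambda$. Then:
\begin{itemize}
\item The value at a node in $\pi_j$ involves all three pieces, so your reduction to part of $Q_j$ is not correct.
\item Full interpolation of the conics $Q_j$ already fails once $\ell>n-1$, i.e.\ for every $n\ge 4$.
\item For $n\ge 8$ one has $\ell\ge n+2$. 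Then $\ell$ points, one on each $\pi_j$ and all on a line disjoint from $r$, impose at most $n+1$ conditions on $\mathcal W$, yet your count cannot tell this configuration apart from a good one.
\end{itemize}
So any proof must use the actual position of the nodes, and in $\mathbb P^3$ you have no explicit control over their coordinates.

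Your fallback does not repair this. The relevant system on $X$ is not $\mathcal L'$ but the pull-back $\mathcal M\sim n\mathcal L'-(n-2)C\sim 2\mathcal L'+(n-2)L$ of the surfaces in $\mathcal W$. The sequences for $\mathcal L'$ restricted to $C$ and to the fibres of $|L|$ say nothing about the nodes.

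The missing idea is the paper's. For each contracted $(-2)$-curve $D$ one has $\mathcal M\cdot D=0$ and $C\cap D=\emptyset$. Hence a surface of $\mathcal W$ passes through the node $\varphi(D)$ if and only if the corresponding member of $\mathcal M$ contains $D$. So imposing the $2\ell$ nodes amounts to subtracting all $\tilde r_i$ and $e_i-e'_i$ from $\mathcal M$. Here $\mathcal M$ is the system of plane curves of degree $3n-2$ with a $(3n-6)$-fold point at $q$ and double points at $q_i,q'_i$ (and $\bar q$), and it has dimension $6n-4$. The subtraction leaves
\[
\mathcal P=\Big|(3n-2-\ell)H-(3n-6-\ell)e_q-2\textstyle\sum_i e_i-2\epsilon\bar e\Big|,
\]
and independence is equivalent to $\dim\mathcal P=6n-4-2\ell=3n+\epsilon$.

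With $q_1,\dots,q_\ell,\bar q$ taken general, this says that $\ell+\epsilon$ general double points impose independent conditions on the system mapping $\mathbb F_1$ to the surface $Y$. That is, $Y$ is not $(\ell+\epsilon-1)$-defective, which the paper deduces from the Chiantini--Ciliberto classification of defective surfaces. This step is what turns the special position of the nodes in $\mathbb P^3$ into a general-position statement in the plane.
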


\begin{proof} To prove that $\Sigma$ has $2\ell$ nodes off the line $r$ of multiplicity $n-2$, we will prove that on $X$ there are  $(-2)$--curves $D_1,\ldots, D_{2\ell}$, such that $\mathcal L'\cdot D_i=C\cdot D_i=0$, for $i=1,\ldots, 2\ell$, and $D_i\cdot D_j=0$ for $1\leq i<j\leq 2\ell$. To see this, 
notice that the blow--up of the points $q_1,\ldots, q_\ell$, produces $(-1)$--curves 
$E_1,\ldots, E_\ell$. When we blow--up the infinitely near points $q'_1,\ldots, q'_\ell$, with $q'_i\in E_i$, for $i=1,\ldots, \ell$, introducing new exceptional divisors $F_1,\ldots, F_\ell$, the proper transforms of $E_1,\ldots, E_\ell$ on $X$ become $(-2)$--curves $D_1,\ldots, D_{\ell}$, such that $\mathcal L'\cdot D_i=C\cdot D_i=0$ for $i=1,\ldots, \ell$, and $D_i\cdot D_j=0$ for $1\leq i<j\leq \ell$. 
On the other hand, after subsequently blowing up $q$, the proper transforms $D_{\ell+1},\ldots, D_{2\ell}$ of the lines $r_1,\ldots, r_\ell$ become also $(-2)$--curves, such that $\mathcal L'\cdot D_i=C\cdot D_i=0$ for $i=\ell+1,\ldots, 2\ell$, and $D_j\cdot D_j=0$ for $1\leq i<j\leq 2\ell$. 

To see that the  $2\ell$ nodes of $\Sigma$ obtained by the contraction of the $(-2)$--curves $D_1,\ldots, D_{2\ell}$ are pairwise contained into $\ell$ disjoint lines contained in $\Sigma$ and meeting $r$ at $\ell$ distinct points, observe that the $(-1)$--curves $F_1,\ldots, F_\ell$ are such that $F_i\cdot F_j=0$ if $i\neq j$, $F_i\cdot D_i=F_i\cdot D_{i+l}=1$, $F_i\cdot D_j=0$ for $j\neq i, i+l$ and $F_i\cdot \mathcal L'=C\cdot F_i=1.$ 
Thus their image curves in $\mathbb P^3$ by the morphism defined by $\mathcal L'$ are $\ell$ disjoint lines $s_1,\ldots, s_{\ell}$
with the desired properties.

We observe that, if $\epsilon=1$ the blowing-up at the point $\bar q$ produces two further $(-1)$-curves on $X$, the corresponding exceptional divisor $\bar E$ and the proper transform of the line $\bar r$ passing through $q$ and $\bar q$. One has 
$$
\bar E\cdot \bar r=\mathcal L'\cdot \bar E=\mathcal L'\cdot \bar r=C\cdot \bar E=C\cdot \bar r=1.
$$ 
This implies that the morphism defined by $\mathcal L'$ maps isomorphically both $\bar E$ and $\bar r$ to lines in $\Sigma\subset\mathbb P^3$,
which we denote respectively by $\bar e$ and $\bar s$ . We claim that $\bar e$ and $\bar s$
are two distinct lines, generating a plane $\bar \pi$ passing through the multiple line $r$ of $\Sigma$ and tangent to $\Sigma$ at the point  $\bar e\cap\bar s$,
which is a smooth point for $\Sigma$.
Indeed these two lines have to meet at the image of the intersection point of $\bar E$ and $\bar r$ and they meet the multiple
line $r$ of $\Sigma$ at two distinct points, because, by the generality of $\bar q$, we may assume that the two intersection points of the line $\bar r$
with $C$ off the point $q$ are not in a divisor of $|\mathcal L'|_C|$ (which we know to be a complete general $g^1_{n-2}$ on $C$). 

To show that $\Sigma$ has no further singular points outside the line $r$, we first observe that $\Sigma$ may have only singular points of multiplicity $2$ off $r$.
This is because every plane $\pi$ passing through $r$ intersects $\Sigma$ along $r$, with multiplicity $n-2$, and a conic. If  
$\pi_1,\ldots, \pi_\ell$ are the planes generated respectively by the line $r$ and the line $s_i$, with $i=1,\dots, \ell$, then every $\pi_i$ intersects $\Sigma$
along the line $r$, with multiplicity $n-2$ and the line $s_i$, with multiplicity $2$, with $i=1,\dots, \ell$. 

By Lemma \ref{lem:lopg} and by the exact sequence \eqref{eq:retta-generica}, under our generality assumptions, for any  line $l\in|L|$, different from $r_1,\ldots, r_\ell$, and from $\bar r$ if $\epsilon=1$, the linear system
$|\mathcal L'|_l|$ is a complete, base point free linear system of dimension and degree $2$.
Then the morphism defined by $\mathcal L'$ isomorphically maps $l$  to a smooth conic.
This implies that any plane $\pi$ in $\mathbb P^3$ passing through  $r$
other than $\pi_1,\ldots, \pi_\ell$, and than $\bar\pi$ if $\epsilon=1$, intersects $\Sigma$ outside $r$ along a smooth conic.
This proves that $\Sigma$ is smooth outside $r$ and the $2\ell$ aforementioned nodes.

We are now left to prove the final assertion. For this, we must prove that the $(-2)$--curves $D_1,\ldots, D_{2\ell}$ give $2\ell$ independent conditions to the pull--back $\mathcal M$ on $X$ via $\varphi$ of the linear system of surfaces of degree $n$  in $\mathbb P^3$ that contain $r$ with multiplicity $n-2$. One has
$$
\mathcal M\sim n\mathcal L' - (n-2)C\sim 2\mathcal L'+ (n-2)L
$$
that corresponds to the linear system of plane curves of degree $3n-2$ with the point $q$ of multiplicity $3n-6$, with double points at the points $q_1,\ldots, q_\ell$ and $q'_1,\ldots, q'_\ell$, and a further double point at $\bar q$ if $\epsilon =1$. 

\begin{claim}\label{cl:21} The linear system $\mathcal M$ has dimension $6n-4$.
\end{claim}

\begin{proof}[Proof of Claim \ref {cl:21}] First  we estimate the dimension of the linear system $\mathcal S$ of surfaces of degree $n$ in $\mathbb P^3$ having a line $r$ of multiplicity (at least) $n-2$. Take a general plane $\pi$ containing $r$. The system $\mathcal S$ cuts out on $\pi$, off $r$, a linear system of conics. Hence we can split $\pi$ off $\mathcal S$ with at most $6$ conditions. Repeating this $n-2$ times, the residual system is the system of all quadrics, which has dimension 9. Hence we have
$$
\dim(\mathcal S)\leq 6(n-2)+ 9=6n-3. 
$$
Accordingly, we must have $\dim (\mathcal M)\leq 6n-4$. On the other hand 
$$
\dim(\mathcal M)\geq \frac {(3n-2)(3n+1)}2-\frac {(3n-6)(3n-5)}2-6\ell-3\epsilon=6n-4,
$$
hence $\dim(\mathcal M)=6n-4$, as required.
\end{proof}

Now subtract from $\mathcal M$ the $\ell$ lines $r_1,\ldots, r_\ell$. We are left with a linear system $\mathcal N$ and 
$$
\dim (\mathcal N)\geq \dim(\mathcal M)-\ell=6n-4-\ell.
$$

The linear system $\mathcal N$ corresponds to the linear system of plane curves of degree 
$3n-2-\ell$ with the point $q$ of multiplicity $3n-6-\ell$, with simple points at the points $q_1,\ldots, q_\ell$ and $q'_1,\ldots, q'_\ell$, and a  double point at $\bar q$ if $\epsilon =1$. Now we want to split from $\mathcal N$ also the remaining $(-2)$-curves, and this gives us the linear system $\mathcal P$ that corresponds to the linear system of plane curves of degree  $3n-2-\ell$ with the point $q$ of multiplicity $3n-6-\ell$, with double points at $q_1,\ldots, q_\ell$ and a  double point at $\bar q$ if $\epsilon =1$. We have
\begin{equation}\label{eq:fro}
\dim (\mathcal P)\geq \dim (\mathcal N)-\ell\geq \dim(\mathcal M)-2\ell=6n-4-2\ell=3n+\epsilon.
\end{equation}

By the generality assumptions, we may assume that the points $q_1,\ldots, q_\ell$ and $\bar q$ (if $\epsilon=1)$ are general points in the plane. 

Consider now the surface  $Y\subset \mathbb P^r$, with $r=15n-5\ell-16$, that is the image of the plane via the linear system of curves of degree $3n-2-\ell$ with the point $q$ of multiplicity $3n-6-\ell$. 

\begin{claim}\label{cl:nd} The surface  $Y$ is not $(\ell+\epsilon-1)$--defective\footnote{Recall that a (non--degenerate, irreducible, projective) linearly normal surface $S\subset \mathbb P^r$ is said to be $k$--defective if its $k$--secant variety has dimension smaller than expected, or, equivalently, if imposing $k+1$ general double points to $|\mathcal O_S(1)|$ imposes less than $3(k+1)$ independent conditions.}.
\end{claim}

\begin{proof}[Proof of Claim \ref {cl:nd}] The list of $k$--defective surfaces can be found in  \cite [Thm. 1.3] {CC}, where they are divided in two types (i) and (ii). It is clear that  $Y$ does not belong to case (i) for numerical reasons, so we have to exclude that $Y$ falls in case (ii). A $k$--defective, non--degenerate  surface in $\mathbb P^r$  in case (ii)  is contained  in a $(s + 2)$-dimensional cone $W$ over a curve, with vertex a linear space $\Pi$ of dimension $s \leq k - 1$ with $r \geq 2k + s + 3$. We have to exclude that $Y$ is of this type, with $k=\ell+\epsilon-1$.

The surface  $Y$ can be seen as the image of $\mathbb F_1$ via a linear system that we will soon identify. Let us denote by $E$ the $(-1)$--curve of $\mathbb F_1$ and  by $F$ a curve in the ruling of $\mathbb F_1$ over $\mathbb P^1$. The effective cone of $\mathbb F_1$ is spanned by $E$ and $F$. The linear system that maps $\mathbb F_1$ (isomorphically) to $Y$ is 
$|(3n-2-\ell)F+4E|$. One has
$$
((3n-2-\ell)F+4E)\cdot E=3n-6-\ell, \quad ((3n-2-\ell)F+4E)\cdot F=4
$$
so that the minimum degree of a movable curve on $Y$ is $4$ (and it is achieved by the curves in $|F|$). One has 
$$
\deg( Y  )=(3n-2-\ell)^2-(3n-6-\ell)^2=4(3n-4+\epsilon).
$$

Suppose, by contradiction, that $Y$ is of type (ii) as above. Let $\mathfrak R$ be the (possibly empty) 1--dimensional scheme cut out by the vertex $\Pi$ of the cone $W$ on  $Y$, which has degree $\rho\geq 0$. Take a general hyperplane passing through   $\Pi$. Such a hyperplane intersects $W$ in $\Pi$ plus $\gamma:=\deg(W)$ linear spaces of dimension $s+1$ containing $\Pi$, each of which intersects  $Y$, off $\mathfrak R$, in $a\geq 1$ curves of degree $\delta\geq 4$. Looking at the section of such a hyperplane with $Y$, we see that
$$
4(3n-4+\epsilon)=\deg(X)=a\gamma\delta+\rho\geq 4\gamma, \quad \text{hence}\quad 3n-4+\epsilon\geq \gamma.
$$
On the other hand, one has $\gamma\geq r-s-1$, so that we find  
$$
3n-4+\epsilon\geq r-s-1\geq r-k=15n-6\ell-\epsilon-15\geq 6n+2\epsilon-3
$$
that is clearly a contradiction.  \end{proof}

Claim \ref  {cl:nd} implies that  
$$
\dim (\mathcal P)= \frac{(3n-2-\ell)(3n+1-\ell)}2-\frac{(3n-6-\ell)(3n-5-\ell)}2-3\ell -3\epsilon=3n+\epsilon.
$$
Hence we see that in  \eqref {eq:fro} equalities hold everywhere, and this proves that  the $2\ell$ nodes of $\Sigma$  impose independent conditions to the linear system of surfaces of degree $n$ that contain the line $r$ with multiplicity $n-2$.

\end{proof}

\section{Deformations of surfaces with a  double  line}\label{sec:line}

In this section we will consider Problem \ref {prob:main} for a surface $S$ of degree $n\geq 3$ that has only \emph{ordinary singularities} along a line $R$. 
This means that: (a) the points of $R$ are double points for $S$, (b) at the general point $p\in R$, the surface $S$ has local equation $xy=0$, (c) there are finitely many \emph{pinch points} $p\in R$, where the local equation of $S$ is given by $x^2y-z^2=0$.  With the terminology of \cite{FFP} and related references, the surface $S$ is an example of \emph{semi-smooth surface}. In particular, \color{black} the tangent cone to $S$ at the general point $p\in R$ consists of two distinct planes containing $R$, while the tangent cone to $S$ at a pinch point $p\in R$ consists of a double plane. 

A minimal resolution $f: S'\longrightarrow S$ of $S$ is obtained by blowing--up $R$ in $\mathbb P^3$ and by taking the strict transform $S'$ of $S$.

Again we will use a deformation argument that we are going to describe soon. The situation is similar to the one in Section \ref {ssec:def}, but slightly more complicated. 

\subsection{The ambient of the deformation}\label{ssec:lio}

 Let $\mathcal X'=\mathbb P^3\times \mathbb D\longrightarrow \mathbb D$ be a trivial family parametrized by the disc $\mathbb D=\{t\in\mathbb C|\,|t|<1\}$,
with fibres $\mathcal X'_t\simeq \mathbb P^3$ over $t\in\mathbb D$.

Next we blow-up the line $R\subset\mathcal X'_0$.
Denote by $\mathcal  X\longrightarrow\mathcal X'$  the blowing-up of $\mathcal X'$ along $R\subset\mathcal X'_0$. Then $\mathcal X$ has fibre $\mathcal X_t\simeq\mathbb P^3$ if $t\neq 0$ and central fibre 
 $$\mathcal X_0=A\cup\Theta,$$
 where $A$ is the blowing-up  $p: A\longrightarrow \mathbb P^3$ of $\mathbb  P^3$ along $R$ and $\Theta$ is the exceptional divisor, intersecting transversally
 $A$ along the smooth ruled surface
	$$
E=A\cap \Theta\simeq \mathbb P(\mathcal O_{ R}(1) \oplus \mathcal O_{R}(1))\cong\mathbb F_0,
$$
coinciding on $A$ with the exceptional divisor of $p: A\longrightarrow \mathbb P^3$.
 We  have 
 $$
 \Theta\cong \mathbb P(\mathcal O_{R}(1) \oplus \mathcal O_{R}(1) \oplus\mathcal O_{R})\cong 
 \mathbb P(\mathcal O_{R} \oplus \mathcal O_{R} \oplus\mathcal O_{R}(-1)),
$$
which is a $\mathbb P^2$-bundle on $R$ and we denote by $P_\Theta\cong\mathbb P^2$ the linear equivalence class of a
fibre of the natural morphism $\Theta\longrightarrow R$. 

\begin{lemma}\label{lem:axx} There is a natural isomorphism
$$
\Theta \cong {\rm Bl}_{R^*}({\mathbb P^3}^*),
$$
i.e., $\Theta$ can be seen in a natural way as the blowing-up of ${\mathbb P^3}^*$ along the dual line $R^*$ of $R$, namely  $R^*$ is the pencil of planes containing $R$. 
\end{lemma}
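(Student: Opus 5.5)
The plan is to identify $\Theta$ with the projectivized normal bundle of $R$ in $\mathcal X'$ and to read each point of it as a plane of $\mathbb P^3$ containing $R$ (or a limit of such). Since $\mathcal X'=\mathbb P^3\times\mathbb D$, we have
$$N_{R|\mathcal X'}\cong N_{R|\mathbb P^3}\oplus\mathcal O_R\cong \mathcal O_R(1)^{\oplus 2}\oplus\mathcal O_R,$$
so $\Theta=\mathbb P(N_{R|\mathcal X'})$, consistent with the description given above. A point of $\Theta$ over $x\in R$ is a line $\lambda$ in the $3$--dimensional vector space $N_{R|\mathcal X',x}=T_x\mathcal X'/T_xR$. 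The planes $\Pi\subset\mathbb P^3$ through $x$ are parametrized by hyperplanes of $T_x\mathbb P^3$. This suggests the following dictionary: to $(x,\lambda)$ with $\lambda\not\subset N_{R|\mathbb P^3,x}$ (equivalently, $\lambda$ not in the plane $E_x$), associate the plane through $x$ whose tangent space is the image of the hyperplane of $T_x\mathbb P^3$ "orthogonal" to $\lambda$. That phrasing is only heuristic, so I will make it concrete with coordinates and a global construction instead.

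Concretely, I would choose coordinates $[x_0:x_1:x_2:x_3]$ with $R=\{x_2=x_3=0\}$, and let $t$ be the coordinate on $\mathbb D$. The blow-up of $\mathcal X'$ along $R$ then has exceptional divisor
$$\Theta=\{([x_0:x_1],[a:b:c])\}\subset R\times\mathbb P^2,$$
where $[a:b:c]=[x_2:x_3:t]$ are the normal coordinates, suitably twisted by $\mathcal O(1)$ in the first two entries. I then define a rational map $\Theta\dashrightarrow{\mathbb P^3}^*$ by sending $([x_0:x_1],[a:b:c])$ to the plane
$$u_0x_0+u_1x_1+u_2x_2+u_3x_3=0 \quad\text{with}\quad (u_0,u_1,u_2,u_3)=(bx_1\cdot?\,,\dots).$$
Rather than guess the formula, the cleaner route is this. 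Fibrewise over $x\in R$, $\mathbb P(N_x)\cong\mathbb P^2$ should map isomorphically onto the plane $\{x\}^*\subset{\mathbb P^3}^*$ of planes through $x$. All of these planes $x^*$ contain the line $R^*$, and two distinct ones meet exactly in $R^*$. The line $E_x=\mathbb P(N_{R|\mathbb P^3,x})$ should map to $R^*$. The key step is therefore a natural isomorphism of bundles
$$\mathbb P(N_{R|\mathcal X'})\cong \mathbb P(\mathcal K),$$
where $\mathcal K\subset V^*\otimes\mathcal O_R$ (with $V=H^0(\mathcal O_{\mathbb P^3}(1))^*$) is the rank $3$ subbundle of linear forms vanishing at $x$. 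Its fibre is $\mathcal K_x=H^0(\mathcal I_x(1))$, which gives $\mathcal K\cong\mathcal O_R^{\oplus 2}\oplus\mathcal O_R(-1)$: forms vanishing on $R$ contribute $\mathcal O^2$, and the rest is $\Omega$-type, i.e.\ $\mathcal O(-1)$ on $R\cong\mathbb P^1$. This matches the second description $\mathbb P(\mathcal O\oplus\mathcal O\oplus\mathcal O(-1))$, so the abstract isomorphism already holds by the splitting types. The incidence variety $\{(x,\Pi):x\in\Pi\}\subset R\times{\mathbb P^3}^*$ is exactly $\mathbb P(\mathcal K)$, and its projection to ${\mathbb P^3}^*$ is the blow-up of ${\mathbb P^3}^*$ along $R^*$. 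Indeed, a plane not containing $R$ meets $R$ in exactly one point, so the fibre over it is a single point, while the fibre over a plane containing $R$ is all of $R$. Smoothness of the incidence variety, together with the fact that the exceptional locus $R\times R^*$ is a Cartier divisor, gives the identification with ${\rm Bl}_{R^*}$ via the universal property.

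The main obstacle is making the isomorphism natural, i.e.\ compatible with the geometry used later, such as $E$ mapping to $R\times R^*\cong\mathbb F_0$ and the fibres $P_\Theta$ mapping to the planes $x^*$. I would settle this by the explicit coordinate check: on the chart where $t\ne0$ in the normal direction, send $(x,[a:b:c])$ to the plane spanned by $R$-independent data, and verify that $E=\{c=0\}$ goes onto the exceptional divisor $R\times R^*$, using the identification of the normal direction $t$ with the trivial summand $\mathcal O_R$ and the twist $\mathcal O_R(1)^2\otimes\mathcal O(-1)\to\mathcal O^2$.
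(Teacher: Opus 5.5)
Your argument is correct, but it does not follow the paper's proof. The paper cites \cite[Prop.~9.11]{EH} and then argues intrinsically on $\Theta$. It shows that $E$ does not move in $\Theta$, and that $|P_\Theta+E|$ is a base point free web restricting to $|\mathcal O_P(1)|$ on each fibre $P$ and to $|{\bf r}_E|$ on $E$. It then takes $\phi_{|P_\Theta+E|}$ as the birational morphism to ${\mathbb P^3}^*$ that contracts $E$ along $|{\bf r}_E|$ onto a line identified with $R^*$.

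You argue extrinsically instead: you model ${\rm Bl}_{R^*}({\mathbb P^3}^*)$ as the incidence variety $\mathbb P(\mathcal K)\subset R\times{\mathbb P^3}^*$ and match splitting types. This is in effect a direct proof of the fact the paper quotes from \cite[Prop.~9.11]{EH}. You should say explicitly that $0\to\mathcal O_R^{\oplus 2}\to\mathcal K\to\mathcal O_R(-1)\to 0$ splits because $H^1(\mathcal O_R(1))=0$. Your route is shorter and shows the blow-up structure explicitly. The paper's route gives directly the class $H_\Theta\sim P_\Theta+E$ used in \eqref{eq:hki} and Lemma \ref{lem:lkp}, without choosing a bundle isomorphism.

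Two points need tightening.

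First, the universal property only gives a morphism from the incidence variety to the blow-up, not an isomorphism. It is simpler to write the equation. For $R=\{x_2=x_3=0\}$ the incidence variety is $\{u_0x_0+u_1x_1=0\}\subset R\times{\mathbb P^3}^*$, which is the standard equation of the blow-up along $R^*=\{u_0=u_1=0\}$. In the chart $x_0=1$ one has $u_0=-x_1u_1$. So the plane $x^*=\{a_0u_0+a_1u_1=0\}$ pulls back to $u_1(a_1-a_0x_1)=0$, i.e. to $E+P_x$. This recovers $H_\Theta\sim P_\Theta+E$.

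Second, the ``main obstacle'' you flag needs no coordinate check.
\begin{itemize}
\item Since ${\rm Hom}(\mathcal O_R,\mathcal O_R(-1))=0$, the summand $\mathcal O_R^{\oplus 2}$ is exactly the subsheaf generated by global sections.
\item Hence any isomorphism $N_{R|\mathcal X'}\otimes\mathcal O_R(-1)\cong\mathcal K$ maps $N_{R|\mathbb P^3}\otimes\mathcal O_R(-1)$ onto $H^0(\mathcal I_R(1))\otimes\mathcal O_R$.
\item So $E$ goes onto $R\times R^*$, and the fibre of $\Theta$ over $x$ goes onto the strict transform of $x^*$.
\item The constant $GL_2$ on that summand can be chosen to induce the natural identification $\mathbb P(N_{R|\mathbb P^3,x})\cong R^*$.
\end{itemize}

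Do not aim for more than this. Take $y=(x_0,x_1)$, $z=(x_2,x_3)$ and $B\neq 0$. The projectivities $(y,z)\mapsto(y+Bz,z)$ preserve $R=\{z=0\}$ and act trivially on $N_{R|\mathbb P^3}$, hence on $\Theta$, but they act nontrivially on ${\mathbb P^3}^*$. So no isomorphism equivariant for the automorphisms of $(\mathbb P^3,R)$ exists. ``Natural'' can only mean the compatibilities above, and that is also all the paper's proof provides.
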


\begin{proof} This follows from  \cite[Prop. 9.11]{EH}. However, we need to make the isomorphism $\Theta \cong {\rm Bl}_{R^*}({\mathbb P^3}^*)$ very explicit. 

\begin{claim}\label{cl:1}  The divisor $E$ does not move on $\Theta$.\end{claim}

    \begin{proof}[Proof of  Claim \ref  {cl:1}] Consider the obvious relation
$$
(A+\Theta)\cdot A\cdot\Theta=A^2\cdot \Theta+A\cdot\Theta^2= 0,
$$
where
$A^2\cdot \Theta=c_1(\mathcal N_{E|\Theta})$ and $A\cdot\Theta^2=c_1(\mathcal N_{E|A}):= {\bf e}$. 
 Denote by ${\bf f}_E$ and ${\bf r}_E$ 
the linear equivalence classes, respectively, of a fibre  and a section of $E\cong \mathbb P^1\times \mathbb P^1$ as a $\mathbb P^1$-bundle over $R\simeq \mathbb P^1$,  generating  $\rm{Pic}(E)$ and such that  ${\bf r}_E^2={\bf f}_E^2=0$.

By using that $K_A\sim p^*(K_{\mathbb P^3})+E$ and by the adjunction formula 
$\omega_E\simeq\omega_A\otimes\mathcal N_{E|A}$
one obtains that

$$
\mathcal N_{E|A}\sim \omega_E \otimes 	\omega_{{A}|_E}^{-1}\sim \omega_E\otimes ( p^*(\mathcal O_{\mathbb P^3}(4))
\otimes \mathcal O_A(-E) ) |_E.
$$
Since $\mathcal O_A(E)|_E={\bf e}$, we have
$$
2{\bf e}\sim -2{\bf f}_E-2{\bf r}_E+4{\bf f}_E=2{\bf f}_E-2{\bf r}_E,
$$
hence
$$
c_1(\mathcal N_{E|A})={\bf e}\sim {\bf f}_E-{\bf r}_E\sim -\,c_1(\mathcal N_{E|\Theta}).
$$
This proves that   
$\mathcal N_{E|\Theta}$ is non-effective, as claimed.\end{proof}

\begin{claim}\label{cl:11} 
The linear system $|\mathcal O_{\Theta}(P_\Theta+E)|$ has dimension $3$ and its restriction to 
a plane $P\in |P_\Theta|$ coincides with the complete linear system $|\mathcal O_P(1)|$. Moreover, one has that 
$P_\Theta\cdot(P_\Theta + E)^2=1$, $(P_\Theta + E)|_E\sim {\bf r}_E$ and $E\cdot(P_\Theta + E)^2=0$. \end{claim}

\begin{proof}[Proof of Claim \ref {cl:11}] 

We first prove that $\mathcal O_{P}(P_\Theta+E)\simeq\mathcal O_{P}(E)\simeq \mathcal O_{P}(1).$
By the fact that $\Theta$ is rational, one has $h^1(\mathcal O_\Theta)=h^2(\mathcal O_\Theta)=0$. By the exact sequence
$$
0\longrightarrow \mathcal O_\Theta \longrightarrow \mathcal O_\Theta(P_\Theta)\longrightarrow \mathcal O_{P}(P_\Theta)\cong \mathcal O_{P}\longrightarrow 0,
$$
one sees that $h^1(\mathcal O_\Theta(P_\Theta))=0$. Twisting by $\mathcal O_\Theta(-P_\Theta)$, one also obtains $h^i(\mathcal O_\Theta(-P_\Theta))=0,$
for $0\leq i\leq 3.$ 

Now, a fibre $P\sim P_\Theta$ of $\Theta$ cuts a fibre ${\bf f}_E\cong \mathbb P^1$ on $E$. Thus $P\cap E$ is a smooth rational plane curve. In order to see that
$\mathcal O_{P}(E)\simeq \mathcal O_{P}(1)$,  we prove that $h^0(\mathcal O_{P}(E))=3$.   
Consider the  exact sequences
\begin{equation}\label{eq: 1}
0\longrightarrow \mathcal O_\Theta (E-P_\Theta)\longrightarrow \mathcal O_\Theta(E)\longrightarrow \mathcal O_{P}(E)\longrightarrow 0,
\end{equation}
\begin{equation}\label{eq: 2}
0\longrightarrow \mathcal O_\Theta \longrightarrow \mathcal O_\Theta(E)\longrightarrow \mathcal O_{E}(E)\cong \mathcal O_E({\bf r}_E-{\bf f}_E)\longrightarrow 0,
\end{equation}
and
\begin{equation}\label{eq: 3}
0\longrightarrow \mathcal O_E({\bf r}_E-{\bf f}_E) \longrightarrow \mathcal O_E({\bf r}_E)\longrightarrow \mathcal O_{{\bf f}_E}({\bf r}_E)\cong \mathcal O_{\mathbb P^1}(1)\longrightarrow 0.
\end{equation}
Using \eqref{eq: 3}, 
one obtains $h^1( \mathcal O_E({\bf r}_E-{\bf f}_E))=0$. By \eqref{eq: 2} and by $h^1(\mathcal O_\Theta)=0$, one has 
$h^1( \mathcal O_\Theta(E))=0$. It follows by \eqref{eq: 1} that  
$$
h^0(\mathcal O_{P}(E))=h^0(\mathcal O_{\Theta}(E))+h^1(\mathcal O_\Theta (E-P_\Theta)).
$$
By the exact sequence
$$
0\longrightarrow \mathcal O_\Theta(-P_\Theta) \longrightarrow \mathcal O_\Theta(E-P_\Theta)\longrightarrow \mathcal O_{E}(E-P_\Theta)\cong \mathcal O_E({\bf r}_E-2{\bf f}_E)\longrightarrow 0,
$$
using that 
$h^i(\mathcal O_\Theta(-P_\Theta))=0$, for $0\leq i\leq 3,$  one obtains that
$$
h^1(\mathcal O_\Theta (E-P_\Theta))=h^1(\mathcal O_E (E-P_\Theta))=h^1(\mathcal O_E({\bf r}_E-2{\bf f}_E)).
$$
By Serre duality, $h^1(\mathcal O_E({\bf r}_E-2{\bf f}_E))=h^1(\mathcal O_E(-3{\bf r}_E))=2.$
Thus $h^0(\mathcal O_{P}(E))=3$, hence $\mathcal O_{P}(E)\simeq \mathcal O_{P}(1)$. 
Twisting \eqref{eq: 1} by $\mathcal O_\Theta(P_\Theta)$, one obtains our first statement. 
The equality $P_\Theta\cdot(P_\Theta + E)^2=1$ follows. The relation $(P_\Theta + E)|_E\sim {\bf r}_E$ is trivial, hence $E\cdot(P_\Theta + E)^2=0$,
 as claimed.
 \end{proof}

Now we can finish the proof of the lemma. First  notice that   the linear system $|P_\Theta+E|$ has no base points. Indeed, since it contains the linear system $|P_\Theta|+E$, the base points of $|P_\Theta+E|$ may occur only on $E$. But, by 
the exact sequence
$$
0\longrightarrow \mathcal O_\Theta (P_\Theta)\longrightarrow \mathcal O_\Theta(P_\Theta+E)\longrightarrow \mathcal O_E(P_\Theta+E)\cong \mathcal O_E({\bf r}_E)\longrightarrow 0,
$$
since, as we saw, $h^1(\mathcal O_\Theta (P_\Theta))=0$, $|P_\Theta+E|$ cuts out on $E$ the complete linear system $|{\bf r}_E|$ that is base point free.

From Claim \ref {cl:11} it follows that
$$
\phi_{|P_\Theta+E|}:\Theta\longrightarrow{\mathbb P^3}^*
$$
is a morphism mapping $\Theta$ birationally onto ${\mathbb P^3}^*$. The planes in $|P_\Theta|$ are mapped to planes in the pencil that has as base locus a line $r$ where $E$ is contracted. The line $r$ can be naturally identified with $R^*$. Indeed, $\phi_{|P_\Theta+E|}$ contracts $E\cong \mathbb F_0$ along the ruling $|{\bf  r}_E|$. Hence the image of $E$ can be identified with a fibre ${\bf  f}_E$, which in turn can be identified with $R^*$. \end{proof}

\subsection{Deformation of a surface with a double line}\label{ssec:kip}

Let now $S\subset\mathcal X'_0$ be a degree $n\geq 3$ surface in $\mathbb P^3$ with only ordinary singularities along a line $R$. We  denote by $H$ the Cartier divisor on $\mathcal X$, which is the pull-back of a general plane of $\mathbb P^3$ with respect to the projection morphism $\mathcal X\longrightarrow\mathbb P^3$.  The strict transform $S_A$ of  $S\subset\mathcal X'_0$ on $\mathcal X$ sits on the component $A$ of $\mathcal X_0$, it coincides with the minimal desingularization of $S$ and it belongs to the linear system $|\mathcal O_A(nH-2E)|$.
In particular, since $S$ has only ordinary singularities along $R$, the surface  $S_A$ cuts on $E$ a smooth curve $C$. 

\begin{lemma}\label{lem:sit} The smooth curve $C$ on $E$ is linearly equivalent to 
$$C\sim n{\bf f}_E+2c_1(\mathcal N_{E|\Theta})\sim 2{\bf r}_E+(n-2){\bf f}_E$$
hence it has genus $n-3$. The pencil $|{\bf f}_E|$ cuts out a $g^1_2$ on $C$ whose $2n-4$ ramification points map to the pinch points of $S$ on $R$. 
\end{lemma}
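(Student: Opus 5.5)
Since $S_A\in|\mathcal O_A(nH-2E)|$, the curve $C=S_A\cap E$ has class $(nH-2E)|_E$ in $\mathrm{Pic}(E)$, and I would compute that restriction directly. The pull-back $H|_E$ of a general plane of $\mathbb P^3$ meets $R$ in one point, so its restriction to $E$ is a single fibre, $H|_E\sim{\bf f}_E$. For the exceptional divisor, $\mathcal O_A(E)|_E=c_1(\mathcal N_{E|A})={\bf e}\sim {\bf f}_E-{\bf r}_E$, by the proof of Claim \ref{cl:1}. In the same proof we saw $c_1(\mathcal N_{E|\Theta})\sim -{\bf e}$, so $-2E|_E = 2c_1(\mathcal N_{E|\Theta})$. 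This gives
$$C\sim n{\bf f}_E-2({\bf f}_E-{\bf r}_E)=2{\bf r}_E+(n-2){\bf f}_E.$$
Equivalently, one can use the Cartier relation $\mathcal O_{\mathcal X}(A+\Theta)|_{\mathcal X_0}$ trivial, so $-E|_A$ and $\Theta|_E$ agree on $E$; this yields the first expression in the statement.

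The genus follows from adjunction on $E\cong\mathbb F_0$, where $K_E\sim-2{\bf r}_E-2{\bf f}_E$. We get $2g-2=C\cdot(C+K_E)=(2{\bf r}_E+(n-2){\bf f}_E)\cdot((n-4){\bf f}_E)=2(n-4)$, so $g=n-3$. The curve is smooth by hypothesis, since the singularities of $S$ along $R$ are ordinary.

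Since $C\cdot{\bf f}_E=2$, the fibres of $E\to R$ cut a $g^1_2$ on $C$. This is the map $C\to R$, whose fibre over $x\in R$ consists of the two tangent directions of $S$ at $x$, that is, the two branch planes through $R$. By Riemann--Hurwitz this double cover has $2g+2=2n-4$ ramification points. It remains to identify them with the pinch points, which is a local computation. At a general point the local equation is $xy=0$ with $R=\{x=y=0\}$, and the strict transform meets the fibre of $E$ in two distinct points $[1:0]$ and $[0:1]$, so there is no ramification. At a pinch point $x^2y-z^2=0$ with $R=\{x=z=0\}$, we use the chart $z=xu$, which gives $y=u^2$. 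Then $C$ is $\{y=u^2\}$ in coordinates $(y,u)$ on $E$, and the projection to $R$ (coordinate $y$) is ramified exactly at $u=0$, i.e. over the pinch point. Hence the ramification points of the $g^1_2$ are exactly the points of $C$ over pinch points, and $S$ has $2n-4$ of them.

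The main obstacle is keeping the sign conventions for ${\bf e}$ straight. I would double-check them using $C\cdot{\bf f}_E=2$ together with the known genus count of the double cover.
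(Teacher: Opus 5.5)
Your proposal is correct and follows essentially the paper's route. The paper restricts $\mathcal O_{\mathcal X}(nH-2\Theta)|_\Theta\sim nP_\Theta+2E$ to $E$, while you restrict $\mathcal O_A(nH-2E)$ from the $A$ side; the two agree because $\mathcal O_{\mathcal X}(A+\Theta)$ is trivial, as you note. Your adjunction computation of the genus and your local blow-up checks at $xy=0$ and $x^2y-z^2=0$ correctly fill in what the paper calls obvious. The one point left implicit, both by you and by the paper, is that $C$ is connected, which is needed for the genus statement and for Riemann--Hurwitz. It holds because $C\sim 2{\bf r}_E+(n-2){\bf f}_E$ is ample on $E\cong\mathbb F_0$ for $n\geq 3$.
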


\begin{proof} We consider the line bundle $\mathcal O_\mathcal X(nH-2\Theta)$ whose restriction to the general fibre $\mathcal X_t\cong \mathbb P^3$, for $t\neq 0$, is $\mathcal O_{\mathbb P^3}(n)$. It restricts to $A$ as $\mathcal O_A(nH-2E)$.
One has  
$$( nH-2\Theta)|_\Theta\sim ( nH+2A)|_\Theta\sim n P_\Theta+2 E$$
and therefore
$S_A\cdot E\sim n{\bf f}_E+2c_1(\mathcal N_{E|\Theta})\sim 2{\bf r}_E+(n-2){\bf f}_E$. The rest of the assertion is obvious. \end{proof}

 Let now $S_0:=S_A\cup S_\Theta\in |\mathcal O_{\mathcal X_0} (nH-2\Theta)|$, be  a Cartier divisor with 
$S_\Theta\subset \Theta$. We want to understand what the surface $S_\Theta\subset \Theta$ can be, recalling that $\Theta$ can be seen as the blow--up of ${\mathbb P^3}^*$ along $R^*$ via the morphism determined by the linear system $|P_\Theta+E|$. 

Set  $H_\Theta\sim P_\Theta+E$. Since 
\begin{equation}\label{eq:hki}
( nH-2\Theta)|_\Theta\sim n P_\Theta+2 E\sim n H_\Theta-(n-2)E
\end{equation}
the linear system $|( nH-2\Theta)|_\Theta|$ is sent by $\phi_{| P_\Theta+E|}$ to the linear system $\mathfrak L$  in ${\mathbb P^3}^*$ of surfaces of degree $n$ containing $R^*$ with multiplicity $n-2$. So $S_\Theta$ can be regarded as the normalization of a surface of degree $n$ in ${\mathbb P^3}^*$ with points of multiplicity $n-2$ along $R^*$, with the constraint that $S_\Theta$ cuts on $E$ the same curve $C$ that $S_A$ cuts on $E$, which, as we saw in Lemma \ref {lem:sit}, is a smooth curve in the linear system $|2{\bf r}_E+(n-2){\bf f}_E|$. 

In what follows we will be interested in deformations of a surface of the type of $S_0$ in $\mathcal X$. If $S_\Theta$ is smooth, such a deformation can be interpreted as a total smoothing of $S$ in $\mathbb P^3$ (see \cite [Prop. 3.2] {CG}).

\subsection{The family of surfaces with a double line} \label{ssec:fam}

In this section we prove the following:

\begin{proposition}\label{prop:fam} The family $\mathcal F_n$ of surfaces of degree $n$ in $\mathbb P^3$ that are singular along a line is irreducible, of dimension 

$$
f_n={{n+2} \choose 3}+\frac {n(n-3)}2+3.
$$

The general surface in $\mathcal F_n$ has ordinary singularities along a line and no other singularity. 
\end{proposition}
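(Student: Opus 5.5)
The plan is to compute the dimension via an incidence correspondence with the Grassmannian of lines $\mathbb G(1,3)$, which has dimension $4$. Consider
$$\mathcal J=\{([S],R)\in |\mathcal O_{\mathbb P^3}(n)|\times \mathbb G(1,3): S \text{ is singular along } R\}.$$
The fibre of the projection $\mathcal J\to\mathbb G(1,3)$ over $R$ is the linear system $\mathcal S_2(R)$ of degree $n$ surfaces double along $R$. This is a projective space, so $\mathcal J$ is irreducible as soon as the fibre dimension is constant. By homogeneity under $PGL(4)$ it is constant. The image $\mathcal F_n$ is then irreducible.

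To compute $\dim \mathcal S_2(R)$, choose coordinates with $R=\{x_0=x_1=0\}$. A degree $n$ form is singular along $R$ iff it lies in $(x_0,x_1)^2$, i.e.\ it has no monomials of $x_0,x_1$-degree $\le 1$. The count of monomials of degree $n$ with $x_0,x_1$-degree $0$ is $n+1$, and with $x_0,x_1$-degree $1$ it is $2n$. Hence
$$\dim\mathcal S_2(R)={{n+3}\choose 3}-1-(3n+1).$$
This matches the splitting argument used in Claim \ref{cl:21}: the residual counts conics, then quadrics.

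Next I would check that the general $S\in\mathcal S_2(R)$ is singular along exactly one line. Then $\mathcal J\to\mathcal F_n$ is generically finite (in fact injective), and
$$f_n={{n+3}\choose 3}-3n-2+4.$$
A routine identity check confirms that this equals ${{n+2}\choose 3}+\frac{n(n-3)}2+3$, since ${{n+3}\choose3}-{{n+2}\choose3}={{n+2}\choose2}$.

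For the final assertion, write $F=x_0^2A+x_0x_1B+x_1^2C$ with $A,B,C$ general of degree $n-2$. By Bertini applied to the linear system $\mathcal S_2(R)$, whose base locus is exactly $R$, the general member is smooth off $R$. Along $R$, the tangent cone at a point of $R$ is the binary quadric $A x_0^2+Bx_0x_1+Cx_1^2$, with coefficients restricted to $R$. It is degenerate exactly at the $2n-4$ zeros of the discriminant $B^2-4AC$ on $R$. For general $A,B,C$ these zeros are simple, giving normal crossings elsewhere and no triple points. At a simple zero, the local analysis of $x^2a+xyb+y^2c$ with $b^2-4ac$ vanishing simply along $R$ gives, after completing the square, $u^2-tv^2$ with $t$ a local coordinate on $R$. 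This is exactly the pinch point $x^2y-z^2$. In particular no other line is singular, which also confirms generic injectivity.

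The main obstacle is this last local normal form at the points where the discriminant vanishes. One must verify that simplicity of the zero of $B^2-4AC$ (achievable by choosing $A,B,C$ generic, e.g.\ $A=C$ constant-like and $B$ with simple roots of $B^2-4AC$) yields precisely a Whitney umbrella rather than a worse singularity. I would do this by a direct coordinate change, completing the square in $x_0$ over the local ring.
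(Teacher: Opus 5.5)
Your proposal is correct and is essentially the paper's argument: fix the line, show that the linear system of degree $n$ surfaces singular along it has dimension $\binom{n+3}{3}-3n-2=s_n$, and add $\dim\mathbb G(1,3)=4$. Your monomial count, generic-injectivity check and discriminant/pinch-point analysis fill in exactly what the paper leaves to the reader; two slips are harmless: the comparison with Claim~\ref{cl:21} does not fit (that claim concerns multiplicity $n-2$, not $2$), and after completing the square the third local coordinate should be the discriminant function itself, which has nonzero derivative along $R$, rather than the original parameter $t$ on $R$.
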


\begin{proof} Let $r\subset \mathbb P^3$ be a line, and let $\mathcal S_n$ be the linear system of surfaces of degree $n$ in $\mathbb P^3$ that are singular along $r$. It is easy to see, and can be left to the reader, that this linear system has dimension

$$
s_n={{n+2} \choose 3}+\frac {n(n-3)}2-1.
$$

Then the first assertion easily follows and the final assertion is clear. \end{proof}

\subsection{Nodal deformations of surfaces with a double line}  In view of Problem \ref {prob:main}, we are not interested in a total smoothing of $S$, but in deformations that are nodal, with possibly the maximum possible number of nodes. To obtain such a deformation, we will need to impose to $S_\Theta$ to have nodes off $E$, and possibly the maximum possible number of nodes.  In order to be sure that we can preserve these nodes in a deformation off the central fiber, we should be able to apply \cite [Thm. 4.6]{CG}, so
it will be necessary to know that the nodes of $S_\Theta$ impose independent conditions to the linear system of surfaces $|( nH-2\Theta)|_{\mathcal X_0}|$.

Recall that $\Theta$ can be seen as the blow--up of $\phi:=\phi_{|P_\Theta+E|}: \Theta \longrightarrow {\mathbb P^3}^*$ along $r:=R^*$ via the morphism determined by the linear system $|P_\Theta+E|$. As we saw in Section \ref {ssec:kip}, the image $\Sigma$ of $S_\Theta$ via the map $\phi$ is a surface of degree $n$ in $ {\mathbb P^3}^*$ that contains the line $r$ with multiplicity $n-2$. 
Suppose that $S_\Theta$ contains $\delta$ nodes. Then $\Sigma$ contains $\delta$ nodes off the line $r$.

\begin{lemma}\label{lem:lkp} Suppose that $S_\Theta$ contains $\delta$ nodes. Then they impose independent conditions to the linear system of surfaces $|( nH-2\Theta)|_{\mathcal X_0}|$ if and only if the $\delta$ nodes of $\Sigma$ impose independent conditions to the linear system of surfaces of degree $n$ in 
$ {\mathbb P^3}^*$ passing through the line $r$ with multiplicity $n-2$. 
\end{lemma}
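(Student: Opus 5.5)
We reduce independence of conditions on $\mathcal X_0$ to independence on $\Theta$, and then transport the latter to ${\mathbb P^3}^*$ through $\phi$. Write $\mathcal O_{\mathcal X_0}(nH-2\Theta)=:\mathcal L_0$ and let $N$ be the set of the $\delta$ nodes of $S_\Theta$, which lie in $\Theta\setminus E$. Since $A$ and $\Theta$ meet transversally along $E$, we use the standard exact sequence
$$
0\longrightarrow \mathcal L_0|_A(-E)\longrightarrow \mathcal L_0\longrightarrow \mathcal L_0|_\Theta\longrightarrow 0,
$$
and twist it by $\mathcal I_N$; as $N\cap A=\emptyset$ the kernel is unaffected. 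Here $\mathcal L_0|_A(-E)=\mathcal O_A(nH-3E)$, which corresponds to surfaces of degree $n$ in $\mathbb P^3$ vanishing to order $3$ along $R$.

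The first step is to check $h^1(A,\mathcal O_A(nH-3E))=0$. One could compute $R^ip_*\mathcal O_A(-3E)$ on the blow-up, which gives $p_*=\mathcal I_R^3$ and $R^1p_*=0$ since $\mathcal O_E(-kE)$ has no higher direct images along the $\mathbb P^1$-fibres for $k\geq 0$. Then one needs $h^1(\mathbb P^3,\mathcal I_R^3(n))=0$, which follows from the fact that $\mathcal I_R^3$ is resolved by sums of $\mathcal O(-3-j)$; more concretely, forms of degree $n$ surject onto $H^0(\mathcal O_{\mathbb P^3}/\mathcal I_R^3(n))$ because $R$ is a linear subspace. If $n<3$ the statement is vacuous, since $n\geq 3$ is assumed. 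Granting this vanishing, the long exact sequence gives a surjection
$$H^0(\mathcal X_0,\mathcal L_0)\to H^0(\Theta,\mathcal L_0|_\Theta),$$
and $H^1(\mathcal L_0\otimes\mathcal I_N)\cong H^1(\mathcal L_0|_\Theta\otimes \mathcal I_N)$ modulo $H^1(\mathcal L_0)\cong H^1(\mathcal L_0|_\Theta)$. Hence $N$ imposes independent conditions to $|\mathcal L_0|$ if and only if it imposes independent conditions to $|\mathcal L_0|_\Theta|$; we can phrase this via the surjectivity of the evaluation maps at $N$, so that no assumption on $h^1(\mathcal L_0)$ is needed.

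Next, by \eqref{eq:hki}, $\mathcal L_0|_\Theta\cong\mathcal O_\Theta(nH_\Theta-(n-2)E)$. Since $\phi:\Theta\to{\mathbb P^3}^*$ is the blow-up along $r$ (Lemma \ref{lem:axx}), we have $\phi_*\mathcal O_\Theta(nH_\Theta-(n-2)E)=\mathcal I_r^{n-2}(n)$. Thus $H^0(\Theta,\mathcal L_0|_\Theta)$ is identified with the linear system of degree $n$ surfaces with multiplicity $\geq n-2$ along $r$. The map $\phi$ is an isomorphism off $E$, and $N$ lies off $E$, so evaluation at $N$ corresponds to evaluation at $\phi(N)$, the nodes of $\Sigma$. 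This yields the equivalence.

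The main obstacle is the vanishing $h^1(\mathcal O_A(nH-3E))=0$. Equivalently, we need surjectivity of the restriction to $\Theta$, which is what lets us lift sections with prescribed values from $\Theta$ to $\mathcal X_0$. I would verify it by the explicit filtration $\mathcal I_R^{k}/\mathcal I_R^{k+1}\cong \mathrm{Sym}^k(\mathcal O_R(-1)^2)$ together with the vanishing of $H^1(\mathcal O_R(n-k))$ for $k\leq n+1$, plus $h^1(\mathcal O_{\mathbb P^3}(n))=0$.
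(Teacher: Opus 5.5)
Your argument is correct and follows the same route as the paper, which simply calls the lemma an immediate consequence of \eqref{eq:hki}: identify $\mathcal L_0|_\Theta$ with $\mathcal O_\Theta(nH_\Theta-(n-2)E)$, then push forward along the blow-down $\phi$, which is an isomorphism near the nodes. Your useful addition is the reduction from $\mathcal X_0$ to $\Theta$ via $h^1(A,\mathcal O_A(nH-3E))=h^1(\mathbb P^3,\mathcal I_R^3(n))=0$, i.e.\ surjectivity of $H^0(\mathcal X_0,\mathcal L_0)\to H^0(\Theta,\mathcal L_0|_\Theta)$; the paper leaves this implicit, but it is exactly what the ``if'' direction used in Theorem \ref{thm:m3} needs.
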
 

\begin{proof} This is an immediate consequence of \eqref {eq:hki}. \end{proof}

\begin{theorem}\label{thm:m3} Let $S\subset \mathbb P^3$ be a general surface of degree $n\geq 3$ that has only {ordinary singularities} along a line $R$. Then $S$ belongs to the closure of a regular component of $V^{\mathbb P^3, |\mathcal O_{\mathbb P^3}(n)|}_\delta$, with $\delta=3n-4-\epsilon$, where $0\leq \epsilon\leq 1$ has the same parity of $n$. In particular, $\mathcal F_n$ is contained in $V^{\mathbb P^3, |\mathcal O_{\mathbb P^3}(n)|}_\delta$. 
\end{theorem}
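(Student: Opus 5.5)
The plan is to apply the degeneration of Section \ref{ssec:lio}. We find a surface $S_\Theta\subset\Theta$ in $|(nH-2\Theta)|_\Theta|$ that cuts on $E$ exactly the curve $C=S_A\cap E$ of Lemma \ref{lem:sit} and has $\delta=2\ell=3n-4-\epsilon$ nodes off $E$. These nodes must impose independent conditions to $|\mathcal O_{\mathcal X_0}(nH-2\Theta)|$. Given such an $S_\Theta$, \cite[Thm. 4.6]{CG} (as in Proposition \ref{prop:frt}) deforms $S_0=S_A\cup S_\Theta$ to $\delta$-nodal surfaces of degree $n$ in $\mathcal X_t\cong\mathbb P^3$ lying in a regular component. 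Since $\mathcal F_n$ is irreducible (Proposition \ref{prop:fam}) and regular components have closed closures, it suffices to do this for $S$ general in $\mathcal F_n$. Note that $3n-4=2\ell+\epsilon$ forces $\epsilon\equiv n \pmod 2$.

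\textbf{Step 1 (the candidate $S_\Theta$).} By Lemma \ref{lem:axx} and \eqref{eq:hki}, a surface $S_\Theta$ is the strict transform under $\phi:\Theta\to{\mathbb P^3}^*$ of a degree $n$ surface $\Sigma$ with $r=R^*$ of multiplicity $n-2$. We take $\Sigma$ to be the rational surface built in Section \ref{sec:rat}. Its strict transform meets $E$ in the curve which, on $X$, is the curve $C$: a smooth curve in $|2{\bf r}_E+(n-2){\bf f}_E|$ of genus $n-3$, with the $g^1_2$ given by $|{\bf f}_E|$ (the conic pencil through $r$, i.e.\ $|L|$), and with the ${\bf r}_E$-direction giving the $g^1_{n-2}$ $\mathfrak g$. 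For $S$ general, the pair $(C,\mathfrak h)$ is general in $\mathcal M^1_{p,2}$. Moreover the $g^1_{n-2}$ cut by $|{\bf r}_E|$ should be a general complete $g^1_{n-2}$. By Proposition \ref{prop:kop}, Remark \ref{prop:kop-rm} and the freedom in $\mathfrak g$ from Section \ref{ssec:constr}, we can then choose $\Gamma$, the points $q_i,q'_i,\bar q$ and $\mathcal O_C(R)$ so that the resulting embedded curve $\phi(S_\Theta)\cap$(exceptional locus) coincides with $C\subset E$ after an automorphism of $\Theta$ fixing $E$. The $\ell$ chosen ramification points correspond to $\ell$ pinch points of $S$.

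\textbf{Step 2 (nodes and independence).} Proposition \ref{prop:ojo} gives $2\ell$ nodes of $\Sigma$ off $r$, hence of $S_\Theta$ off $E$. These nodes impose independent conditions to degree $n$ surfaces with $r$ of multiplicity $n-2$. By Lemma \ref{lem:lkp} they then impose independent conditions on $|(nH-2\Theta)|_\Theta|$. We must pass from $\Theta$ to $\mathcal X_0$. Sections on $\mathcal X_0$ are pairs agreeing on $E$. Since $S_A$ is smooth and has no nodes, it suffices that the restriction to $\Theta$ of $|\mathcal O_{\mathcal X_0}(nH-2\Theta)|$ contains all sections of $\mathcal O_\Theta(nH_\Theta-(n-2)E)$ vanishing on $E$. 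This holds because any such section glues with the zero section on $A$. Independence on that subsystem (surfaces containing $E$) follows from independence on the full system twisted down. More carefully, we show that $h^1$ of the ideal of the nodes in $\mathcal O_\Theta(nP_\Theta+E)$ vanishes, by the same splitting argument as in Claim \ref{cl:21}, or we argue directly through the sequence restricting to $E$.

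\textbf{Main obstacle.} The hard step is Step 1: matching the curve that $S_\Theta$ cuts on $E$ with the given curve $C$ coming from $S$, including its embedding in $E\cong\mathbb F_0$, i.e.\ both the $g^1_2$ and the $g^1_{n-2}$. The first fix to try is a parameter count. Surfaces in $\mathcal F_n$ induce curves in $|2{\bf r}_E+(n-2){\bf f}_E|$ dominating that linear system, because the restriction map from surfaces singular along $R$ to $H^0(E,\mathcal O_E(C))$ is surjective; this can be checked via a vanishing $h^1(\mathcal O_A(nH-3E))=0$. So it suffices to show that the curves cut on $E$ by the rational surfaces $\Sigma$ also fill a dense subset of that linear system. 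This should follow from the generality of $(C,\mathfrak h)\in\mathcal M^1_{p,2}$, from the generality of $\mathfrak g\in{\rm Pic}^{n-2}(C)$, and from the action of ${\rm Aut}(\Theta,E)$.
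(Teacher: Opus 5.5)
Your plan matches the paper's proof: the same degeneration $\mathcal X_0=A\cup\Theta$, gluing $S_A$ along $C$ to the strict transform in $\Theta$ of the rational surface $\Sigma$ of Section~\ref{sec:rat}, independence of its $2\ell$ nodes via Proposition~\ref{prop:ojo} and Lemma~\ref{lem:lkp}, and then \cite[Thm.~4.6]{CG}; your dominance count for matching $C\subset E$ (surjectivity of restriction from $A$, together with the freedom in $(C,\mathfrak h)$, in $\mathfrak g$, and in automorphisms of $\Theta$ preserving $E$) spells out what the paper leaves implicit in its appeal to Proposition~\ref{prop:kop} and Section~\ref{ssec:constr}. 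One point to tighten is Step~2: independence on a system does not give independence on a subsystem, so reducing to sections vanishing on $E$ would need a new statement (surfaces of degree $n$ with multiplicity $n-1$ along $r$) that Proposition~\ref{prop:ojo} does not supply, whereas the vanishing you already quote, $h^1(A,\mathcal O_A(nH-3E))=h^1(\mathbb P^3,\mathcal I_R^3(n))=0$, makes $H^0(\mathcal X_0,\mathcal O_{\mathcal X_0}(nH-2\Theta))\to H^0(\Theta,\mathcal O_\Theta(nP_\Theta+2E))$ surjective, so independence on $\Theta$ passes directly to $\mathcal X_0$.
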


\begin{proof} Let us consider the situation described in Sections \ref {ssec:lio} and  \ref {ssec:kip}. In $\mathcal X_0$ we have the surface $S_A$ that we can identify as the minimal resolution $f: S'\longrightarrow S$. Recall that $S_A$ cuts on $E=A\cap \Theta$ the smooth curve $C$. By Proposition \ref {prop:kop}, by the construction in Section \ref {ssec:constr}, by Propositions \ref {prop:lopg1} and \ref {prop:ojo}, there is a surface $\Sigma$ of degree $n$ in $\mathbb P^3$ with mulptiplicity $n-2$ along a line $r$ and with $2\ell$ nodes (with $3n-4=2\ell+\epsilon$ and  $0\leq \epsilon\leq 1$), such that, if $S''\longrightarrow \Sigma$ is the partial minimal desingularization of $\Sigma$ along $r$, then the pull--back of $r$ on $S''$ is the curve $C$. Then we can glue $S_A$ with $S_\Theta=S''$ along $C$, thus obtaining a Cartier divisor $S_0=S_A\cup S_\Theta\subset \mathcal X_0$. Now, taking into account Lemma \ref {lem:lkp} and the fact that the $2\ell$ nodes of $\Sigma$ impose independent conditions to the linear system of surfaces of degree $n$ that contain $r$ with multiplicity $n-2$ (see Proposition \ref {prop:ojo}), we can apply \cite [Thm. 4.6]{CG} and conclude the proof of the theorem.\end{proof} 

The following remarks are in order.

\begin{remark}\label{rem:limit} The proof of Theorem \ref {thm:m3} shows that $S$, as in the statement of the theorem,  can be deformed to a $\delta$--nodal surface (with $\delta=3n-4-\epsilon=2\ell$, where $0\leq \epsilon\leq 1$) in such a way that $\ell$ (arbitrary chosen) pinch points of $S$ deform each to a pair of nodes.   This follows by the particular configurations of the nodes of the surface $S_\Theta$ (cf. Proposition \ref{prop:ojo} and Lemma \ref{lem:axx}).

\end{remark}

\begin{remark}\label{rem:dec} The codimension of the family $\mathcal F_n$ in $|\mathcal O_{\mathbb P^3}(n)|$ is
$$
{{n+3}\choose 3}-1-f_n=3n-3.
$$
We proved that there is a regular component $V$ of $V^{\mathbb P^3, |\mathcal O_{\mathbb P^3}(n)|}_\delta$ contaning $\mathcal F_n$, and therefore $V$ has codimension  $\delta=3n-4-\epsilon$. So, if $n$ is even, so that $\epsilon=0$, we see that $V$ has dimension just one more than $\mathcal F_n$. It follows that $3n-4$ is the maximum $\delta$ such that $\mathcal F_n$ is contained in a \emph{regular component} of $V^{\mathbb P^3, |\mathcal O_{\mathbb P^3}(n)|}_{\delta}$. In the case $n$ odd we have  $\delta=3n-5$, thus  $V$ has dimension  2 more than $\mathcal F_n$ so that there is the possibility that $\mathcal F_n$ is contained in a regular component of  $V^{\mathbb P^3, |\mathcal O_{\mathbb P^3}(n)|}_{3n-4}$. This is an interesting question to be investigated. 
\end{remark}

{}
\end{document}